
\documentclass[11pt]{article}       
\usepackage{geometry}               
\geometry{letterpaper}          
\geometry{margin=1in}

\usepackage{graphicx}
\usepackage{caption}
\usepackage{subcaption}
\usepackage{comment}
\usepackage{amsmath} 
\usepackage{amssymb}  
\usepackage{amsthm}  
\usepackage{bm}
\usepackage{lipsum}
\usepackage{color}

\newtheorem{proposition}{Proposition}
\newtheorem{definition}{Definition}

\newtheorem{lemma}{Lemma}
\newtheorem{theorem}{Theorem}
\newtheorem{remark}{Remark}
\newtheorem{assumption}{Assumption}

\numberwithin{equation}{section}

\DeclareMathOperator*{\argmin}{arg\,min}

\title{Wasserstein Distributionally Robust Stochastic Control:\\ A Data-Driven Approach} 

\author{
 Insoon Yang\thanks{Department of Electrical and Computer Engineering, Automation and Systems Research Institute,  Seoul National University ({insoonyang@snu.ac.kr}). Supported in part by NSF under ECCS-1708906 and CNS-1657100,  Research Resettlement Fund for the new faculty of Seoul National University (SNU), 
 the  Creative-Pioneering Researchers Program through SNU, the Basic Research Lab Program through the National Research Foundation of Korea funded by the MSIT(2018R1A4A1059976), and Samsung Electronics.}
}

\date{}

\begin{document}
\maketitle

\pagestyle{myheadings}
\thispagestyle{plain}

\begin{abstract}
Standard stochastic control methods assume that the probability distribution of uncertain variables  is available. 
Unfortunately, in practice, obtaining accurate distribution information is a challenging task.  
To resolve this issue, we investigate the problem of designing a control policy that is robust against errors in the empirical distribution obtained from data.
This problem can be formulated as a two-player zero-sum dynamic game  problem, where the action space of the adversarial player  is a Wasserstein ball centered at the empirical distribution. We propose computationally tractable value and policy iteration algorithms with explicit estimates of the number of iterations required for constructing an $\epsilon$-optimal policy.
We show that the contraction property of associated Bellman operators extends a single-stage \emph{out-of-sample performance guarantee}, obtained using a measure concentration inequality, to the corresponding multi-stage guarantee without any degradation in the confidence level.
In addition, we characterize an explicit form of the optimal distributionally robust control policy and the worst-case distribution policy
for linear-quadratic problems with Wasserstein penalty.
Our study indicates that dynamic programming and Kantorovich duality play a critical role in solving and analyzing the Wasserstein distributionally robust stochastic control problems.
\end{abstract}


\section{Introduction}

The theory of stochastic optimal control is based on the assumption that the probability distribution of uncertain variables (e.g., disturbances) is fully known. However, this assumption is often restrictive in practice, because estimating an accurate distribution requires large-scale high-resolution sensor measurements over a long training period or multiple periods.
Situations in which uncertain variables are not directly observed are much more challenging; computational methods, such as filtering or statistical learning techniques, are often used to obtain the (posterior) distribution of the uncertain variables given limited observations. The accuracy of the obtained distribution is often unsatisfactory, as it is subject to the quality of the collected data, computational methods, and prior knowledge regarding the variables. If poor distributional information is employed in constructing a stochastic optimal controller, it does not guarantee optimality and can even cause catastrophic system behaviors~(e.g., \cite{Nilim2005, Samuelson2017}).

To overcome this issue of limited distribution information in stochastic control, we investigate a \emph{distributionally robust control} approach.
This emerging minimax stochastic control method minimizes a cost function of interest, assuming that the distribution of uncertain variables is not completely known, but is contained in a pre-specified \emph{ambiguity set} of probability distributions. 
In this paper, we model the ambiguity set as a statistical ball centered at an empirical distribution with a radius measured by  the \emph{Wasserstein metric}.
This modeling approach provides a straightforward means to incorporate data samples into distributionally robust control problems.
Our focus is to show that the resulting stochastic control problems have  
several salient features
in terms of computational tractability and out-of-sample performance guarantee.

Due to its superior statistical properties, 
the Wasserstein ambiguity set has recently received a great deal of attention in distributionally robust optimization~(e.g., \cite{Esfahani2015, Zhao2018, Gao2016, Blanchet2018}), learning~(e.g., \cite{Sinha2018, Chen2018}) and filtering~\cite{Shafieezadeh2018}. 
Specifically, the Wasserstein ball contains both continuous and discrete distributions while statistical balls with the $\phi$-divergence such as the Kullback-Leibler divergence centered at a discrete empirical distribution is not sufficiently rich to contain relevant continuous distributions. 
Furthermore, the Wasserstein metric addresses the closeness between two points in the support, unlike the $\phi$-divergence.
Due to the incapability of the $\phi$-divergence in terms of taking into account the distance between two support elements, the associated ambiguity set may contain irrelevant distributions~\cite{Gao2016}. 
For these reasons, we chose the Wasserstein metric to handle distribution ambiguity, 
although several other types of ambiguity sets have been proposed in the context of single-stage optimization
by using moment constraints~(e.g., \cite{Popescu2007, Delage2010, Zymler2013}), confidence sets~(e.g., \cite{Wiesemann2014}), and the $\phi$-divergences~(e.g., \cite{BenTal2013, Jiang2016}).

\subsection{Related Work}

Distributionally robust sequential decision-making problems have been studied in the context of finite Markov decision processes (MDPs) and continuous-state stochastic control.
In the finite MDP setting, 
dynamic programming (DP) approaches have been proposed~\cite{Xu2012, Yu2016, Yang2017lcss}. 
In~\cite{Xu2012}, moment-based ambiguity sets are used to impose constraints on the moments of distributions, such as mean and covariance.
This approach is further extended to  
handle more types of constraints, such as confidence sets and mean absolute deviation~\cite{Yu2016}, 
by using the lifting technique given in \cite{Wiesemann2014}.
Distributionally robust MDPs with Wasserstein balls are studied in~\cite{Yang2017lcss}, which provides computationally tractable reformulations and useful analytical properties. 

Continuous-state distributionally robust control problems can be considered as a class of minimax stochastic control on Borel spaces~\cite{Gonzalez2003}.
In the case of linear dynamics and quadratic cost functions, 
\cite{VanParys2016} focuses on linear policies and proposes tractable semidefinite program formulation when moment constraints are imposed.
A DP method is also proposed for moment-based ambiguity sets and applied to probabilistic safety specification problems~\cite{Yang2018}.
On the other hand, \cite{Tzortzis2019} uses a total variation ball to model distribution ambiguity and proposes a modified version of the classical policy iteration algorithm. 
Furthermore, a Riccati equation-based approach is also developed in the linear-quadratic regulator setting with the total variation ambiguity set~\cite{Tzortzis2016} and the relative entropy constraint~\cite{Petersen2000}.

%
%
%
%
%
%
%
%
%

\subsection{Contributions}

Departing from the aforementioned control approaches that indirectly use data samples, we consider \emph{continuous-state} distributionally robust control problems with Wasserstein ambiguity sets and develop a dynamic programming method to solve and analyze problems by directly using the data.
The following is a summary of the main contributions of this work.
First, we propose  computationally tractable
value and policy iteration algorithms 
with explicit estimates of the number of iterations necessary for obtaining an $\epsilon$-optimal policy. 
The original Bellman equation involves an infinite-dimensional minimax optimization problem, where the inner maximization problem is over probability measures in the Wasserstein ball. 
To alleviate the computational issue without sacrificing optimality, we 
reformulate Bellman operators by
using modern DRO based on Kantorovich duality~\cite{Esfahani2015, Gao2016}.
Second, we show that the resulting distributionally robust policy $\pi^\star$ has a probabilistic \emph{out-of-sample performance guarantee} by using the contraction property of associated Bellman operators and a measure concentration inequality.
In other words, when $\pi^\star$ is used,
 a probabilistic bound holds on the closed-loop performance evaluated
under a new set of samples that are selected independently of the training data. 
We observe that the contraction property of the Bellman operator seamlessly connects a single-stage performance guarantee to its multi-stage counterpart in a manner that is independent of the number of stages. 
Third, we consider a Wasserstein penalty problem and derive an explicit expression of the optimal control policy and the worst-case distribution policy, along with a Riccati-type equation in the linear-quadratic setting.
We also show that the resulting control policy converges to the optimal policy of the corresponding linear-quadratic-Gaussian (LQG) problem as the penalty parameter tends to $+\infty$.
The performance and utility of the proposed method are demonstrated through an investment-consumption problem and a power system frequency control problem.

This paper is significantly extended from its preliminary version~\cite{Yang2017cdc}, which models distribution ambiguity by using confidence sets.
Specifically, we consider Wasserstein ambiguity sets and investigate new salient features of the corresponding distributionally robust control framework such as $(i)$ a characterization of the worst-case distribution policy, $(ii)$ an out-of-sample performance guarantee, and $(iii)$ an explicit expression of the solution to linear-quadratic problems.

%
%

\subsection{Organization}

In Section~\ref{sec:setting}, we define optimal distributionally robust policies under ambiguous uncertainty and formulate the corresponding distributionally robust stochastic control problem as a dynamic game. 
In Section~\ref{sec:sol}, we develop a tractable semi-infinite program formulation of the Bellman equation and characterize one of the worst-case distribution policies by using Kantorovich duality.
In Section~\ref{sec:perf},
we examine
a probabilistic out-of-sample performance guarantee of the distributionally robust policy.
In Section~\ref{sec:pen},
we present the Wasserstein penalty problem and its explicit solution obtained from a Riccati-type solution. 
Finally, in Section~\ref{sec:exp}, we provide the results of our numerical experiments.

\subsection{Notation}

Given a Borel space $X$, we denote $\mathcal{P}(X)$ by the set of Borel probability measures on $X$. 
In addition, $\mathbb{B}_\xi (X)$  denotes the Banach space of measurable functions $v$ on $X$ with a finite  weighted sup-norm, i.e., 
$\| v \|_\xi := \sup_{\bm{x} \in X} (| v(\bm{x}) | / \xi (\bm{x}) ) < \infty$ given a measurable weight function $\xi : X \to \mathbb{R}$.
Let $\mathbb{B}_{lsc}(X)$ be the set of lower semicontinuous functions in $\mathbb{B}_\xi (X)$.

\section{Distributionally Robust Control of Stochastic Systems}
\label{sec:setting}

\subsection{Ambiguity in Stochastic Systems}

Consider a discrete-time stochastic system of the form
\begin{equation}
x_{t+1} = f (x_t, u_t, w_t),
\end{equation}
where $x_t \in \mathcal{X} \subseteq \mathbb{R}^n$ and $u_t \in \mathcal{U} \subseteq \mathbb{R}^m$ denote the system state and control input, respectively.  Here,  $w_t \in \mathcal{W} \subseteq \mathbb{R}^l$ is a  random disturbance. The probability distribution of $w_t$ is denoted by $\mu_t$. 
However, in practice,  the probability distribution is not fully known 
and is difficult to estimate accurately.
We assume that $\mathcal{X}$, $\mathcal{U}$ and $\mathcal{W}$
are Borel subsets of $\mathbb{R}^n$, $\mathbb{R}^m$ and $\mathbb{R}^l$, respectively.

Suppose that $w_t$'s are i.i.d. and that  we have  access to the sample $\{\hat{w}^{(1)}, \ldots, \hat{w}^{(N)}\}$ of $w_t$. 
One of the most straightforward approaches is to use the sample average approximation (SAA) method and solve the corresponding optimal control problem with the empirical distribution. 
This SAA-control problem can be formulated as
\begin{equation}\label{saa_opt}
\mbox{\small (SAA-control)} \; \inf_{\pi \in \Pi} \; \mathbb{E}^{\pi}_{w_t \sim \nu_N} \bigg [ \sum_{t=0}^\infty \alpha^t c(x_t, u_t) \mid x_0 = \bm{x} \bigg ],
\end{equation}
where 
$\nu_N$ denotes the empirical distribution constructed from the $N$-samples:
\begin{equation}\label{emp_dist}
\nu_N := \frac{1}{N} \sum_{i=1}^N \delta_{\hat{w}^{(i)}}
\end{equation}
with the Dirac delta measure $\delta_{\hat{w}^{(i)}}$ concentrated at $\hat{w}^{(i)}$.
Here, $\alpha \in (0,1)$ is a discount factor, $c: \mathcal{X} \times \mathcal{U} \to \mathbb{R}$ is a stage-wise cost function of interest,
and $\mathbb{E}_{w_t \sim \nu_N}^\pi$ denotes the expected value taken with respect to the probability measure induced by the control policy $\pi$ and the empirical distribution $\nu$. 
As the number  of samples, $N$, tends to infinity, the empirical distribution $\nu$ well approximates the true distribution $\mu$; thus, an optimal policy of the SAA-control problem presents a near-optimal performance.

Unfortunately, it takes a long simulation period or multiple episodes to obtain a large number of samples. Furthermore, in practice, it is likely that the sample data do not reflect the true distribution due to inaccurate sensor measurements or data corruption by malicious attackers (e.g., hackers). 
To resolve these issues in data-driven stochastic control, we propose an optimization method to construct a policy that is robust against errors in the empirical distribution~\eqref{emp_dist}.
More specifically, our policy minimizes the \emph{worst-case} total cost that is calculated under a probability distribution contained in a given set $\mathcal{D} \subset \mathcal{P}(\mathcal{W})$, which is called the \emph{ambiguity set} of probability distributions. 
The ambiguity set can be designed to adequately characterize errors in the empirical distribution.

\subsection{Distributionally Robust Policy} 

To formulate a concrete distributionally robust control problem, we consider a \emph{Markov (or stochastic) game with complete information} (e.g.,~\cite{Kuenle2000, Gonzalez2003}), which is a class of two-player zero-sum dynamic games:   Player~I (controller) determines a policy to minimize the total cost while   Player~II (adversary) selects the disturbance distribution $\mu_t$ of $w_t$  from  the ambiguity set $\mathcal{D}$ to maximize the same cost value.
Let $H_t$ be the set of \emph{histories} up to stage $t$, whose element is of the form $h_t := (x_0, u_0, \cdots, x_{t-1}, u_{t-1}, x_t)$.\footnote{All the results in this paper are valid with histories of the form $\tilde{h}_t := (x_0, u_0, w_0, \mu_0, \cdots, x_{t-1}, u_{t-1}, w_{t-1}, \mu_{t-1}, x_t)$ that also contains Player II's actions $(\mu_0, \cdots, \mu_{t-1})$; that is because under Assumption~\ref{ass_sc}, without loss of optimality, it suffices to focus on  stationary policies that depend only on current state information. We intentionally use the reduced version of histories, as
the realized distributions may not be observable in practice. }
The set of admissible control strategies (for Player I) is given by
$\Pi := \{ \pi := (\pi_0, \pi_1, \ldots) \: | \: \pi_t (\mathcal{U}(x_t) | h_t) = 1 \; \forall h_t \in H_t\}$, where $\pi_t$ is a stochastic kernel from $H_t$ to $\mathbb{R}^m$ and $\mathcal{U}(x_t) \subseteq \mathcal{U}$ is the set of admissible control actions (given that the system state is $x_t$ at stage $t$).
Similarly, the set of Player II's admissible strategies is defined by $\Gamma := \{\gamma := (\gamma_0, \gamma_1, \ldots ) \: | \: \gamma_t (\mathcal{D} | h_t^e) = 1 \; \forall h_t^e \in H_t^e\}$, where 
 $H_t^e$ is the set of \emph{extended histories} up to stage $t$, whose element is of the form $h_t^e := (x_0, u_0,  \mu_0, \cdots, x_{t-1}, u_{t-1},  \mu_{t-1}, x_t, u_t)$ and $\gamma_t$ is a stochastic kernel from $H_t$ to $\mathcal{P}(\mathcal{W})$.
Note that 
the ambiguity set $\mathcal{D}$ is the action space of Player II. 
Here, we allow Player II can change the distribution of $w_t$ over time. 
Thus, the strategy space for Player II is larger than necessary, and this gives an advantage to the adversary.
However, later we will show that an optimal policy of Player II is stationary under some assumption (see Proposition~\ref{prop:wd}).  

We consider the following infinite-horizon discounted cost function:
\begin{equation} \label{dr_opt}
\begin{split}
J_{\bm{x}}(\pi, \gamma) := \mathbb{E}^{\pi, \gamma} \bigg [
\sum_{t=0}^{\infty} \alpha^t c (x_t, u_t) \mid x_0 = \bm{x}
\bigg ],
\end{split}
\end{equation}
where  
 $\mathbb{E}^{\pi, \gamma}$ denotes expectation with respect to the probability measure induced by the strategy pair $(\pi, \gamma) \in \Pi \times \Gamma$.
 
Before defining a concrete stochastic  control problem, 
we impose the following standard assumption for measurable selection in semicontinuous models~\cite{Gonzalez2003}:
 \begin{assumption}\label{ass_sc}
Let $\mathbb{K} := \{  
(\bm{x}, \bm{u}) \in \mathcal{X} \times \mathcal{U} \mid
\bm{u} \in \mathcal{U}(\bm{x})
\}$.
\begin{enumerate}
\item The function $c$ is lower semicontinuous on $\mathbb{K}$, and 
\[
| c(\bm{x}, \bm{u}) | \leq b \xi (\bm{x}) \quad \forall (\bm{x}, \bm{u}) \in \mathbb{K},
\] 
for some constant $b \geq 0$ and continuous  function $\xi: \mathcal{X} \to [1, \infty)$ such that ${\xi}' (\bm{x}, \bm{u}) := \int_{\mathcal{W}} \xi(f(\bm{x}, \bm{u}, w))   \bm{\mu} (\mathrm{d}w)$ is continuous on $\mathbb{K}$ for any $\bm{\mu} \in \mathcal{D}$.
In addition, there exists a constant $\beta \in [1, 1/\alpha)$ such that
 ${\xi}'(\bm{x}, \bm{u}) \leq \beta \xi (\bm{x})$ for all $(\bm{x}, \bm{u}) \in \mathbb{K}$;

\item  For each continuous bounded function $\chi: \mathcal{X} \to \mathbb{R}$, the function ${\chi}' (\bm{x}, \bm{u}) := \int_{\mathcal{W}} \chi (f(\bm{x}, \bm{u}, w)) \bm{\mu} (\mathrm{d} w)$ is continuous on $\mathbb{K}$ for any $\bm{\mu} \in \mathcal{D}$;

\item The set $\mathcal{U}(\bm{x})$ is compact for every $\bm{x} \in \mathcal{X}$, and the set-valued mapping $\bm{x} \mapsto \mathcal{U}(\bm{x})$ is upper semicontinuous.
\end{enumerate}
\end{assumption}
The first condition trivially holds when $c$ is bounded. In fact, $\xi$ is a weight function introduced to relax the boundedness assumption. 
Assumption~\ref{ass_sc} ensures the existence of an optimal policy $\pi^\star$, which is deterministic and stationary, of a minimax control problem with the cost function~\eqref{dr_opt}~\cite[Theorem 4.1]{Gonzalez2003}.
Furthermore, the corresponding optimal value function lies in $\mathbb{B}_{lsc}(\mathcal{X})$ as discussed later.

We now define the \emph{optimal distributionally robust policies} as follows:
\begin{definition}
A control policy $\pi^\star \in \Pi$ is said to be an \emph{optimal distributionally robust policy} if it satisfies
\begin{equation} \label{eq:defn}
\sup_{\gamma \in \Gamma} \; J_{\bm{x}} (\pi^\star, \gamma) \leq \sup_{\gamma' \in \Gamma} \; J_{\bm{x}} (\pi, \gamma') \quad \forall \pi \in \Pi.
\end{equation}
\end{definition}
In words, an optimal distributionally robust policy achieves the minimal cost under the most adverse policies that select disturbance distributions in the ambiguity set $\mathcal{D}$.
Such a desirable policy can be obtained by solving the following problem:
\begin{equation}\label{dr_opt}
\begin{split}
\mbox{(DR-control)} \quad \inf_{\pi \in \Pi} \sup_{\gamma \in \Gamma} \; J_{\bm{x}}(\pi, \gamma),
\end{split}
\end{equation}
which we call the distributionally robust control (DR-control) problem.
The existence of an optimal policy under Assumption~\ref{ass_sc} will be  formalized in Theorem~\ref{thm:ds} in Section~\ref{sec:opt}.

The most important part of this formulation is the inner maximization problem over all disturbance distribution policies in $\Gamma$, which encodes distributional uncertainty through $\mathcal{D}$.
An optimal policy $\pi^\star$ has a performance guarantee in the form of an upper-bound, $\sup_{\gamma \in \Gamma} J_{\bm{x}} (\pi^\star, \gamma)$, if the ambiguity set is sufficiently large to contain the true distribution.
This performance guarantee may not be valid when a different control policy is used, as shown in~\eqref{eq:defn}.


\subsection{Wasserstein Ambiguity Set}

To complete the formulation of the DR-control problem, we consider a specific class of ambiguity sets using the Wasserstein metric.
Let $\mathcal{D}$ be a statistical ball centered at the empirical distribution $\nu_N$ defined by \eqref{emp_dist} with radius $\theta > 0$:
\begin{equation}\label{ball}
\mathcal{D} := \{
{\mu} \in \mathcal{P}(\mathcal{W}) \mid W_p({\mu}, \nu_N ) \leq \theta
\}.
\end{equation}
Here, the distance between the two probability distributions is measured by the Wasserstein metric of order $p \in [1, \infty)$,
\begin{equation}\label{wasserstein}
\begin{split}
W_p({\mu}, \nu_N) := \min_{\kappa \in \mathcal{P}(\mathcal{W}^2)}
\bigg\{
&\bigg [\int_{\mathcal{W}^2} d(w,  w')^p \: \kappa (\mathrm{d}w, \mathrm{d}w')\bigg ]^{\frac{1}{p}}  \mid \Pi^1 \kappa = {\mu}, \Pi^2 \kappa = \nu_N
\bigg\},
\end{split}
\end{equation}
where $d$ is a metric on $\mathcal{W}$, and $\Pi^i \kappa$ denotes the $i$th marginal of $\kappa$ for $i=1, 2$.
The Wasserstein distance between two probability distributions represents the minimum cost of transporting or redistributing mass from one to another via non-uniform perturbation, and the optimization variable $\kappa$ can be interpreted as a transport plan.

%
%
%
%

The minimization problem to identify an optimal transport plan $\kappa$  in \eqref{wasserstein} is called the \emph{Monge-Kantorovich problem}. 
The minimum of this problem can be found by solving the following dual problem:
\[
W_p(\mu, \nu_N)^p = \sup_{\varphi, \psi \in \Phi} \bigg [
\int_{\mathcal{W}} \varphi (w) \:  \mu (\mathrm{d} w)  + 
\int_{\mathcal{W}} \psi (w') \:\nu_N (\mathrm{d} w') 
\bigg ],
\]
where $\Phi := \{ 
(\varphi, \psi) \in L^1 (\mathrm{d} \mu) \times L^1(\mathrm{d} \nu_N) \mid
\varphi (w) + \psi (w') \leq d(w, w')^p \; \forall w, w' \in \mathcal{W}
\}$.
This equivalence is known as the \emph{Kantorovich duality principle}.
Then, the Wasserstein ball \eqref{wasserstein} can be expressed as follows:
\begin{lemma}\label{lem:ball}
The Wasserstein ambiguity set defined by \eqref{ball} is equivalent to
\begin{equation} \nonumber
\begin{split}
{\mathcal{D}} &= \bigg \{ {\mu} \in \mathcal{P} (\mathcal{W}) \mid 
\int_{\mathcal{W}} \varphi (w) \:  {\mu} (\mathrm{d} w) \: + \frac{1}{N}\sum_{i=1}^N \inf_{w \in \mathcal{W}} [ d(w, \hat{w}^{(i)})^p - \varphi (w) ]   \leq \theta^p \:\: \forall \varphi \in L^1(\mathrm{d}{\mu})
\bigg \}.
\end{split}
\end{equation}
\end{lemma}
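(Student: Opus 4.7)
The plan is to derive the equivalent description of $\mathcal{D}$ directly from the Kantorovich duality formula for $W_p(\mu,\nu_N)^p$ that is stated just above the lemma. The key observation is that for a fixed admissible first potential $\varphi$, the largest admissible second potential $\psi$ in $\Phi$ is the $c$-transform
\[
\psi^\varphi(w') := \inf_{w \in \mathcal{W}}\bigl[\,d(w,w')^p - \varphi(w)\,\bigr],
\]
since $\psi^\varphi$ saturates the pointwise constraint $\varphi(w)+\psi(w') \le d(w,w')^p$ and any other feasible $\psi$ satisfies $\psi \le \psi^\varphi$ pointwise. Because $\int \psi \, \mathrm{d}\nu_N$ is monotone in $\psi$, replacing $\psi$ by $\psi^\varphi$ can only increase the dual objective.

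Next, I would simplify $\int \psi^\varphi \, \mathrm{d}\nu_N$ using the explicit form of the empirical measure $\nu_N = \tfrac{1}{N}\sum_{i=1}^N \delta_{\hat{w}^{(i)}}$, which turns the integral into the finite sum
\[
\int_{\mathcal{W}} \psi^\varphi(w')\,\nu_N(\mathrm{d}w') \;=\; \frac{1}{N}\sum_{i=1}^N \inf_{w \in \mathcal{W}}\bigl[\,d(w,\hat{w}^{(i)})^p - \varphi(w)\,\bigr].
\]
Substituting back into the Kantorovich dual gives
\[
W_p(\mu,\nu_N)^p \;=\; \sup_{\varphi \in L^1(\mathrm{d}\mu)} \bigg\{ \int_{\mathcal{W}} \varphi(w)\,\mu(\mathrm{d}w) + \frac{1}{N}\sum_{i=1}^N \inf_{w \in \mathcal{W}}\bigl[\,d(w,\hat{w}^{(i)})^p - \varphi(w)\,\bigr] \bigg\}.
\]
From here, the inequality $W_p(\mu,\nu_N) \le \theta$ is equivalent to the requirement that the bracketed expression is bounded above by $\theta^p$ for every admissible $\varphi$, which is exactly the semi-infinite characterization claimed in the lemma. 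The two inclusions then follow immediately: if $\mu$ lies in the original ball, the dual formula forces every $\varphi$ to satisfy the inequality, and conversely if every $\varphi$ satisfies the inequality, then the supremum in the dual formula is at most $\theta^p$.

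The main subtlety I expect is bookkeeping on the feasible class of test functions: one must argue that restricting the sup to $\varphi \in L^1(\mathrm{d}\mu)$ (dropping any explicit integrability requirement on $\psi^\varphi$ with respect to $\nu_N$) costs nothing. This is harmless because $\nu_N$ is supported on the $N$ sample points, so $\psi^\varphi$ only needs to be finite at $\hat{w}^{(1)},\dots,\hat{w}^{(N)}$, which can be assumed without loss of generality (if some $\inf$ is $-\infty$, the inequality is trivially satisfied and such $\varphi$ never witnesses the supremum). With this caveat handled, the proof is simply an instantiation of the Kantorovich dual at the empirical measure $\nu_N$.
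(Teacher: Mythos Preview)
Your proposal is correct and follows essentially the same approach as the paper: both arguments hinge on replacing the second Kantorovich potential $\psi$ by its $c$-transform $\psi^\varphi(w')=\inf_{w}[d(w,w')^p-\varphi(w)]$ and then using the empirical form of $\nu_N$ to reduce the integral to a finite sum, after which the two inclusions are immediate. The paper organizes the argument as two separate set inclusions rather than first deriving the single-potential dual formula, but the content is identical and your handling of the $L^1(\mathrm{d}\nu_N)$ integrability bookkeeping is exactly the right observation.
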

A proof for this lemma is contained in Appendix~\ref{app:ball}.
Note that the minimization problem in the reformulated Wasserstein ball is finite dimensional, unlike the original Monge-Kantorovich problem.
In the following section, we propose computationally tractable value and policy iteration algorithms
 by using the reformulation results 
 in DRO based on Kantorovich duality.

\section{Dynamic Programming Solution and Analysis}
\label{sec:sol}

%

Our first goal is to 
develop a computationally tractable dynamic programming  (DP) solution for the DR-control problem \eqref{dr_opt}. 
We begin by characterizing an optimality condition using the Bellman's principle. 
%

\subsection{Bellman's Principle of Optimality}\label{sec:opt}

For any $v \in \mathbb{B}_\xi (\mathcal{X})$, let $T$ be the Bellman operator of the DR-control problem~\eqref{dr_opt}, defined by
\begin{equation}\nonumber
(T v) (\bm{x}) := \inf_{\bm{u} \in \mathcal{U}(\bm{x})} \sup_{\bm{\mu} \in \mathcal{D}} \bigg [ c (\bm{x}, \bm{u})
+ \alpha \int_{\mathcal{W}} v(f (\bm{x}, \bm{u}, w)) \bm{\mu}(\mathrm{d} w) \bigg ]
\end{equation}
for every $\bm{x} \in \mathcal{X}$.
Assumption~\ref{ass_sc} enables us to conduct the contraction analysis with respect to the weighted sup-norm $\| \cdot\|_\xi$ defined by
\[
\| v \|_\xi := \sup_{\bm{x} \in \mathcal{X}} \frac{|v(\bm{x}) |}{\xi(\bm{x})}.
\]
The second and third conditions in Assumption~\ref{ass_sc} play a critical role in preserving the lower semicontinuity of the value function when applying the Bellman operator
as well as
 in the existence and optimality of deterministic stationary policies.
Let $\Pi^{DS}$ be the set of deterministic stationary policies, i.e.,
$\Pi^{DS} := \{\pi: \mathcal{X} \to \mathcal{U} \mid \pi (x_t) = u_t \in \mathcal{U}(x_t)$, \mbox{$\pi$  measurable}\}.
Then, the following lemmas hold:

\begin{lemma}[Contraction and Monotonicity]~\label{lem:contraction}
Suppose that Assumption~\ref{ass_sc} holds.
Then, $Tv \in \mathbb{B}_{lsc}(\mathcal{X})$ for any $v \in \mathbb{B}_{lsc}(\mathcal{X})$.
Furthermore, the Bellman operator $T: \mathbb{B}_{lsc}(\mathcal{X}) \to \mathbb{B}_{lsc}(\mathcal{X})$ is a $\tau$-contraction mapping with respect to $\| \cdot \|_\xi$, where $\tau := \alpha\beta \in (0,1)$\footnote{Here, the constant $\beta \in [1, 1/\alpha)$ is defined in Assumption~\ref{ass_sc}-1).},
i.e., 
\[
\| Tv - Tv' \|_\xi \leq \tau \| v - v' \|_\xi \quad \forall v, v' \in \mathbb{B}_{lsc}(\mathcal{X}).
\]
Furthermore, $T$ is monotone, i.e.,
\[
T v \leq T v'  \quad \forall v, v' \in \mathcal{X}_\xi (\mathcal{X})\mbox{ s.t. } v \leq v'.
\]
\end{lemma}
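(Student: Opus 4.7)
The plan is to prove the three assertions in order: (i) well-posedness $Tv\in\mathbb{B}_{lsc}(\mathcal{X})$, (ii) the $\tau$-contraction property, and (iii) monotonicity, since (ii) and (iii) follow from short direct arguments once (i) is settled.

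For (i), I would first check the weighted-norm bound. Using $|c(\bm{x},\bm{u})|\le b\xi(\bm{x})$ and $\int \xi(f(\bm{x},\bm{u},w))\bm{\mu}(\mathrm{d}w)=\xi'(\bm{x},\bm{u})\le\beta\xi(\bm{x})$ from Assumption~\ref{ass_sc}-1), the integrand inside the $\sup$ over $\bm{\mu}\in\mathcal{D}$ is bounded in absolute value by $(b+\alpha\beta\|v\|_\xi)\xi(\bm{x})$, hence $\|Tv\|_\xi\le b+\alpha\beta\|v\|_\xi<\infty$. Then I would establish lower semicontinuity by working inside out. Fix $v\in\mathbb{B}_{lsc}(\mathcal{X})$ and approximate it from below by an increasing sequence of continuous bounded functions $\chi_k\uparrow v$ (truncating if necessary to respect the weight $\xi$). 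For each $\bm{\mu}\in\mathcal{D}$, Assumption~\ref{ass_sc}-2) gives that $(\bm{x},\bm{u})\mapsto\int\chi_k(f(\bm{x},\bm{u},w))\bm{\mu}(\mathrm{d}w)$ is continuous on $\mathbb{K}$; passing to the pointwise supremum in $k$ via monotone convergence shows $(\bm{x},\bm{u})\mapsto\int v(f(\bm{x},\bm{u},w))\bm{\mu}(\mathrm{d}w)$ is lsc. Combining with the lsc of $c$ gives that the integrand is lsc on $\mathbb{K}$ for each fixed $\bm{\mu}$, and the pointwise supremum over $\bm{\mu}\in\mathcal{D}$ preserves lsc. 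Finally I apply Berge's maximum theorem: with $\mathcal{U}(\bm{x})$ compact-valued and upper semicontinuous (Assumption~\ref{ass_sc}-3)), the infimum of a lsc function over $\bm{u}\in\mathcal{U}(\bm{x})$ is lsc in $\bm{x}$; this is the standard measurable-selection fact invoked in~\cite{Gonzalez2003}.

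For (ii), I would exploit the elementary inequality $|\inf_a\sup_b F_1-\inf_a\sup_b F_2|\le\sup_{a,b}|F_1-F_2|$. Applied pointwise at $\bm{x}$, this yields
\[
|(Tv)(\bm{x})-(Tv')(\bm{x})|\le\alpha\sup_{\bm{u}\in\mathcal{U}(\bm{x})}\sup_{\bm{\mu}\in\mathcal{D}}\int_{\mathcal{W}}|v(f(\bm{x},\bm{u},w))-v'(f(\bm{x},\bm{u},w))|\,\bm{\mu}(\mathrm{d}w).
\]
Bounding the integrand by $\|v-v'\|_\xi\,\xi(f(\bm{x},\bm{u},w))$ and integrating against $\bm{\mu}$, Assumption~\ref{ass_sc}-1) gives $\xi'(\bm{x},\bm{u})\le\beta\xi(\bm{x})$, hence $|(Tv-Tv')(\bm{x})|\le\alpha\beta\|v-v'\|_\xi\,\xi(\bm{x})$. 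Dividing by $\xi(\bm{x})$ and taking the supremum yields $\|Tv-Tv'\|_\xi\le\tau\|v-v'\|_\xi$ with $\tau=\alpha\beta\in(0,1)$.

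For (iii), if $v\le v'$ pointwise, then for every $(\bm{x},\bm{u})$ and every $\bm{\mu}\in\mathcal{D}$ we have $\int v(f(\bm{x},\bm{u},w))\bm{\mu}(\mathrm{d}w)\le\int v'(f(\bm{x},\bm{u},w))\bm{\mu}(\mathrm{d}w)$, and monotonicity is preserved by adding $c(\bm{x},\bm{u})$, taking $\sup_{\bm{\mu}}$, and then $\inf_{\bm{u}}$, giving $Tv\le Tv'$.

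The main obstacle is step (i): propagating lower semicontinuity through the $\sup$ over the infinite-dimensional set $\mathcal{D}\subset\mathcal{P}(\mathcal{W})$, since $v$ is only lsc (not continuous) and Assumption~\ref{ass_sc}-2) speaks only of continuous bounded integrands. The approximation $\chi_k\uparrow v$ combined with monotone convergence is the key device. With the Berge-type argument for the outer infimum being standard under the compactness/upper-semicontinuity hypothesis on $\mathcal{U}(\cdot)$, the remaining two claims fall out cleanly.
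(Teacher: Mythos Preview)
Your proposal is correct. The paper does not actually prove this lemma; it simply states that Lemmas~\ref{lem:contraction} and~\ref{lem:ms} ``follow immediately from~\cite[Lemma 4.4 and Theorem 4.1]{Gonzalez2003}.'' So there is no in-paper argument to compare against, and what you have written is essentially the standard proof one finds in that reference.

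A couple of remarks. Your parts (ii) and (iii) are exactly the right short arguments; in fact the paper reproduces this same $\epsilon$-perturbation/weighted-norm contraction computation verbatim for the simpler operator $T^\pi$ in the proof of Lemma~\ref{lem:cont2}, so your treatment of $T$ is entirely aligned with the paper's style. For part (i), you correctly identify the only genuine subtlety --- pushing lower semicontinuity through the integral when $v$ is merely lsc and possibly unbounded --- and your device of approximating $v$ from below by continuous functions is the standard one. The parenthetical ``truncating if necessary to respect the weight $\xi$'' is doing real work here, since $v$ need not be bounded below by a constant; the clean way to handle this in the weighted setting is to work with $v/\xi$ (which is bounded) or to truncate $v$ at $-M\xi$ and let $M\to\infty$, using the continuity of $\xi'$ from Assumption~\ref{ass_sc}-1). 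This is exactly the kind of bookkeeping \cite{Gonzalez2003} carries out, so your sketch is faithful to the cited source even if the paper itself omits the details.
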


\begin{lemma}[Measurable selection]\label{lem:ms}
Suppose that Assumption~\ref{ass_sc} holds.
There exist a measurable function $v^\star \in \mathbb{B}_{lsc}(\mathcal{X})$ and a deterministic stationary policy $\pi^\star \in \Pi^{DS}$ such that 
\begin{enumerate}
\item $v^\star$ is the unique function in $\mathbb{B}_{lsc}(\mathcal{X})$ that satisfies the following Bellman equation:
\begin{equation}\label{bellman}
v = Tv;
\end{equation}
\item given any fixed $\bm{x} \in \mathcal{X}$, 
\begin{equation} \nonumber
\begin{split}
&v^\star (\bm{x}) = \sup_{\bm{\mu} \in \mathcal{D}} \bigg [
 c (\bm{x}, \pi^\star (\bm{x})) + \alpha \int_{\mathcal{W}} v^\star (f (\bm{x}, \pi^\star (\bm{x}), w)) \: \bm{\mu}(\mathrm{d} w)
\bigg ]
\end{split}
\end{equation}
and
$\lim_{t \to \infty} \alpha^t \mathbb{E}^{\pi, \gamma} [ v^\star (x_t) ] = 0$ for all $(\pi, \gamma) \in \Pi \times \Gamma$.
\end{enumerate}
\end{lemma}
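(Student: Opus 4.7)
The plan is to establish Part~1 via the Banach fixed point theorem on $(\mathbb{B}_{lsc}(\mathcal{X}), \|\cdot\|_\xi)$, and Part~2 via a classical measurable selection result for semicontinuous models together with the weighted growth estimate built into Assumption~\ref{ass_sc}. First I would note that $\mathbb{B}_{lsc}(\mathcal{X})$ is a closed subset of the Banach space $\mathbb{B}_\xi(\mathcal{X})$, because a weighted-sup limit of lsc functions is lsc; hence it is itself a complete metric space. Lemma~\ref{lem:contraction} then supplies both the self-map property $T\mathbb{B}_{lsc}(\mathcal{X}) \subseteq \mathbb{B}_{lsc}(\mathcal{X})$ and the $\tau$-contraction with $\tau = \alpha \beta \in (0,1)$, so Banach's fixed point theorem produces a unique $v^\star \in \mathbb{B}_{lsc}(\mathcal{X})$ with $v^\star = T v^\star$, giving Part~1.

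For the selector, I would study the function
\[
F(\bm{x}, \bm{u}) := c(\bm{x}, \bm{u}) + \alpha \sup_{\bm{\mu} \in \mathcal{D}} \int_{\mathcal{W}} v^\star(f(\bm{x}, \bm{u}, w)) \, \bm{\mu}(\mathrm{d}w)
\]
on $\mathbb{K}$ and show it is lsc. Since $v^\star \in \mathbb{B}_{lsc}(\mathcal{X})$, I would approximate it from below by an increasing sequence of continuous bounded functions $\chi_n$ (using an inf-convolution or a standard truncation/regularization on a Polish space); for each $n$ and each fixed $\bm{\mu} \in \mathcal{D}$, Assumption~\ref{ass_sc}-2) makes $(\bm{x}, \bm{u}) \mapsto \int \chi_n(f(\bm{x}, \bm{u}, w)) \, \bm{\mu}(\mathrm{d}w)$ continuous on $\mathbb{K}$. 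Monotone convergence then upgrades this to lower semicontinuity of $(\bm{x}, \bm{u}) \mapsto \int v^\star \circ f \, \mathrm{d}\bm{\mu}$ for each $\bm{\mu}$, and taking the supremum over $\bm{\mu} \in \mathcal{D}$ preserves lsc. Combined with lsc of $c$ from Assumption~\ref{ass_sc}-1), $F$ is lsc on $\mathbb{K}$. Because $\mathcal{U}(\bm{x})$ is compact and $\bm{x} \mapsto \mathcal{U}(\bm{x})$ is upper semicontinuous by Assumption~\ref{ass_sc}-3), the standard measurable selection theorem for lsc cost in semicontinuous models (see, e.g., \cite{Gonzalez2003}) yields a deterministic stationary $\pi^\star \in \Pi^{DS}$ attaining $\inf_{\bm{u} \in \mathcal{U}(\bm{x})} F(\bm{x}, \bm{u}) = v^\star(\bm{x})$ pointwise, which is exactly the identity required in Part~2.

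For the transversality condition, I would exploit $|v^\star(\bm{x})| \leq \|v^\star\|_\xi \, \xi(\bm{x})$ together with the growth bound $\int_{\mathcal{W}} \xi(f(\bm{x}, \bm{u}, w)) \, \bm{\mu}(\mathrm{d}w) \leq \beta \xi(\bm{x})$ from Assumption~\ref{ass_sc}-1). Iterating the latter along any admissible trajectory via the tower property gives $\mathbb{E}^{\pi, \gamma}[\xi(x_t) \mid x_0 = \bm{x}] \leq \beta^t \xi(\bm{x})$ uniformly in $(\pi, \gamma) \in \Pi \times \Gamma$, so
\[
\alpha^t \, \mathbb{E}^{\pi, \gamma}[|v^\star(x_t)|] \leq \|v^\star\|_\xi \, (\alpha \beta)^t \, \xi(\bm{x}) \longrightarrow 0
\]
since $\tau = \alpha \beta < 1$. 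The main obstacle I anticipate is the lower semicontinuity of $F$: the sup runs over the infinite-dimensional ambiguity set $\mathcal{D}$ rather than a compact parameter set, so a direct Berge-type argument is unavailable, and $v^\star$ is only lsc (not continuous, possibly unbounded). The monotone-approximation device above is the key technical tool that lets the weight function $\xi$ and Assumption~\ref{ass_sc}-2) carry the needed continuity through the supremum, while the remaining steps are essentially bookkeeping.
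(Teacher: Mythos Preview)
Your proposal is correct and follows essentially the same route as the paper, which does not give an independent proof but simply invokes \cite[Lemma~4.4 and Theorem~4.1]{Gonzalez2003} for both Lemma~\ref{lem:contraction} and Lemma~\ref{lem:ms}; your sketch is in fact an explicit unpacking of those cited results (Banach fixed point on $\mathbb{B}_{lsc}$, lower semicontinuity of the inner functional via monotone approximation, measurable selection under the compactness/usc hypotheses, and the $\tau^t$-decay of $\alpha^t \mathbb{E}[\xi(x_t)]$). One small refinement: since $v^\star$ is only bounded below by $-\|v^\star\|_\xi \xi$ rather than by a constant, the approximating sequence $\chi_n$ cannot be taken bounded and continuous directly; instead approximate the nonnegative lsc function $v^\star + \|v^\star\|_\xi \xi$ from below by bounded continuous $g_n$, and then use that $(\bm{x},\bm{u}) \mapsto \int \xi(f(\bm{x},\bm{u},w))\,\bm{\mu}(\mathrm{d}w)$ is itself continuous by Assumption~\ref{ass_sc}-1) to conclude continuity of $\int (g_n - \|v^\star\|_\xi \xi)\circ f \,\mathrm{d}\bm{\mu}$ before passing to the limit.
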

These lemmas follow immediately from~\cite[Lemma 4.4 and Theorem 4.1]{Gonzalez2003}.
In fact, for any $v \in \mathbb{B}_{lsc}(\mathcal{X})$,  there exists $\hat{\bm{u}} \in \mathcal{U}(\bm{x})$ such that
$(T v)(\bm{x}) = \sup_{\bm{\mu} \in \mathcal{D}} [
c (\bm{x}, \hat{\bm{u}}) + \alpha \int_{\mathcal{W}} v( f( \bm{x},  \hat{\bm{u}},  w)) \: \bm{\mu} (\mathrm{d} w) 
]$
for every $\bm{x} \in \mathcal{X}$ under Assumption~\ref{ass_sc} (see \cite[Lemma~3.3]{Gonzalez2003}).\footnote{Thus, the outer minimization problem in the definition of $T$ admits an optimal solution when $v \in \mathbb{B}_{lsc}(\mathcal{X})$, and ``$\inf$" can be replaced by ``$\min$."}
If we let $\pi^\star (\bm{x}) := \hat{\bm{u}}$ for each $\bm{x} \in \mathcal{X}$, then $\pi^\star$ is an optimal distributionally robust policy, which is deterministic and stationary. More specifically, the following principle of optimality holds: 
\begin{theorem}[Existence and optimality of deterministic stationary policy]\label{thm:ds}
Suppose that Assumption~\ref{ass_sc} holds.
Then, $(v^\star, \pi^\star) \in \mathbb{B}_{lsc}(\mathcal{X}) \times \Pi^{DS}$ defined in Lemma~\ref{lem:ms} satisfies
\[
v^\star(\bm{x}) = \inf_{\pi \in \Pi} \sup_{\gamma \in \Gamma} \; J_{\bm{x}} (\pi, \gamma) = \sup_{\gamma \in \Gamma} \; J_{\bm{x}} (\pi^\star, \gamma) \quad \forall \bm{x} \in \mathcal{X}.
\]
In  words, $v^\star$ is the optimal value function of the DR-control problem~\eqref{dr_opt}, and $\pi^\star$ is an optimal policy, which is
 deterministic and stationary.
\end{theorem}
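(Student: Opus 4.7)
The plan is to establish the identity $v^\star(\bm{x}) = \inf_{\pi}\sup_{\gamma}J_{\bm{x}}(\pi,\gamma) = \sup_{\gamma}J_{\bm{x}}(\pi^\star,\gamma)$ by a sandwich argument around the three inequalities $\sup_{\gamma}J_{\bm{x}}(\pi^\star,\gamma) \le v^\star(\bm{x}) \le \inf_{\pi}\sup_{\gamma}J_{\bm{x}}(\pi,\gamma) \le \sup_{\gamma}J_{\bm{x}}(\pi^\star,\gamma)$, whose last link is trivial since $\pi^\star\in\Pi$. The nontrivial content is that the Bellman value $v^\star$ is simultaneously an upper bound on the worst-case performance of $\pi^\star$ and a lower bound on the minimax cost; both directions follow from iterating the one-step relations in Lemma~\ref{lem:ms} and invoking the vanishing terminal condition $\lim_{t\to\infty}\alpha^t\mathbb{E}^{\pi,\gamma}[v^\star(x_t)]=0$ provided by the same lemma.

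For the upper bound $\sup_{\gamma}J_{\bm{x}}(\pi^\star,\gamma) \le v^\star(\bm{x})$, I would fix an arbitrary $\gamma\in\Gamma$ and apply Lemma~\ref{lem:ms}(2), which states $v^\star(\bm{x}) \ge c(\bm{x},\pi^\star(\bm{x})) + \alpha\int v^\star(f(\bm{x},\pi^\star(\bm{x}),w))\,\mu(\mathrm{d}w)$ for every $\mu\in\mathcal{D}$, in particular for the realized kernel $\mu=\gamma_t(h_t^e)$. Taking $\mathbb{E}^{\pi^\star,\gamma}$ and iterating via the tower property $t$ times yields $v^\star(\bm{x}) \ge \mathbb{E}^{\pi^\star,\gamma}\bigl[\sum_{s=0}^{t-1}\alpha^s c(x_s,\pi^\star(x_s)) + \alpha^t v^\star(x_t) \mid x_0=\bm{x}\bigr]$; letting $t\to\infty$ and using the vanishing condition gives $v^\star(\bm{x}) \ge J_{\bm{x}}(\pi^\star,\gamma)$, and supremizing over $\gamma$ delivers the claim.

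For the lower bound $v^\star(\bm{x}) \le \inf_{\pi}\sup_{\gamma}J_{\bm{x}}(\pi,\gamma)$, I would fix $\pi\in\Pi$ and $\epsilon>0$ and build, stage by stage, a measurable approximate worst-case strategy $\hat\gamma$: at each extended history $h_t^e$, pick $\hat\gamma_t(h_t^e)\in\mathcal{D}$ to be an $\epsilon\cdot 2^{-(t+1)}$-maximizer of $\mu\mapsto\int v^\star(f(x_t,u_t,w))\,\mu(\mathrm{d}w)$. Then the Bellman equation $v^\star = Tv^\star$, peeled off once, yields $v^\star(\bm{x}) \le c(\bm{x},u_0)+\alpha\int v^\star(f(\bm{x},u_0,w))\,\hat\gamma_0(\mathrm{d}w)+\epsilon/2$; iterating and collecting the geometric tail of $\epsilon$-errors produces $v^\star(\bm{x}) \le \mathbb{E}^{\pi,\hat\gamma}\bigl[\sum_{s=0}^{t-1}\alpha^s c(x_s,u_s)+\alpha^t v^\star(x_t)\bigr]+\epsilon$, whose limit as $t\to\infty$ is $J_{\bm{x}}(\pi,\hat\gamma)+\epsilon \le \sup_{\gamma}J_{\bm{x}}(\pi,\gamma)+\epsilon$. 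Sending $\epsilon\downarrow 0$ and then infimizing over $\pi$ closes the sandwich.

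The main obstacle is the measurable selection of $\hat\gamma_t$, because $\mathcal{D}$ is an infinite-dimensional subset of $\mathcal{P}(\mathcal{W})$ and the inner functional depends measurably on $(x_t,u_t)$ through the non-linear dynamics. I would endow $\mathcal{P}(\mathcal{W})$ with the topology of weak convergence, use Lemma~\ref{lem:ball} together with the Wasserstein radius to argue that $\mathcal{D}$ is weakly compact, and then apply a Kuratowski--Ryll-Nardzewski-type selector to the upper-semicontinuous correspondence of $\epsilon$-maximizers; Assumption~\ref{ass_sc}(2) supplies the joint continuity in $(\bm{x},\bm{u})$ needed to set this up, while Assumption~\ref{ass_sc}(1) with weight $\xi$ gives the uniform bound $\alpha^t\mathbb{E}^{\pi,\gamma}[v^\star(x_t)] \le (b/(1-\tau))(\alpha\beta)^t \xi(\bm{x})\to 0$ that legitimizes passage to the limit. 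Because these are exactly the ingredients of \cite[Theorem~4.1]{Gonzalez2003}, verifying them for the Wasserstein ambiguity set allows the conclusion to be imported directly from that reference.
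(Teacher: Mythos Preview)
Your proposal is correct and in fact supplies far more detail than the paper does: the paper offers no self-contained argument for this theorem and simply records that ``the existence and optimality results are shown in a more general minimax control setting in \cite[Theorem~4.1]{Gonzalez2003}.'' Your sandwich/verification argument---iterating the one-step Bellman inequalities from Lemma~\ref{lem:ms} and passing to the limit via the vanishing terminal condition---is exactly the mechanism behind that cited result, and you explicitly close by invoking the same reference, so the two approaches coincide.
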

The existence and optimality results are shown in a more general minimax control setting in \cite[Theorem 4.1]{Gonzalez2003}.

\subsection{Value Iteration}

To compute the optimal value function $v^\star$,
we first consider a \emph{value iteration} (VI) approach, $v_{k+1} := T v_k$, where $v_k$ denotes the value function evaluated at the $k$th iteration and $v_0$ is initialized as an arbitrary function in $\mathbb{B}_{lsc}(\mathcal{X})$. 
By the contraction property of $T$ (Lemma~\ref{lem:contraction}), 
the Banach fixed-point theorem implies that 
$v_k$ converges to $v^\star$ pointwise as $k$ tends to $\infty$ under Assumption~\ref{ass_sc}.
However, this approach requires us to solve the infinite-dimensional minimax optimization problem in the Bellman operator for each $\bm{x} \in \mathcal{X}$ in each iteration. 
To alleviate this issue, 
we reformulate the problem into a computationally tractable form
 by using modern Wasserstein DRO~\cite{Esfahani2015, Gao2016}.

\begin{proposition}\label{prop:si}
Suppose that the function $w \mapsto v(f(\bm{x}, \bm{u}, w))$ lies in $L^1 (\mathrm{d} \nu_N)$ for each $(\bm{x}, \bm{u}) \in \mathbb{K}$.
Then, the Bellman operator $T$ can be expressed as
\begin{equation} \label{semi}
\begin{split}
(T  v)(\bm{x}) = 
\inf_{\bm{u}, \lambda, \ell} \; &\bigg [ \lambda \theta^p + c (\bm{x}, \bm{u}) +  \frac{1}{N} \sum_{i=1}^N \ell_i \bigg ]\\
\mbox{s.t.} \; &  \alpha v( f(\bm{x}, \bm{u}, w)) - \lambda d( w, \hat{w}^{(i)} )^p \leq  \ell_i \; \; \forall w \in \mathcal{W}\\
&  \bm{u} \in \mathcal{U}(\bm{x}), \: \lambda \geq 0, \: \ell \in \mathbb{R}^N
\end{split}
\end{equation}
for each $\bm{x} \in \mathcal{X}$,
where the first inequality constraint holds for all $i=1, \ldots, N$. 
\end{proposition}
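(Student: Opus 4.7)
The plan is to decouple the minimization over $\bm{u}$ from the inner maximization over $\bm{\mu}$, rewrite the inner sup by Wasserstein strong duality, and then linearize the resulting inner suprema over $w$ via epigraphic variables $\ell_i$. Fix $(\bm{x}, \bm{u}) \in \mathbb{K}$ and set $g(w) := \alpha v(f(\bm{x}, \bm{u}, w))$; by hypothesis $g \in L^1(\mathrm{d}\nu_N)$, so the inner problem
\[
\sup_{\bm{\mu} \in \mathcal{D}} \int_{\mathcal{W}} g(w)\, \bm{\mu}(\mathrm{d}w)
\]
is a well-defined optimization over probability measures on $\mathcal{W}$ subject to $W_p(\bm{\mu}, \nu_N) \leq \theta$, and the additive constant $c(\bm{x}, \bm{u})$ passes outside both operations.

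Next I would invoke Kantorovich duality for this single DRO problem, which is the core reformulation result of \cite{Esfahani2015, Gao2016}. Concretely, one lifts the sup over $\bm{\mu}$ to a sup over couplings $\kappa \in \mathcal{P}(\mathcal{W}^2)$ with second marginal $\nu_N$ and mass-transport cost at most $\theta^p$, disintegrates $\kappa(\mathrm{d}w,\mathrm{d}w') = \kappa_{w'}(\mathrm{d}w)\,\nu_N(\mathrm{d}w')$, and then dualizes the single budget constraint $\int d(w,w')^p \kappa(\mathrm{d}w,\mathrm{d}w') \leq \theta^p$ with a multiplier $\lambda \geq 0$. Strong duality — which holds here since $\mathcal{W}$ is a Borel subset of $\mathbb{R}^l$, $\nu_N$ is discrete, and $g$ is $\nu_N$-integrable — yields
\[
\sup_{\bm{\mu} \in \mathcal{D}} \int g\, \mathrm{d}\bm{\mu} \;=\; \inf_{\lambda \geq 0}\bigg\{ \lambda \theta^p + \frac{1}{N}\sum_{i=1}^N \sup_{w \in \mathcal{W}}\big[g(w) - \lambda d(w, \hat w^{(i)})^p\big] \bigg\}.
\]
This can also be read off from the dual description of $\mathcal{D}$ in Lemma~\ref{lem:ball} by choosing the pointwise-optimal test function $\varphi^\star_\lambda(w) := \inf_{w' \in \mathcal{W}}[\lambda d(w,w')^p + g(w)] - \ldots$; either route leads to the same closed-form dual.

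Finally I would linearize each inner supremum by introducing $\ell_i \in \mathbb{R}$ satisfying $\ell_i \geq \sup_{w \in \mathcal{W}}[g(w) - \lambda d(w, \hat w^{(i)})^p]$, equivalent to the semi-infinite constraint
\[
\alpha v(f(\bm{x}, \bm{u}, w)) - \lambda d(w, \hat w^{(i)})^p \leq \ell_i \quad \forall w \in \mathcal{W},\; i = 1,\dots,N,
\]
and then minimize $\lambda\theta^p + \frac{1}{N}\sum_i \ell_i$ jointly with $\lambda \geq 0$. Reinstating the outer $\inf_{\bm{u} \in \mathcal{U}(\bm{x})}$ and the cost $c(\bm{x},\bm{u})$ produces exactly the semi-infinite program in \eqref{semi}. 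The main technical obstacle is the strong-duality step: one must verify there is no duality gap for the Wasserstein moment problem in a potentially non-compact $\mathcal{W}$ with a possibly unbounded $g$. The $L^1(\mathrm{d}\nu_N)$ assumption together with $\lambda \geq 0$ acting as a Slater multiplier (any $\lambda$ sufficiently large dominates the budget) is precisely what is needed to rule this out, mirroring the argument in \cite[Thm.~4.2]{Esfahani2015} or \cite[Thm.~1]{Gao2016}, which I would cite and adapt rather than rederive.
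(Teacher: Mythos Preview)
Your proposal is correct and follows essentially the same approach as the paper: the paper's own justification is simply that the reformulation follows from Kantorovich duality on the Wasserstein ball (Lemma~\ref{lem:ball}) together with the no-duality-gap result of \cite[Theorem~1]{Gao2016}, which is exactly the route you outline in more detail (dualize the single transport-budget constraint, then introduce the epigraphic variables $\ell_i$). If anything, you give more of the mechanics than the paper does.
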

This reformulation can be obtained by using Kantorovich duality  on the Wasserstein ambiguity set (Lemma~\ref{lem:ball}).
It is shown in \cite[Theorem 1]{Gao2016} that there is no duality gap.

Note that 
the reformulated optimization problem in Proposition~\ref{prop:si}  has finite-dimensional decision variables as $\bm{u} \in \mathcal{U}(\bm{x}) \subseteq \mathcal{U} \subseteq \mathbb{R}^m$, $\lambda \in \mathbb{R}$ and $\ell \in \mathbb{R}^N$.
However, the first inequality constraint must hold for all $w$ in the support $\mathcal{W}$, which could be a dense set. 
Thus, in general, the reformulated problem is a \emph{semi-infinite program}. 
This semi-infinite program can be solved 
by using
several existing convergent algorithms, such as discretization, sampling-based methods~(see \cite{Reemtsen1991, Hettich1993, Lopez2007, Calafiore2005} and the references therein).

To interpret this reformulation, we consider the following equivalent integral form: 
\begin{equation} \nonumber
\begin{split}
&(T  v)(\bm{x}) = \inf_{\bm{u} \in \mathcal{U}(\bm{x}), \lambda\geq 0}   \bigg [ \lambda \theta^p  +  \int_{\mathcal{W}}  \sup_{w \in \mathcal{W}}  \big [ c(\bm{x}, \bm{u}) + \alpha v (f(\bm{x}, \bm{u}, w)) - \lambda d(w,  {w}' )^p \big ] \nu_N (\mathrm{d} w') \bigg ].
\end{split}
\end{equation}
The integrand above can be interpreted as a \emph{regularized cost-to-go} function.
The regularized value is then integrated using the empirical distribution $\nu_N$. 
The first term $\lambda \theta^p$, which is nonnegative, is added to compensate for this regularization effect and the optimism induced by the empirical distribution so that the reformulated optimization problem is consistent with the original one.

We define an \emph{$\epsilon$-optimal policy} of \eqref{dr_opt} as $\pi_\epsilon \in \Pi$ that satisfies
\[
\| v^{\pi_\epsilon} - v^\star\|_\xi < \epsilon
\]
for $\epsilon > 0$,
where $v^\pi: \mathcal{X} \to \mathbb{R}$ is the (worst-case) value function of a policy $\pi \in \Pi$, i.e.,
\begin{equation}\label{worst_value}
v^\pi (\bm{x}) := \sup_{\gamma \in \Gamma} J_{\bm{x}} (\pi, \gamma).
\end{equation}
The following VI algorithm can be used to find an $\epsilon$-optimal policy: 
\begin{enumerate}
\item Initialize $v_0$ as  an arbitrary function in $\mathbb{B}_{lsc} (\mathcal{X})$, and set $k := 0$;

\item For each $\bm{x} \in \mathcal{X}$,
compute 
\[
v_{k+1} (\bm{x}):= (T v_k) (\bm{x})
\]
 by
 solving the semi-infinite program~\eqref{semi}  with $v:= v_k$;

\item If the stopping criterion is met, then go to Step 4); Otherwise, set $k \leftarrow k+1$ and go to Step 2);

\item For each $\bm{x} \in \mathcal{X}$, set 
\[
\hat{\pi} (\bm{x}) := \hat{\bm{u}},
\]
 where $\hat{\bm{u}}$ is an optimal $\bm{u}$ of the semi-infinite program~\eqref{semi} that computes $(Tv_{k}) (\bm{x})$, and stop.

\end{enumerate}
Note that 
the existence of an optimal $\hat{\bm{u}}$ in Step 4) is guaranteed under Assumption~\ref{ass_sc} by \cite[Lemma 3.3]{Gonzalez2003}.
A typical stopping criterion in VI is $\| v_{k+1} - v_k \|_\xi  < \delta$ for some threshold $\delta > 0$.
However, 
we can even compute the number of iterations required to achieve the desired precision $\epsilon > 0$. 
Given any $\pi \in \Pi^{DS}$ and $v \in \mathbb{B}_\xi(\mathcal{X})$, 
let 
\[
(T^\pi v)(\bm{x}) := \sup_{\bm{\mu} \in \mathcal{D}} \bigg [
c (\bm{x}, \pi(\bm{x}))
+ \alpha \int_{\mathcal{W}} v(f (\bm{x}, \pi(\bm{x}), w)) \bm{\mu}(\mathrm{d}w)
\bigg]
\]
for all $\bm{x} \in \mathcal{X}$. 
The Bellman operator $T^\pi$ has the following properties:
\begin{lemma}\label{lem:cont2}
Suppose that Assumption~\ref{ass_sc} holds. 
Then, given any $\pi \in \Pi^{DS}$, we have $T^\pi v \in \mathbb{B}_{\xi}(\mathcal{X})$ for any $v \in \mathbb{B}_{\xi}(\mathcal{X})$.
Furthermore, 
the operator $T^\pi:  \mathbb{B}_{\xi} (\mathcal{X}) \to  \mathbb{B}_{\xi} (\mathcal{X})$ is a $\tau$-contraction mapping with respect to $\| \cdot \|_\xi$, i.e.,
\[
\|T^\pi v - T^\pi v'\|_\xi \leq \tau \| v - v'\|_\xi \quad \forall v, v' \in \mathbb{B}_{\xi} (\mathcal{X}),
\]
where $\tau :=\alpha\beta \in (0,1)$. 
Furthermore, $T^{\pi}$ is monotone, i.e.,
\[
T^{\pi} v \leq T^{\pi} v'  \quad \forall v, v' \in \mathcal{X}_\xi (\mathcal{X})\mbox{ s.t. } v \leq v'.
\]
\end{lemma}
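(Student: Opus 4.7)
The plan is to mirror the proof of Lemma~\ref{lem:contraction}, but in the simpler setting where the outer minimization over $\mathcal{U}(\bm{x})$ has been collapsed by fixing a deterministic stationary policy $\pi \in \Pi^{DS}$. The three claims (self-map, contraction, monotonicity) follow, in that order, from the growth bound in Assumption~\ref{ass_sc}-1), a standard ``sup-inequality'' estimate, and the monotonicity of integration, respectively.

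First I would verify that $T^\pi v \in \mathbb{B}_\xi(\mathcal{X})$ whenever $v \in \mathbb{B}_\xi(\mathcal{X})$. The pointwise bound $|c(\bm{x},\pi(\bm{x}))| \leq b\,\xi(\bm{x})$ together with $|v(f(\bm{x},\pi(\bm{x}),w))| \leq \|v\|_\xi\,\xi(f(\bm{x},\pi(\bm{x}),w))$ gives, for any $\bm{\mu} \in \mathcal{D}$,
\[
\Big| c(\bm{x},\pi(\bm{x})) + \alpha \int_{\mathcal{W}} v(f(\bm{x},\pi(\bm{x}),w))\,\bm{\mu}(\mathrm{d}w) \Big|
 \leq b\,\xi(\bm{x}) + \alpha\|v\|_\xi\,{\xi}'(\bm{x},\pi(\bm{x})) \leq \big(b + \alpha\beta\|v\|_\xi\big)\xi(\bm{x}),
\]
using $\xi' \leq \beta\xi$. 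Taking the supremum over $\bm{\mu} \in \mathcal{D}$ preserves the inequality, so $\|T^\pi v\|_\xi \leq b + \alpha\beta\|v\|_\xi < \infty$. Measurability of $T^\pi v$ is inherited from the analogous analysis in \cite{Gonzalez2003} since $\pi$ is measurable and $\mathcal{D}$ is fixed; this is the place where I expect to lean on the cited semicontinuous-model machinery rather than reprove it.

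For the contraction, I would use the elementary bound $|\sup_a F(a) - \sup_a G(a)| \leq \sup_a |F(a) - G(a)|$ applied to the sup over $\bm{\mu} \in \mathcal{D}$. The stage cost $c(\bm{x},\pi(\bm{x}))$ cancels, leaving
\[
\big|T^\pi v(\bm{x}) - T^\pi v'(\bm{x})\big| \leq \alpha \sup_{\bm{\mu} \in \mathcal{D}} \int_{\mathcal{W}} \big|v - v'\big|(f(\bm{x},\pi(\bm{x}),w))\,\bm{\mu}(\mathrm{d}w) \leq \alpha\|v-v'\|_\xi\,{\xi}'(\bm{x},\pi(\bm{x})) \leq \alpha\beta\,\|v-v'\|_\xi\,\xi(\bm{x}).
\]
Dividing by $\xi(\bm{x})$ and taking the sup over $\bm{x} \in \mathcal{X}$ yields $\|T^\pi v - T^\pi v'\|_\xi \leq \tau\|v-v'\|_\xi$ with $\tau = \alpha\beta \in (0,1)$. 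For monotonicity, if $v \leq v'$ pointwise then $\int v(f(\cdot))\,\bm{\mu}(\mathrm{d}w) \leq \int v'(f(\cdot))\,\bm{\mu}(\mathrm{d}w)$ for every $\bm{\mu} \in \mathcal{D}$; adding the common term $c(\bm{x},\pi(\bm{x}))$ and taking the supremum over $\bm{\mu}$ preserves the inequality, giving $T^\pi v \leq T^\pi v'$.

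The only genuinely subtle point, compared to Lemma~\ref{lem:contraction}, is that we are now working on $\mathbb{B}_\xi$ rather than $\mathbb{B}_{lsc}$, so we no longer get lower semicontinuity for free. I would address this by noting that fixing $\pi \in \Pi^{DS}$ removes the infimum over $\bm{u}$, so the only remaining selection issue is the supremum over $\bm{\mu} \in \mathcal{D}$ of an integral against a measurable integrand; this supremum yields a measurable function of $\bm{x}$ by the same arguments used in \cite[Lemma 4.4]{Gonzalez2003}, which is precisely what is needed to stay inside $\mathbb{B}_\xi(\mathcal{X})$. Everything else is a direct analogue of the proof of Lemma~\ref{lem:contraction}, and I anticipate no further obstacles.
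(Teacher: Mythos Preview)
Your proposal is correct and follows essentially the same approach as the paper's proof: both establish the self-map property from the growth bound $\xi' \leq \beta\xi$, prove contraction by controlling the difference of suprema over $\mathcal{D}$ via the weighted-norm estimate, and dismiss monotonicity as immediate. The only cosmetic difference is that the paper uses an $\epsilon$-approximate maximizer $\hat{\bm{\mu}}$ and a symmetry argument, whereas you invoke the cleaner inequality $|\sup_a F - \sup_a G| \leq \sup_a |F-G|$ directly; the content is identical.
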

\begin{proof}
By Assumption~\ref{ass_sc}, 
it is clear that $T^\pi v \in \mathbb{B}_\xi (\mathcal{X})$ if $v \in \mathbb{B}_\xi (\mathcal{X})$. 
Fix arbitrary $v, v'  \in \mathbb{B}_{\xi} (\mathcal{X})$, and an arbitrary $\bm{x} \in \mathcal{X}$.
For any $\epsilon > 0$, there exists $\hat{\bm{\mu}} \in \mathcal{D}$ such that
\[
(T^\pi v)(\bm{x}) - \epsilon < c(\bm{x}, \pi (\bm{x})) + \alpha \int_\mathcal{W} v(f (\bm{x}, \pi(\bm{x}), w))  \hat{\bm{\mu}}(\mathrm{d}w).
\]
Thus, we have
\begin{equation}\nonumber
\begin{split}
(T^\pi v)(\bm{x}) - (T^\pi v')(\bm{x}) - \epsilon
& < \alpha \int_\mathcal{W} [v(f (\bm{x}, \pi(\bm{x}), w)) - v'(f(\bm{x}, \pi(\bm{x}), w))]  \hat{\bm{\mu}}(\mathrm{d}w)\\
&\leq \alpha \int_\mathcal{W} \| v - v' \|_\xi \xi (f(\bm{x}, \pi(\bm{x}), w)) \hat{\bm{\mu}}(\mathrm{d} w)\\
&\leq \alpha \| v - v' \|_\xi \beta \xi (\bm{x}),
\end{split}
\end{equation}
where the last inequality holds due to Assumption~\ref{ass_sc}-1).
By switching the role of $v$ and $v'$, we also have
$(T^\pi v')(\bm{x}) - (T^\pi v)(\bm{x})  -\epsilon \leq \alpha \beta \| v - v' \|_\xi  \xi (\bm{x})$. 
Since the two inequalities hold for any $\bm{x} \in \mathcal{X}$ and $\epsilon > 0$, and $\tau = \alpha \beta$, we conclude that 
$\| T^\pi v - T^\pi v' \|_\xi \leq \tau \| v - v'\|_\xi$.
It is straightforward to check that $T^\pi$ is monotone.
\end{proof}


This lemma  implies that the value function $v^\pi$ is the unique fixed point of $T^\pi$ in $\mathbb{B}_\xi (\mathcal{X})$.
By using the contraction property of $T^\pi$ and $T$, 
we can estimate the number of iterations needed to obtain an $\epsilon$-optimal policy as follows:
\begin{proposition}\label{prop:est1}
Suppose that Assumption~\ref{ass_sc} holds.
We assume that given $\epsilon > 0$, 
the total  number of iterations, $k$, in the VI algorithm satisfies
\[
k > \frac{\log [(1-\tau)^2\epsilon] - \log ( 2b\tau) }{\log \tau },
\]
where $b \geq 0$ and $\tau \in (0,1)$ are the constants defined in Assumption~\ref{ass_sc} and Lemma~\ref{lem:cont2}, respectively.
Then, $\hat{\pi}$ obtained by the VI algorithm is an $\epsilon$-optimal policy, i.e.,
\[
\| v^{\hat{\pi}} - v^\star \|_\xi < \epsilon.
\]
\end{proposition}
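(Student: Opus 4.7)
The plan is to bound $\|v^{\hat\pi}-v^\star\|_\xi$ by combining three facts already available: the $\tau$-contraction of $T$ on $\mathbb{B}_{lsc}(\mathcal{X})$ in Lemma~\ref{lem:contraction}; the $\tau$-contraction of $T^{\hat\pi}$ on $\mathbb{B}_\xi(\mathcal{X})$ in Lemma~\ref{lem:cont2}, whose unique fixed point is $v^{\hat\pi}$; and the identity $T^{\hat\pi} v_k = T v_k = v_{k+1}$, which holds by construction of $\hat\pi$ in Step~4) of the VI algorithm as a greedy selector at $v_k$. For definiteness I take the initialization $v_0\equiv 0 \in \mathbb{B}_{lsc}(\mathcal{X})$ so that the final bound depends only on the problem data $b$, $\tau$, and $\epsilon$.

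The crux is a one-iterate bound of the form
\begin{equation} \nonumber
\|v^{\hat\pi}-v^\star\|_\xi \le \frac{2\tau}{1-\tau}\,\|v_k-v^\star\|_\xi.
\end{equation}
To derive it, insert $v_{k+1}$ and use the triangle inequality $\|v^{\hat\pi}-v^\star\|_\xi \le \|v^{\hat\pi}-v_{k+1}\|_\xi + \|v_{k+1}-v^\star\|_\xi$. For the first term, $v^{\hat\pi}=T^{\hat\pi}v^{\hat\pi}$ and $v_{k+1}=T^{\hat\pi}v_k$ together with Lemma~\ref{lem:cont2} give $\|v^{\hat\pi}-v_{k+1}\|_\xi \le \tau \|v^{\hat\pi}-v_k\|_\xi \le \tau\|v^{\hat\pi}-v^\star\|_\xi + \tau\|v^\star-v_k\|_\xi$ after one more triangle inequality. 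For the second term, $v_{k+1}=Tv_k$ and $v^\star=Tv^\star$ plus Lemma~\ref{lem:contraction} yield $\|v_{k+1}-v^\star\|_\xi \le \tau\|v_k-v^\star\|_\xi$. Summing the two estimates and absorbing the $\tau\|v^{\hat\pi}-v^\star\|_\xi$ term into the left-hand side produces the display above.

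The remaining work is geometric. Iterating the contraction of $T$ gives $\|v_k-v^\star\|_\xi \le \tau^k\|v_0-v^\star\|_\xi$. With $v_0\equiv 0$ and Assumption~\ref{ass_sc}-1), a standard weighted-sup-norm argument based on $|c|\le b\xi$ and $\xi'\le\beta\xi$ (so that $\mathbb{E}^{\pi,\gamma}[\xi(x_t)\mid x_0=\bm{x}]\le \beta^t\xi(\bm{x})$ uniformly in $(\pi,\gamma)$) yields $\|v^\star\|_\xi\le b/(1-\tau)$, hence $\|v_0-v^\star\|_\xi\le b/(1-\tau)$. Combining,
\begin{equation} \nonumber
\|v^{\hat\pi}-v^\star\|_\xi \le \frac{2\tau}{1-\tau}\cdot \tau^k \cdot \frac{b}{1-\tau} = \frac{2b\tau^{k+1}}{(1-\tau)^2}.
\end{equation}
Requiring the right-hand side to be strictly less than $\epsilon$ and taking logarithms, noting that dividing by $\log\tau<0$ flips the inequality, yields exactly the lower bound on $k$ stated in the proposition.

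The main obstacle is the first step: extracting the sharp factor $2\tau/(1-\tau)$ requires carefully interleaving the two different contractions, since the fixed points $v^{\hat\pi}$ of $T^{\hat\pi}$ and $v^\star$ of $T$ are distinct, and the identity that ties the iterates to the greedy policy, $T^{\hat\pi}v_k=Tv_k$, is available only at $v_k$ (not at $v^\star$ or $v^{\hat\pi}$). Once that one-step estimate is in hand, the geometric iteration and the $\|v^\star\|_\xi$ bound are routine.
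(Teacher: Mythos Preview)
Your proof is correct and follows essentially the same approach as the paper: you derive the key one-step bound $\|v^{\hat\pi}-v^\star\|_\xi \le \frac{2\tau}{1-\tau}\|v_k-v^\star\|_\xi$ by interleaving the contractions of $T$ and $T^{\hat\pi}$ together with $T^{\hat\pi}v_k=Tv_k$, exactly as the paper does. The only minor difference is that you fix $v_0\equiv 0$ and derive $\|v_k-v^\star\|_\xi\le \frac{b}{1-\tau}\tau^k$ directly from $\|v^\star\|_\xi\le b/(1-\tau)$, whereas the paper invokes \cite[Theorem~4.2(a)]{Gonzalez2003} for the same bound.
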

\begin{proof}
By Lemma~\ref{lem:cont2} and Theorem~\ref{thm:ds}, we have
$v^{\hat{\pi}}, v_k, v^\star \in \mathbb{B}_\xi (\mathcal{X})$. 
We observe that
\begin{equation} \nonumber
\begin{split}
\| v^{\hat{\pi}}  - v^\star \|_\xi &= \| T^{\hat{\pi}} v^{\hat{\pi}}  - v^\star \|_\xi\\
&\leq \| T^{\hat{\pi}} v^{\hat{\pi}}  - T^{\hat{\pi}} v_k \|_\xi + \| T^{\hat{\pi}} v_k  - v^\star \|_\xi\\
&\leq \tau \| v^{\hat{\pi}}  -  v_k\|_\xi + \| T v_k  - T v^\star \|_\xi,
\end{split}
\end{equation}
where the last inequality holds because of Lemma~\ref{lem:cont2}, 
 $T^{\hat{\pi}} v_k = T v_k$ and $v^\star = T v^\star$. 
By Lemma~\ref{lem:contraction}, we have
\begin{equation} \label{bound1}
\begin{split}
\| v^{\hat{\pi}}  - v^\star \|_\xi &\leq \tau \| v^{\hat{\pi}}  -  v_k\|_\xi + \tau \| v_k  - v^\star \|_\xi\\
&\leq \tau \| v^{\hat{\pi}} - v^\star \|_\xi +2\tau \| v_k -  v^\star \|_\xi.
\end{split}
\end{equation}
On the other hand, by  \cite[Theorem 4.2 (a)]{Gonzalez2003},
\begin{equation} \label{bound2}
\| v_k -  v^\star \|_\xi \leq \frac{b}{1-\tau} \tau^k < \frac{1-\tau}{2\tau}\epsilon,
\end{equation}
where the second inequality holds due to the proposed choice of $k$.
Combining \eqref{bound1} and \eqref{bound2}, we conclude that 
$\| v^{\hat{\pi}} - v^\star \|_\xi < \epsilon$.
\end{proof}

A practical implementation of the VI algorithm requires a finite-state approximation such as a discretization of the state space. 
A review on such approximation methods can be found in a recent monograph~\cite{Saldi2018}.

%
%
%

%
%

\subsection{Policy Iteration}

\emph{Policy iteration} (PI) is 
an alternative way to construct an $\epsilon$-optimal policy. 
The PI algorithm can be described as follows:
\begin{enumerate}
\item Initialize $\pi_0$ as an arbitrary policy in $\Pi^{DS}$, and set $k:=0$;

\item (Policy evaluation) Find the fixed point $v^{\pi_k}$ of $T^{\pi_k}$;

\item (Policy improvement) For each $\bm{x} \in \mathcal{X}$, set 
\[
{\pi}_{k+1} (\bm{x}) := \tilde{\bm{u}},
\]
 where $\tilde{\bm{u}}$ is an optimal $\bm{u}$ of the semi-infinite program~\eqref{semi} that computes $(Tv^{\pi_k}) (\bm{x})$;
 
\item If the stopping criterion is met, then stop and set $\tilde{\pi} := \pi_{k+1}$. Otherwise, set $k \leftarrow k+1$ and go to Step 2);

\end{enumerate}
Here, the stopping criterion can be chosen as $\| v^{\pi_k} - v^{\pi_{k-1}} \|_\xi < \delta$ for a  positive constant $\delta$.
To perform the policy evaluation step (Step 2) in a computationally tractable manner, we reformulate the infinite-dimensional maximization problem in the definition of $T^\pi$ as finite dimensional by using Wasserstein DRO~\cite{Esfahani2015, Gao2016}.

\begin{proposition}\label{prop:finite}
Suppose that Assumption~\ref{ass_sc} holds and that $v \in \mathbb{B}_{\xi}(\mathcal{X})$.
Then, the operator $T^\pi: \mathbb{B}_{\xi} (\mathcal{X}) \to \mathbb{B}_{\xi} (\mathcal{X})$ satisfies
\begin{equation}\nonumber
\begin{split}
(T^\pi  v)(\bm{x}) = \sup_{{(w, q)} \in B}
 \Big [ c (\bm{x}, \pi(\bm{x})) +
  \frac{\alpha}{N} \sum_{i=1}^N  \big [ q_1 v(f(\bm{x}, \pi(\bm{x}), \underline{w}^{(i)})) + q_2 v(f(\bm{x}, \pi(\bm{x}), \overline{w}^{(i)})) \big ]\Big ],
  \end{split}
\end{equation}
where
${B} := \big \{ (\underline{w}^{(1)}, \ldots, \underline{w}^{(N)}, \overline{w}^{(1)}, \ldots, \overline{w}^{(N)}) \in \mathcal{W}^{2N}, q \in \Delta \mid \frac{1}{N} \sum_{i=1}^N  [ q_1 d( \underline{w}^{(i)}, \hat{w}^{(i)} )^p +q_ 2 d( \overline{w}^{(i)}, \hat{w}^{(i)} )^p ] \leq \theta^p \big \}$.
\end{proposition}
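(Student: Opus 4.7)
The plan is to reuse the Kantorovich-duality reformulation that produced Proposition~\ref{prop:si} for $T$ (applied to $T^\pi$), pass to its Lagrangian dual, and then exploit the fact that the dual is a linear program over $N$ probability measures with only a single scalar coupling constraint.

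First, specializing the proof of Proposition~\ref{prop:si} to the singleton action set $\{\pi(\bm{x})\}$ gives
\begin{equation*}
(T^\pi v)(\bm{x}) = \inf_{\lambda \geq 0,\, \ell \in \mathbb{R}^N} \bigg[ \lambda \theta^p + c(\bm{x}, \pi(\bm{x})) + \frac{1}{N}\sum_{i=1}^N \ell_i \bigg]
\end{equation*}
subject to $\alpha v(f(\bm{x}, \pi(\bm{x}), w)) - \lambda d(w, \hat{w}^{(i)})^p \leq \ell_i$ for all $w \in \mathcal{W}$ and every $i$. Next I would take the Lagrangian dual by assigning nonnegative measures $\rho_i$ on $\mathcal{W}$ as multipliers to the constraints: elimination of $\ell_i$ forces $\rho_i(\mathcal{W}) = 1/N$, and elimination of $\lambda$ forces the coupling constraint $\sum_i \int d(w, \hat{w}^{(i)})^p \, \rho_i(\mathrm{d}w) \leq \theta^p$. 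Setting $\tilde{\rho}_i := N \rho_i \in \mathcal{P}(\mathcal{W})$ turns the dual into
\begin{equation*}
\sup \bigg\{ c(\bm{x}, \pi(\bm{x})) + \frac{\alpha}{N}\sum_{i=1}^N \mathbb{E}_{\tilde{\rho}_i}[v(f(\bm{x}, \pi(\bm{x}), w))] \bigg\}
\end{equation*}
over $\tilde{\rho}_i \in \mathcal{P}(\mathcal{W})$ constrained by $\frac{1}{N}\sum_i \mathbb{E}_{\tilde{\rho}_i}[d(w, \hat{w}^{(i)})^p] \leq \theta^p$, with no duality gap by the same Gao--Kleywegt strong-duality result already invoked for Proposition~\ref{prop:si}.

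Finally, I would use the extreme-point structure of this LP. Because there are $N$ normalization equalities plus a single scalar coupling inequality, a Richter--Rogosinski / Tchakaloff-type argument shows that the supremum is attained by $\tilde{\rho}_i$'s whose total support has size at most $N+1$; equivalently, at most one atom $i_0$ splits into two points while the remaining $N-1$ atoms are Dirac at some $w_j^\star \in \mathcal{W}$. Denoting the two-point split weights by $(q_1, q_2) \in \Delta$ (the 2-simplex) and the split support by $(\underline{w}^{(i_0)}, \overline{w}^{(i_0)})$, I would extend this representation to all atoms by setting $\underline{w}^{(j)} = \overline{w}^{(j)} := w_j^\star$ for each non-splitting $j$. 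Then the $j$th atom's contributions to both the objective and the budget reduce to $v(f(\bm{x}, \pi(\bm{x}), w_j^\star))$ and $d(w_j^\star, \hat{w}^{(j)})^p$, independently of $(q_1, q_2)$, because $q_1 + q_2 = 1$. Substituting this two-point-per-atom representation into the dualized objective and constraint produces exactly the set $B$ and the claimed expression for $(T^\pi v)(\bm{x})$.

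The main obstacle is the last step: lifting the discrete-LP extreme-point bound to the infinite-dimensional setting over $\mathcal{P}(\mathcal{W})$, and using it cleanly to justify a \emph{shared} pair $(q_1, q_2) \in \Delta$ across all atoms. The product structure of the empirical reference measure together with the fact that the Wasserstein ball imposes only a single scalar coupling constraint are both essential; removing either would break the ``at most one atom splits'' conclusion that enables the weights $q_1, q_2$ to be common to every $i$.
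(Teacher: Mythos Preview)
Your argument is essentially correct and, in content, matches what underlies the cited reference; the paper itself does not give a proof at all but simply writes ``This proposition follows immediately from~\cite[Corollary~2]{Gao2016}.''  So you are reconstructing the Gao--Kleywegt argument rather than appealing to it as a black box.

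Two remarks on the route you take.  First, your Step~2 is a detour: starting from the semi-infinite dual (Proposition~\ref{prop:si} specialized to $\pi(\bm{x})$) and then dualizing again simply recovers the primal formulation $\sup_{\tilde{\rho}_1,\ldots,\tilde{\rho}_N}$ that one gets directly from the definition of $T^\pi$ by disintegrating the transport plan $\kappa$ along the atoms of $\nu_N$.  Nothing is wrong, but you can reach the $\tilde{\rho}_i$-formulation in one line from the definition of the Wasserstein ball without passing through Proposition~\ref{prop:si} and strong duality twice.  Second, your extreme-point step is the substantive content, and you have identified it correctly: $N$ normalization equalities plus one scalar budget inequality force at most $N+1$ support points in total, hence at most one conditional $\tilde{\rho}_{i_0}$ needs two atoms.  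Your device of setting $\underline{w}^{(j)}=\overline{w}^{(j)}$ for the non-splitting indices so that a single shared $(q_1,q_2)\in\Delta$ works for every $i$ is exactly the right way to arrive at the set $B$.  The ``obstacle'' you flag (attainment in the infinite-dimensional LP) is precisely what the Gao--Kleywegt corollary handles, and the paper later (below Proposition~\ref{prop:wd}) quotes the same $N{+}1$-atom structure from that corollary, confirming that this is the intended mechanism.
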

This proposition follows immediately  from~\cite[Corollary~2]{Gao2016}.
The optimization variables $\underline{w}^{(1)}, \ldots, \underline{w}^{(N)}$, $\overline{w}^{(1)}, \ldots, \overline{w}^{(N)}$ can be interpreted as 
the probability atoms that characterize one of the worst-case distributions.
By the contraction property of $T^{\pi_k}$ (Lemma~\ref{lem:cont2}),
we can find the fixed point $v^{\pi_k}$ of $T^{\pi_k}$ by value iteration.
In other words, we perform $v_{\tau+1} \leftarrow T^{\pi_k} v_{\tau}$, $\tau=0, 1, \ldots$, until convergence. When computing $T^{\pi_k} v_{\tau}$, we solve the finite-dimensional optimization problem in 
 Proposition~\ref{prop:finite} with $v := v_\tau$ to completely remove the infinite-dimensionality issue inherent in the definition of $T^{\pi_k}$.
In the policy improvement step, we use the semi-infinite program formulation of $T$ in Proposition~\ref{prop:si} instead of directly solving the infinite-dimensional minimax optimization problem in the definition of $T$. 
It is well known that $\lim_{k\to \infty} \| v^{\pi_k} - v^\star \|_\xi = 0$ under Assumption~\ref{ass_sc} by the monotonicity and contraction properties of  $T$ and $T^{\pi_k}$ (Lemmas~\ref{lem:contraction} and \ref{lem:cont2})~\cite[Proposition 2.5.4]{Bertsekas2012}.

However, it is usually difficult to find  the exact fixed point $v^{\pi_k}$ of $T^{\pi_k}$ in the policy evaluation step.
Thus, we propose a modified PI algorithm, which is also called \emph{optimistic policy iteration}~\cite{Puterman1978, Bertsekas2012}:
\begin{enumerate}
\item Initialize $\tilde{v}_0$ as an arbitrary function in $\mathbb{B}_{lsc} (\mathcal{X})$ and $\{M_k\}$ as a sequence of positive integers, and set $k:=1$;

\item (Policy improvement) For each $\bm{x} \in \mathcal{X}$, set 
\[
{\pi}_{k} (\bm{x}) := \tilde{\bm{u}},
\]
 where $\tilde{\bm{u}}$ is an optimal $\bm{u}$ of the semi-infinite program~\eqref{semi} that computes $(T\tilde{v}_{k-1}) (\bm{x})$;

\item (Policy evaluation) Compute
\[
\tilde{v}_{k} := (T^{\pi_{k}})^{M_k} \tilde{v}_{k-1}
\]
by solving the finite-dimensional optimization problems in Proposition~\ref{prop:finite};
 
\item If the stopping criterion is met, then stop and set $\tilde{\pi} := \pi_{k}$. Otherwise, set $k \leftarrow k+1$ and go to Step 2);

\end{enumerate}
Note that the modified PI algorithm approximately evaluates the performance of a policy $\pi_{k}$ as $\tilde{v}_{k}$ instead of finding the exact fixed point of $T^{\pi_{k}}$.
Concrete choices of the \emph{order sequence} $\{ M_k \}$ 
are discussed in \cite{Puterman2014}.
However, 
for any choice of $\{ M_k \}$, 
the modified PI algorithm converges under Assumption~\ref{ass_sc}~\cite{Bertsekas2012}:
\[
\lim_{k \to \infty} \| \tilde{v}_k -  v^\star \|_\xi = 0.
\]
As in the case of VI, 
we can estimate the number of iterations required for obtaining an $\epsilon$-optimal policy.
\begin{proposition}
Suppose that Assumption~\ref{ass_sc} holds. 
Let $r \in \mathbb{R}$ be a positive constant such that
\[
\| \tilde{v}_0 - T \tilde{v}_0 \|_\xi \leq r.
\]
We assume that given $\epsilon > 0$, the total number of iterations, $k$,  in the modified PI algorithm satisfies
\[
k \tau^k < \frac{(1-\tau)^2}{2 r} \epsilon,
\]
where $\tau \in (0,1)$ is the constant defined in Lemma~\ref{lem:cont2}.
Then, $\tilde{\pi} := \pi_k$ obtained by the modified PI algorithm is an $\epsilon$-optimal policy, i.e.,
\[
\| v^{\tilde{\pi}} - v^\star \|_\xi < \epsilon.
\]
\end{proposition}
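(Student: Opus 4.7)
The plan is to adapt the VI-based argument of Proposition~\ref{prop:est1} to the optimistic PI setting, proceeding in two stages. In the first stage I relate the suboptimality of the returned policy to the accuracy of the final approximate value function, and in the second stage I invoke a standard optimistic PI convergence rate to bound that accuracy in terms of $k\tau^k$.

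For the first stage, the key observation is that $\tilde{\pi} = \pi_k$ is greedy with respect to $\tilde{v}_{k-1}$, so that $T^{\pi_k}\tilde{v}_{k-1} = T\tilde{v}_{k-1}$. Writing
\begin{equation*}
\|v^{\tilde{\pi}} - v^\star\|_\xi = \|T^{\pi_k}v^{\tilde{\pi}} - T v^\star\|_\xi \leq \|T^{\pi_k}v^{\tilde{\pi}} - T^{\pi_k}\tilde{v}_{k-1}\|_\xi + \|T\tilde{v}_{k-1} - T v^\star\|_\xi
\end{equation*}
and applying the $\tau$-contraction of $T^{\pi_k}$ (Lemma~\ref{lem:cont2}) and of $T$ (Lemma~\ref{lem:contraction}), combined with the triangle inequality $\|v^{\tilde{\pi}} - \tilde{v}_{k-1}\|_\xi \leq \|v^{\tilde{\pi}} - v^\star\|_\xi + \|\tilde{v}_{k-1} - v^\star\|_\xi$ and a rearrangement, produces
\begin{equation*}
\|v^{\tilde{\pi}} - v^\star\|_\xi \leq \frac{2\tau}{1-\tau}\,\|\tilde{v}_{k-1} - v^\star\|_\xi,
\end{equation*}
which is the direct analog of~\eqref{bound1}.

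For the second stage, I would establish a convergence estimate of the form $\|\tilde{v}_{k-1} - v^\star\|_\xi \leq r k\tau^{k-1}/(1-\tau)$. The base case follows from the standard contraction identity $\|\tilde{v}_0 - v^\star\|_\xi \leq \|\tilde{v}_0 - T\tilde{v}_0\|_\xi/(1-\tau) \leq r/(1-\tau)$, obtained by inserting $T\tilde{v}_0$ and using $v^\star = Tv^\star$ together with Lemma~\ref{lem:contraction}. The inductive step requires tracking how the Bellman residual $\|T\tilde{v}_m - \tilde{v}_m\|_\xi$ evolves through the $M_m$ inner applications of $T^{\pi_m}$ in the policy evaluation step followed by the greedy policy improvement step, using the monotonicity and $\tau$-contraction of both $T^{\pi_m}$ and $T$ (Lemmas~\ref{lem:contraction} and~\ref{lem:cont2}); this parallels the classical optimistic PI analysis in~\cite{Bertsekas2012}, suitably transposed to the weighted sup-norm $\|\cdot\|_\xi$. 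Substituting the resulting estimate into the first-stage bound gives $\|v^{\tilde{\pi}} - v^\star\|_\xi \leq 2 r k\tau^k/(1-\tau)^2$, and the hypothesis $k\tau^k < (1-\tau)^2\epsilon/(2r)$ then yields $\|v^{\tilde{\pi}} - v^\star\|_\xi < \epsilon$.

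The main obstacle is the inductive argument in the second stage: two distinct error sources---truncated policy evaluation (damped by $\tau^{M_m}$) and imperfect policy improvement---must be propagated together across the $k$ outer iterations, and the bookkeeping has to be carried out under the weighted norm $\|\cdot\|_\xi$ rather than the uniform norm standardly used in the literature. Once that estimate is in hand, the remaining manipulations are a direct substitution that parallels the last few lines of the proof of Proposition~\ref{prop:est1}.
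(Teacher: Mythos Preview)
Your proposal is correct and follows essentially the same approach as the paper. The paper's proof invokes \cite[Lemma~2.5.4]{Bertsekas2012} directly for the second-stage estimate $\|\tilde{v}_{k-1} - v^\star\|_\xi \leq r k\tau^{k-1}/(1-\tau)$ rather than re-deriving it, but otherwise the two-stage structure, the greedy-policy bound, and the final combination are identical to what you outline.
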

\begin{proof}
According to Lemma~\ref{lem:cont2} and Theorem~\ref{thm:ds}, we have
$v^{\tilde{\pi}}, \tilde{v}_k, v^\star \in \mathbb{B}_\xi (\mathcal{X})$. By \cite[Lemma 2.5.4]{Bertsekas2012}, we obtain that
\[
 \tilde{v}_{k-1} - \frac{k \tau^{k-1}}{1-\tau} r \xi \leq v^\star
 \leq \tilde{v}_{k-1} + \frac{\tau^{k-1}}{1-\tau} r \xi,
\]
which implies that 
\begin{equation}\label{ineq10}
\| \tilde{v}_{k-1} - v^\star \|_\xi \leq \frac{k \tau^{k-1}}{1-\tau} r.
\end{equation}
On the other hand,  $\tilde{\pi} = \pi_k$ is a greedy policy when the value function is chosen as $\tilde{v}_{k-1}$. As in the proof of Proposition~\ref{prop:est1}, we have
$\| v^{\tilde{\pi}} - v^\star \|_\xi \leq \frac{2\tau}{1-\tau} \| \tilde{v}_{k-1} - v^\star \|_\xi$. Thus, by \eqref{ineq10},
\[
\| v^{\tilde{\pi}} - v^\star \|_\xi 
 \leq \frac{2 k \tau^k}{(1-\tau)^2}r < \epsilon,
\]
where the second inequality holds due to the proposed choice of $k$.
\end{proof}


\subsection{The Worst-Case Distribution Policy}


Given a policy $\pi \in \Pi^{DS}$ (for Player I), the worst-case distribution policy (for Player II) can be found by solving 
\[
\sup_{\gamma \in \Gamma} J_{\bm{x}} (\pi, \gamma),
\]
which is an optimal control problem. By the dynamic programming principle, the worst-case value function $v^\pi$, defined by \eqref{worst_value}, is the unique solution to the following Bellman equation:
\[
v^\pi = T^\pi v^\pi
\]
under Assumption~\ref{ass_sc}.
The worst-case value function $v^\pi$ can be computed, for example, via value iteration. 
\emph{Given $v^\pi$, how can we characterize the worst-case distribution policy?}
The following proposition indicates that, 
if the optimization problem involved in $(T^\pi v^\pi)(\bm{x})$ admits an optimal solution for all $\bm{x} \in \mathcal{X}$, then 
there exists an optimal policy for Player II, which is deterministic and stationary, and it generates a finitely-supported worst-case distribution.

\begin{proposition}[Worst-case distribution policy]\label{prop:wd}
Suppose that Assumption~\ref{ass_sc} holds, and that given $\pi \in \Pi^{DS}$
\[
 \sup_{\bm{\mu} \in \mathcal{D}} \bigg [ c (\bm{x}, \bm{u})
+ \alpha \int_{\mathcal{W}} v^{\pi}(f (\bm{x}, \pi(\bm{x}), w)) \mathrm{d}\bm{\mu}(w) \bigg ]
\]
admits an optimal solution for any $\bm{x} \in \mathcal{X}$.
Then, 
the deterministic stationary policy $\gamma^\pi: \mathcal{X} \to \mathcal{D}$ defined by
\[
\gamma^\pi (\bm{x}) := \frac{1}{2N} \sum_{i = 1}^N \big (\delta_{\underline{w}_{\bm{x}}^{\pi, (i)}}   +\delta_{\overline{w}_{\bm{x}}^{\pi, (i)}}  \big )\quad \forall \bm{x} \in \mathcal{X}
\]
is an optimal policy (for Player II) that generates a worst-case distribution for each state $\bm{x} \in \mathcal{X}$, where
$w_{\bm{x}}^\pi:= (\underline{w}_{\bm{x}}^{\pi, (1)}, \ldots, \underline{w}_{\bm{x}}^{\pi, (N)}, \overline{w}_{\bm{x}}^{\pi, (1)}, \ldots, \overline{w}_{\bm{x}}^{\pi, (N)})$ is an optimal solution of the maximization problem in Proposition~\ref{prop:finite} with $v := v^{\pi}$.
\end{proposition}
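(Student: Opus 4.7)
The plan is to lift the pointwise finite-dimensional characterisation of Proposition~\ref{prop:finite} (applied to $v:=v^\pi$) into a measurable stochastic kernel and then close the argument using the Bellman fixed-point equation $v^\pi=T^\pi v^\pi$. First, for each $\bm{x}\in\mathcal{X}$ the hypothesis guarantees that the infinite-dimensional inner supremum defining $(T^\pi v^\pi)(\bm{x})$ is attained; by Proposition~\ref{prop:finite} this is equivalent to attainment in the finite-dimensional program over $B$, yielding atoms $(\underline{w}^{\pi,(1)}_{\bm{x}},\ldots,\overline{w}^{\pi,(N)}_{\bm{x}})$ together with simplex weights, from which I form the discrete measure $\gamma^\pi(\bm{x})\in\mathcal{P}(\mathcal{W})$ in the statement. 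Feasibility $\gamma^\pi(\bm{x})\in\mathcal{D}$ is immediate: the transport plan pairing each $\hat{w}^{(i)}$ with the corresponding atoms transports mass at Wasserstein cost bounded by $\theta^p$, precisely the constraint that defines $B$, so $W_p(\gamma^\pi(\bm{x}),\nu_N)\leq\theta$.

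The main obstacle is showing that $\bm{x}\mapsto\gamma^\pi(\bm{x})$ can be chosen measurably, so that it is a legitimate stochastic kernel and a bona fide element of $\Gamma$. I would apply a measurable maximum theorem along the lines of \cite[Lemma~3.3]{Gonzalez2003}, but to the maximisation side: the objective in Proposition~\ref{prop:finite} is upper semicontinuous jointly in $(\bm{x},w,q)$, since $v^\pi\in\mathbb{B}_{lsc}(\mathcal{X})$ implies $-v^\pi$ is upper semicontinuous, while Assumption~\ref{ass_sc} guarantees continuity of $(\bm{x},\bm{u},w)\mapsto f(\bm{x},\bm{u},w)$-composed integrands in the appropriate sense, and the feasible set $B$ does not depend on $\bm{x}$ and is a closed subset of a compactifiable region after exploiting the $\xi$-growth control in Assumption~\ref{ass_sc}.1). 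A Kuratowski--Ryll-Nardzewski-type selection then yields measurable selectors $\bm{x}\mapsto(\underline{w}^{\pi,(i)}_{\bm{x}},\overline{w}^{\pi,(i)}_{\bm{x}},q^\pi_{\bm{x}})$, and composing with $w\mapsto\delta_w$ and finite convex combination (continuous in the weak topology on $\mathcal{P}(\mathcal{W})$) gives measurability of $\gamma^\pi$.

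To conclude optimality, I would use the Bellman equation together with the contraction in Lemma~\ref{lem:cont2}. By construction of $\gamma^\pi(\bm{x})$ as a maximiser,
\[
c(\bm{x},\pi(\bm{x}))+\alpha\int_{\mathcal{W}} v^\pi(f(\bm{x},\pi(\bm{x}),w))\,\gamma^\pi(\bm{x})(\mathrm{d}w) \;=\; (T^\pi v^\pi)(\bm{x}) \;=\; v^\pi(\bm{x}),
\]
so the constant-in-time policy $\gamma^\pi$ satisfies $J_{\bm{x}}(\pi,\gamma^\pi)=v^\pi(\bm{x})$ via the one-step dynamic programming recursion, using dominated convergence justified by the $\xi$-weighted bound and $\lim_{t\to\infty}\alpha^t\mathbb{E}^{\pi,\gamma^\pi}[v^\pi(x_t)]=0$ from Lemma~\ref{lem:ms}. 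Since $v^\pi(\bm{x})=\sup_{\gamma\in\Gamma}J_{\bm{x}}(\pi,\gamma)$ by definition, $\gamma^\pi$ attains the supremum and is therefore optimal. The uniqueness of the fixed point of $T^\pi$ in $\mathbb{B}_\xi(\mathcal{X})$ (Lemma~\ref{lem:cont2}) ensures that the identification $J_{\bm{x}}(\pi,\gamma^\pi)=v^\pi(\bm{x})$ is unambiguous, completing the argument.
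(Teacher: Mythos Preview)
Your overall strategy---extracting a pointwise maximiser from Proposition~\ref{prop:finite} with $v=v^\pi$, checking $\gamma^\pi(\bm{x})\in\mathcal{D}$ via the obvious transport plan, and then iterating the one-step identity $c(\bm{x},\pi(\bm{x}))+\alpha\int v^\pi\,\mathrm{d}\gamma^\pi(\bm{x})=(T^\pi v^\pi)(\bm{x})=v^\pi(\bm{x})$ together with the transversality condition to obtain $J_{\bm{x}}(\pi,\gamma^\pi)=v^\pi(\bm{x})$---is sound and is precisely the dynamic-programming argument the paper gestures at. The paper's own justification is far terser: it simply invokes ``the dynamic programming principle'' for the existence of a deterministic stationary optimiser for Player~II and cites \cite[Corollary~1]{Gao2016} for the finitely supported structure of $\gamma^\pi(\bm{x})$, without spelling out feasibility, measurability, or the fixed-point iteration that you provide.

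Your measurability paragraph, however, contains a genuine gap. You assert that the objective in Proposition~\ref{prop:finite} is \emph{upper} semicontinuous in $(\bm{x},w,q)$ ``since $v^\pi\in\mathbb{B}_{lsc}(\mathcal{X})$ implies $-v^\pi$ is upper semicontinuous.'' But the objective involves $+v^\pi\circ f$, not $-v^\pi\circ f$; lower semicontinuity of $v^\pi$ makes the objective \emph{lower} semicontinuous in the maximising variables, which is the wrong direction for a measurable-maximum selector of the type in \cite[Lemma~3.3]{Gonzalez2003} (that lemma is tailored to the minimising player precisely because the cost and value are l.s.c.). In addition, $B\subseteq\mathcal{W}^{2N}\times\Delta$ need not be compact when $\mathcal{W}$ is unbounded, and ``compactifiable via the $\xi$-growth control'' is not yet an argument. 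The paper sidesteps this issue entirely by outsourcing the structural claim to \cite{Gao2016}; if you want a self-contained argument you will need either extra regularity on $v^\pi$ (e.g.\ continuity rather than mere l.s.c.) or a selection theorem that does not rely on upper semicontinuity of the objective.
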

The existence of  an optimal policy, which is deterministic and stationary, follows from the dynamic programming principle when the assumptions in the proposition hold.
Thus, it is sufficient for Player II to use the same worst-case distribution for all stages. 
The structure of $\gamma^\pi (\bm{x})$ is obtained by applying \cite[Corollary 1]{Gao2016} to the maximization problem in the proposition.
Note that the worst-case distribution of this form is consistent with the discussion below Proposition~\ref{prop:finite}.
By using \cite[Corollary 2]{Gao2016}, we have the following sharper result
of characterizing the worst-case distribution with $N+1$ atoms: if the assumptions in Proposition~\ref{prop:wd} hold,
one of the worst-case distribution policies has the form 
\[
\gamma^\pi (\bm{x}) := \frac{1}{N} \sum_{i \neq i_0} \delta_{w_{\bm{x}}^{\pi, (i)}} + \frac{p_0}{N} \delta_{\underline{w}_{\bm{x}}^{\pi, (i_0)}}  + \frac{1- p_0}{N} \delta_{\overline{w}_{\bm{x}}^{\pi, (i_0)}},
\]
where $i_0 \in\{1, \ldots, N\}$, $p_0 \in [0,1]$, $\underline{w}_{\bm{x}}^{\pi, (i_0)}, \overline{w}_{\bm{x}}^{\pi, (i_0)} \in \argmin_{w \in \mathcal{W}} \{ \lambda^\star d (w, \hat{w}^{(i_0)})^p - \alpha v (f(\bm{x}, \pi (\bm{x}), w)) \}$, and ${w}_{\bm{x}}^{\pi, (i)} \in\argmin_{w \in \mathcal{W}} \{ \lambda^\star d (w, \hat{w}^{(i)})^p - \alpha v (f(\bm{x}, \pi (\bm{x}), w)) \}$ for all $i \neq i_0$. Here, $\lambda^\star$ is a dual minimizer, which must exist when the worst-case distribution exists~\cite[Corollary 1]{Gao2016}.

It is worth mentioning that Kantorovich duality and DP play a critical role in obtaining
all the results in this section. 
Based on the reformulation results and analytical properties of DR-control problems,
we demonstrate their utility in the following sections.

\section{Out-of-Sample Performance Guarantee}
\label{sec:perf}

A potential defect of the SAA-control formulation \eqref{saa_opt} is that 
its optimal policy may not perform well if a testing dataset of $w_t$ is different from the training dataset $\{\hat{w}^{(1)}, \ldots, \hat{w}^{(N)}\}$.
This issue occurs even when the testing and training datasets are sampled from the same distribution.
Such a degradation of the optimal decisions in out-of-sample tests is often called the \emph{optimizer's curse} in the  literature of decision analysis~\cite{Smith2006}. 
We show that an optimal distributionally robust policy can alleviate this issue and provide a guaranteed \emph{out-of-sample performance} if the  radius $\theta$ of Wasserstein ambiguity set is carefully determined.

Let ${\pi}_{\hat{w}}^\star \in \Pi$ denote an optimal distributionally robust policy obtained by using the training dataset $\hat{w} := \{\hat{w}^{(1)}, \ldots, \hat{w}^{(N)}\}$ of $N$ samples.
The out-of-sample performance of $\pi^\star$ is measured as
\begin{equation}\label{osp}
\mathbb{E}^{\pi_{\hat{w}}^\star}_{w_t \sim \mu} \bigg [ \sum_{t=0}^\infty \alpha^t c (x_t, u_t) \mid x_0 = \bm{x} \bigg ],
\end{equation}
which represents the expected total cost under a new sample that is generated (according to $\mu$) independent of the training dataset. 
Unfortunately, the out-of-sample performance cannot be precisely computed because the true distribution $\mu$ is unknown.
Thus, instead, we aim at establishing a \emph{probabilistic out-of-sample performance guarantee} of the form:
\begin{equation}\label{ppg}
\begin{split}
\mu^N \bigg \{
\hat{w}  \mid \: \mathbb{E}^{\pi_{\hat{w}}^\star}_{w_t \sim \mu} \bigg [ \sum_{t=0}^\infty \alpha^t c (x_t, u_t) \mid &\: x_0 = \bm{x} \bigg ]  \leq v_{\hat{w}}^\star (\bm{x}) \; \forall \bm{x} \in \mathcal{X} \bigg \} \geq 1-\beta,
\end{split}
\end{equation}
where $v^\star_{\hat{w}}$ denotes the optimal value function of the DR-control problem with the training dataset $\hat{w} := \{\hat{w}^{(1)}, \ldots, \hat{w}^{(N)}\}$, 
 and $\beta \in (0,1)$.\footnote{Here, $\hat{w}$, $\pi_{\hat{w}}^\star$ and $v_{\hat{w}}^\star$ are viewed as random objects.}
The inequality represents a bound $(1-\beta)$ on the probability that the expected cost incurred by $\pi^\star$ is no greater than the optimal value function. Note that the probability and the expected cost are  evaluated with respect to the true distribution $\mu$.
Thus, this inequality provides a probabilistic bound on the performance of $\pi^\star$ evaluated with unseen test samples drawn from $\mu$. 
Here, $v_{\hat{w}}^\star$, which depends on $\theta$, plays the role of a certificate for the out-of-sample performance.

Our goal is to identify conditions on the radius $\theta$ under which an optimal distributionally robust policy provides the probabilistic performance guarantee.
We begin by imposing the following assumption on the true distribution $\mu$:
\begin{assumption}[Light tail]\label{lt}
There exists a positive constant $q > p$ such that
\[
\rho := \int_{\mathcal{W}} \exp( \| w \|^q ) \: \mathrm{d} \mu (w) < + \infty.
\]
\end{assumption}
This assumption implies that the tail of $\mu$ decays exponentially. 
Under this condition, 
the following  measure concentration inequality holds:
\begin{theorem}[Measure concentration, Theorem 2 in \cite{Fournier2015}]\label{thm:mc}
Suppose that Assumption \ref{lt} holds. 
Let
\[
\nu_N := \frac{1}{N} \sum_{i=1}^N \delta_{\hat{w}^{(i)}}.
\]
Then,
\begin{equation} \nonumber
\begin{split}
&\mu^N \big \{\hat{w} \mid 
W_p (\mu, \nu_{N}) \geq \theta
\big \} \leq c_1 \big [ b_1(N, \theta) \bold{1}_{\{\theta\leq 1\}} + b_2(N, \theta) \bold{1}_{\{\theta > 1\}} \big ],
\end{split}
\end{equation}
where
\[
b_1 (N, \theta) := \left \{
\begin{array}{ll}
\exp (-c_2 N \theta^2) & \mbox{if } p > l/2\\
\exp  (-c_2 N (\frac{\theta}{\log(2+1/\theta)})^2  ) & \mbox{if } p = l/2\\
\exp (-c_2 N \theta^{l/p}   ) &\mbox{otherwise},
\end{array}
\right.
\]
and
\[
b_2 (N, \theta) := 
\exp ( -c_2 N \theta^{q/p} ). 
\]
Here,
$c_1, c_2$ are positive constants depending only on $l$, $q$ and $\rho$.
\end{theorem}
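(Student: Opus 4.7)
The result is a measure-concentration bound for the empirical Wasserstein distance, so the plan is to follow the standard Fournier--Guillin strategy of combining a hierarchical (dyadic) coupling estimate with exponential concentration of empirical frequencies on a partition, and then splitting into a bulk regime (handled by the dimension-dependent empirical-measure rate) and a tail regime (handled by the light-tail assumption).

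First I would fix a nested sequence of partitions $\{\mathcal{P}_n\}_{n\geq 0}$ of $\mathcal{W}$ such that each cell of $\mathcal{P}_n$ has diameter $\lesssim 2^{-n}$, localized to a large ball $B_R$ of radius $R$ to be chosen later. Using a piecewise-constant transport plan that matches mass cell-by-cell within each scale, one obtains the deterministic upper bound
\begin{equation}\nonumber
W_p(\mu,\nu_N)^p \;\leq\; C\,R^{p}\,\sum_{n=0}^{\infty} 2^{-np}\sum_{F\in\mathcal{P}_n}\bigl|\mu(F)-\nu_N(F)\bigr|
\;+\; \text{(mass outside $B_R$)}.
\end{equation}
This reduces the problem to controlling, for each scale $n$, the total variation of the empirical vs.\ true measure on the (finite) partition $\mathcal{P}_n$, plus a tail term $\mu(\mathcal{W}\setminus B_R) + \nu_N(\mathcal{W}\setminus B_R)$.

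Next I would apply an exponential concentration inequality (Bretagnolle--Huber--Carol or Bernstein, depending on the cardinality of $\mathcal{P}_n$) to each $\sum_{F \in \mathcal{P}_n}|\mu(F)-\nu_N(F)|$ and optimize over the choice of the truncation scale (the largest $n_0$ we keep) and the radius $R$. The number of cells at scale $n$ grows like $(R 2^n)^l$, which is where the three regimes in $b_1$ arise: balancing $N\theta^2$ against $(R 2^{n_0})^l$ produces the exponent $\min(2, l/p)$ and the logarithmic correction at the critical exponent $p=l/2$. For the tail term, Assumption~\ref{lt} gives $\mu(\|w\|>R)\leq \rho e^{-R^q}$, and a Markov/Chernoff argument controls $\nu_N(\|w\|>R)$ with the same exponent; optimizing $R$ as a function of $\theta$ when $\theta>1$ yields the stretched-exponential bound $\exp(-c_2 N\theta^{q/p})$ appearing in $b_2$.

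Finally I would combine all the scale-wise and tail estimates by a union bound, absorbing multiplicative factors into the constants $c_1,c_2$ that depend only on $l$, $q$, and $\rho$. The main obstacle is the careful bookkeeping in the dyadic decomposition: one must allocate a geometrically decreasing share of the target deviation $\theta^p$ to each scale so that the union bound does not blow up, and simultaneously choose $n_0$ and $R$ as explicit functions of $N$ and $\theta$ that equate the dominant scale with the tail. This balancing is what produces the piecewise rate with the indicator split at $\theta=1$, and it is also the step where the three sub-cases of $b_1$ are separated according to whether the Wasserstein order $p$ dominates, matches, or is dominated by the dimensional rate $l/2$. Since the full argument is given in \cite{Fournier2015}, here it suffices to invoke that theorem; in the paper the result is used only as a black box feeding into the radius selection for~\eqref{ppg}.
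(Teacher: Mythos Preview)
Your proposal is correct in its final sentence: the paper does not prove Theorem~\ref{thm:mc} at all but simply states it and attributes it to \cite{Fournier2015}, using it as a black-box input to Theorem~\ref{thm:osg}. The dyadic-coupling sketch you give is a faithful outline of the Fournier--Guillin argument, but it goes well beyond what the paper does; for the purposes of this paper, a one-line citation is all that is needed.
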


This theorem provides
an upper-bound of the probability that the true distribution $\mu$ lies outside of the Wasserstein ambiguity set $\mathcal{D}$.
The measure concentration inequality provides a systematic means to determine the radius  for $\mathcal{D}$ to contain the true distribution $\mu$ with probability no less than $(1-\beta)$.
As shown in the following theorem, 
the contraction property of Bellman operators enables us to extend the single-stage out-of-performance guarantee to its multi-stage counterpart with no additional requirement on $\theta$.

\begin{theorem}[Out-of-sample performance guarantee]\label{thm:osg}
Suppose that Assumptions~\ref{ass_sc} and \ref{lt} hold.
Let $\pi_{\hat{w}}^\star$ and $v_{\hat{w}}^\star$ denote an optimal policy and the optimal value function of the DR-control problem \eqref{dr_opt} with
the training dataset $\hat{w} := \{\hat{w}^{(1)}, \ldots, \hat{w}^{(N)}\}$ and
 the following Wasserstein ball radius:\footnote{This choice includes the radius proposed in~\cite{Esfahani2015} in the single-stage setting as a special case (when $p = 1$ and $l \neq 2$).}
\begin{equation} \nonumber
\begin{split}
\theta (N, \beta) :=
& \left \{
\begin{array}{ll}
\big [\frac{1}{Nc_2} \log (\frac{c_1}{\beta} ) \big ]^{{p}/{q}} & \mbox{if } N < \frac{1}{c_2} \log (\frac{c_1}{\beta}) \\
\big [\frac{1}{Nc_2} \log (\frac{c_1}{\beta} ) \big]^{{1}/{2}} 
& \mbox{if } N \geq \frac{1}{c_2} \log (\frac{c_1}{\beta}) \wedge  p>\frac{l}{2}\\
\big [\frac{1}{Nc_2} \log (\frac{c_1}{\beta} ) \big]^{{p}/{l}} 
& \mbox{if } N \geq \frac{1}{c_2} \log (\frac{c_1}{\beta})  \wedge  p<\frac{l}{2}\\
\bar{\theta}
& \mbox{if } N \geq \frac{(\log 3)^2}{c_2} \log (\frac{c_1}{\beta})  \wedge  p=\frac{l}{2},
\end{array}
\right.
\end{split}
\end{equation}
where $\bar{\theta}$ satisfies
$\frac{\bar{\theta}}{\log (2 + 1/\bar{\theta})}  = [
\frac{1}{Nc_2} \log  (\frac{c_1}{\beta}  )
]^{{1/2}}$,
and
$c_1, c_2$ are the positive constants in Theorem~\ref{thm:mc}.\footnote{The constants $c_1$ and $c_2$ in Theorem~\ref{thm:mc}
can be calculated using the proof of Theorem~2 in \cite{Fournier2015}.
However, this calculation is often conservative and thus results in a smaller radius $\theta (N, \beta)$ than necessary.
Bootstrapping and cross-validation methods can be used to reduce the conservativeness in the \emph{a priori} bound $\theta (N, \beta)$, as advocated and demonstrated in \cite{Esfahani2015}.}
Then, the probabilistic out-of-sample performance guarantee \eqref{ppg} holds.
\end{theorem}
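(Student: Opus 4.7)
My plan is to prove the theorem by decomposing the sample space of the training data into the event on which the true distribution is captured by the Wasserstein ball and its complement. On the former event, I will show \emph{deterministically} that the infinite-horizon expected cost under the true distribution $\mu$ is dominated by the DR value function, and then bound the probability of the former event using the measure concentration inequality.

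\textbf{Step 1: probability bound on the good event.} Define the good event $E := \{\hat{w}: W_p(\mu, \nu_N) \leq \theta(N,\beta)\}$. I would verify, case by case, that the radius $\theta(N,\beta)$ is precisely the threshold for which the right-hand side of Theorem~\ref{thm:mc} equals $\beta$. For instance, when $N \geq \frac{1}{c_2}\log(c_1/\beta)$ and $p > l/2$, plugging $\theta = \theta(N,\beta) = [\frac{1}{Nc_2}\log(c_1/\beta)]^{1/2}$ into $b_1$ gives $c_1\exp(-c_2 N \theta^2) = \beta$, and the side condition $\theta \leq 1$ is equivalent to the sample-size condition; the other three cases are analogous (the $p = l/2$ case being the only one requiring the implicit definition $\bar{\theta}/\log(2+1/\bar{\theta}) = [\frac{1}{Nc_2}\log(c_1/\beta)]^{1/2}$ to equate the $b_1$-bound with $\beta$). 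Consequently $\mu^N(E) \geq 1-\beta$.

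\textbf{Step 2: on $E$, bound the out-of-sample cost using contraction.} Fix any $\hat{w} \in E$, so that $\mu \in \mathcal{D}$. Let $V_\mu(\bm{x})$ denote the out-of-sample cost \eqref{osp} with $\pi = \pi^\star_{\hat{w}}$. Because the $w_t$'s are i.i.d.~under $\mu$, standard DP for the controlled Markov process shows that $V_\mu$ is the unique fixed point in $\mathbb{B}_\xi(\mathcal{X})$ of the ``nominal'' Bellman operator $T^{\pi^\star,\mu}$ defined by $(T^{\pi^\star,\mu}v)(\bm{x}) := c(\bm{x},\pi^\star(\bm{x})) + \alpha\int v(f(\bm{x},\pi^\star(\bm{x}),w))\,\mu(\mathrm{d}w)$; this operator is a $\tau$-contraction by the same argument as in Lemma~\ref{lem:cont2}. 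Since $\mu \in \mathcal{D}$, the pointwise inequality $T^{\pi^\star,\mu}v \leq T^{\pi^\star}v$ holds for every $v \in \mathbb{B}_\xi(\mathcal{X})$. Iterating from an arbitrary $v_0 \in \mathbb{B}_\xi(\mathcal{X})$ and using monotonicity of both operators (Lemma~\ref{lem:cont2}) yields $(T^{\pi^\star,\mu})^n v_0 \leq (T^{\pi^\star})^n v_0$ for every $n$. Passing to the limit, the contraction property forces both sides to converge in $\|\cdot\|_\xi$ to their respective fixed points, so $V_\mu \leq v^{\pi^\star}_{\hat{w}}$. Finally, Theorem~\ref{thm:ds} identifies $v^{\pi^\star}_{\hat{w}} = v^\star_{\hat{w}}$.

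\textbf{Step 3: combine.} Steps~1 and 2 imply that on $E$ the desired pointwise inequality $V_\mu(\bm{x}) \leq v^\star_{\hat{w}}(\bm{x})$ holds for every $\bm{x} \in \mathcal{X}$ simultaneously, and $\mu^N(E) \geq 1-\beta$, which is exactly \eqref{ppg}. The conceptual point worth emphasizing is that the confidence level $\beta$ is consumed \emph{only once}, in the single-stage measure concentration step, because the monotone-contraction argument lifts the single-stage pointwise inequality $T^{\pi^\star,\mu} \leq T^{\pi^\star}$ to an infinite-horizon inequality between fixed points without any union bound over time.

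\textbf{Expected main obstacle.} The conceptual argument is light; the technical care lies in Step~1, where each of the four cases in the piecewise definition of $\theta(N,\beta)$ must be matched with the correct branch of the concentration bound and the correct side condition on $\theta$. The $p = l/2$ branch, in which $\bar{\theta}$ is defined implicitly, additionally requires checking that the sample-size threshold $(\log 3)^2/c_2 \cdot \log(c_1/\beta)$ is precisely what guarantees $\bar{\theta} \leq 1$, by monotonicity of $\bar{\theta} \mapsto \bar{\theta}/\log(2+1/\bar{\theta})$. A minor subtlety in Step~2 is verifying that $V_\mu \in \mathbb{B}_\xi(\mathcal{X})$ so that the contraction fixed-point argument applies; this follows from $|c| \leq b\xi$ together with the growth condition $\xi'(\bm{x},\bm{u}) \leq \beta \xi(\bm{x})$ in Assumption~\ref{ass_sc}, which bounds $|V_\mu(\bm{x})|$ by $b\xi(\bm{x})/(1-\tau)$.
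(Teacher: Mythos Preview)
Your proposal is correct and follows essentially the same strategy as the paper: establish $\mu^N(E)\ge 1-\beta$ via the concentration inequality, and on $E$ use monotonicity and contraction to pass from the single-stage domination $T^{\pi^\star,\mu}\le T^{\pi^\star}$ to the infinite-horizon bound. The only technical difference is in how the fixed-point comparison is organized. The paper starts the iteration at the specific point $v^\star_{\hat w}$ and uses only the one operator $T^\star$ (your $T^{\pi^\star,\mu}$): since $T^\star v^\star_{\hat w}\le T v^\star_{\hat w}=v^\star_{\hat w}$, monotonicity of $T^\star$ gives $(T^\star)^k v^\star_{\hat w}\le v^\star_{\hat w}$ for every $k$ by a one-line induction, and the limit yields $V_\mu\le v^\star_{\hat w}$ directly. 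You instead iterate both $T^{\pi^\star,\mu}$ and $T^{\pi^\star}$ from an arbitrary $v_0$, compare the two limits, and then invoke Theorem~\ref{thm:ds} to identify $v^{\pi^\star}_{\hat w}=v^\star_{\hat w}$. Both arguments are valid; the paper's version is marginally more economical since it avoids tracking a second iterated sequence and the extra appeal to Theorem~\ref{thm:ds}, while your version makes more explicit that the comparison really takes place at the level of $T^{\pi^\star}$ rather than $T$.
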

\begin{proof}
Using Theorem~\ref{thm:mc}, we can confirm that our choice of $\theta$ provides the following probabilistic guarantee:
\begin{equation}\label{g1}
\mu^N \big \{
\hat{w}\mid  W_p(\mu, \nu_{N}) \leq \theta (N, \beta)
\big \}\geq 1-\beta.
\end{equation}
Define an operator $T^\star: \mathbb{B}_{\xi}(\mathcal{X}) \to \mathbb{B}_{\xi}(\mathcal{X})$ as
$(T^\star v)(\bm{x}) := \mathbb{E}_{\mu} [ c (\bm{x}, \pi_{\hat{w}}^\star(\bm{x})) + \alpha v(f(\bm{x}, \pi_{\hat{w}}^\star (\bm{x}), w))]$
for all $\bm{x} \in \mathcal{X}$.
It follows from \eqref{g1} that the following single-stage guarantee holds: 
\begin{equation}\label{ssg}
\mu^N \big \{
\hat{w} \mid  (T^\star {v_{\hat{w}}^\star})(\bm{x}) \leq (T {v_{\hat{w}}^\star}) (\bm{x})
\big \}\geq 1-\beta
\end{equation}
given any fixed $\bm{x} \in \mathcal{X}$.
It is straightforward to check under Assumption~\ref{ass_sc} that $T^\star$ is a monotone contraction mapping.

We now show that if $\mu \in \mathcal{D}$, then $(T^\star)^k { v_{\hat{w}}^\star} \leq   { v_{\hat{w}}^\star}$ for any $k=1, 2, \ldots$ using mathematical induction.
For $k=1$, we have $T^\star { v_{\hat{w}}^\star} \leq T{ v_{\hat{w}}^\star} = { v_{\hat{w}}^\star}$ by the minimax definition of $T$.
Suppose now that the induction hypothesis holds for some $k$. 
By the monotonicity of $T^\star$ and the definition of $T$, we have
\[
T^\star((T^\star)^k { v_{\hat{w}}^\star}) \leq T^\star   { v_{\hat{w}}^\star} \leq  T{ v_{\hat{w}}^\star} = { v_{\hat{w}}^\star},
\]
and thus the induction hypothesis is valid for $k+1$.

We now notice that
\[
\lim_{k\to \infty} ((T^\star)^k { v_{\hat{w}}^\star}) (\bm{x}) = \mathbb{E}_{w_t \sim \mu} \bigg [ \sum_{t=0}^\infty \alpha^t c (x_t, \pi_{\hat{w}}^\star(x_t)) \mid x_0 = \bm{x} \bigg ]
\]
since $T^\star$ is a contraction mapping under Assumption~\ref{ass_sc}.
Therefore, if $\mu \in \mathcal{D}$, then
\[
\mathbb{E}_{w_t \sim \mu} \bigg [ \sum_{t=0}^\infty \alpha^t c (x_t, \pi_{\hat{w}}^\star(x_t)) \mid x_0 = \bm{x} \bigg ]
 \leq v_{\hat{w}}^\star (\bm{x}) \quad \forall \bm{x} \in \mathcal{X}.
\]
By \eqref{g1},
 the probabilistic performance guarantee holds as desired.
\end{proof}
\begin{remark}
Note that the contraction property of $T$ and $T^\star$ plays a critical role in connecting the single-stage performance guarantee~\eqref{ssg} to
the multi-stage guarantee~\eqref{ppg} in a way that is independent of the number of stages.
This is a quite powerful result, because if we have a radius $\theta$ that provides a  desirable confidence level $(1-\beta)$ in the single-stage guarantee, we can use the same radius to achieve the same level of confidence in the multi-stage guarantee with no additional requirement.
\end{remark}

\section{Wasserstein Penalty Problem}
\label{sec:pen}

We now consider a slightly different version of the DR-control problem, which can be considered as a relaxation of \eqref{dr_opt} with a fixed penalty parameter $\lambda > 0$:
\[
\inf_{\pi \in\Pi} \sup_{\gamma \in \Gamma'} \;  \mathbb{E}^{\pi, \gamma} \bigg [
\sum_{t=0}^\infty \alpha^t (c (x_t, u_t) - \lambda W_p (\mu_t, \nu_N)^p )
  \mid x_0 = \bm{x} \bigg],  
\]
where the strategy space $\Gamma' := \{\gamma := (\gamma_0,  \gamma_{1}, \ldots) \:|$ $\gamma_t (\mathcal{P}(\mathcal{W}) | h_t^e) = 1 \; \forall h_t^e \in H_t^e\}$ of Player~II no longer depends on a Wasserstein ambiguity set.
Instead of using an explicit ambiguity set $\mathcal{D}$, Player II is penalized by $\lambda W_p(\mu_t, \nu_N)^p$, which can be interpreted as the cost of perturbing the empirical distribution~$\nu_N$.

\subsection{Dynamic Programming}

Under Assumption~\ref{ass_sc},
the Bellman operator ${T}'_\lambda: \mathbb{B}_{\xi}(\mathcal{X}) \to \mathbb{B}_{\xi}(\mathcal{X})$ of the Wasserstein penalty problem is defined by
\begin{equation} \nonumber
\begin{split}
&({T}'_\lambda v)(\bm{x}) := \inf_{\bm{u} \in \mathcal{U}(\bm{x})} \sup_{\bm{\mu} \in \mathcal{P}(\mathcal{W})} \mathbb{E}_{\bm{\mu}}\big [ c(\bm{x}, \bm{u}) - \lambda W_p(\bm{\mu}, \nu_N)^p + \alpha  v(f(\bm{x}, \bm{u}, w))  \big ].
\end{split}
\end{equation}
for all $\bm{x} \in \mathcal{X}$.
By using the strong duality result \cite[Theorem 1]{Gao2016},
we have the following equivalent form of $T_\lambda'$:

\begin{proposition}\label{prop:reform}
Suppose that the function $w \mapsto v(f(\bm{x}, \bm{u}, w))$ lies in $L^1 (\mathrm{d} \nu_N)$ for each $(\bm{x}, \bm{u}) \in \mathbb{K}$.
Then,
the Bellman operator ${T}'_\lambda$ can be expressed as
\begin{equation}\nonumber
\begin{split}
({T}'_\lambda v) (\bm{x}) &= \inf_{\bm{u} \in \mathcal{U}(\bm{x})} \bigg [ c (\bm{x}, \bm{u})   + \frac{1}{N} \sum_{i=1}^N \sup_{w' \in \mathcal{W}} [\alpha v (f(\bm{x}, \bm{u}, w')) - \lambda d(\hat{w}^{(i)}, w')^p ]  \bigg ]
\end{split}
\end{equation}
for all $\bm{x} \in \mathcal{X}$.
Furthermore, we have 
\begin{equation} \nonumber
\begin{split}
(T v)(\bm{x}) &= \inf_{\lambda\geq 0} \; [ (T'_\lambda v) (\bm{x}) + \lambda \theta^p] \quad \forall \bm{x} \in \mathcal{X}.
\end{split}
\end{equation}
\end{proposition}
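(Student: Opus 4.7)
The plan is to handle the two displayed identities in Proposition~\ref{prop:reform} separately, and in both cases to reduce them to the Kantorovich duality machinery already invoked in Proposition~\ref{prop:si}.

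For the first identity, I would fix $\bm{x} \in \mathcal{X}$ and $\bm{u} \in \mathcal{U}(\bm{x})$ and focus on the inner supremum defining $T'_\lambda$. Since $c(\bm{x}, \bm{u})$ does not depend on $w$, it factors out, leaving the penalty DRO subproblem
\begin{equation*}
\sup_{\bm{\mu} \in \mathcal{P}(\mathcal{W})} \bigl[ \alpha \, \mathbb{E}_{\bm{\mu}}[ v(f(\bm{x}, \bm{u}, w)) ] - \lambda W_p(\bm{\mu}, \nu_N)^p \bigr].
\end{equation*}
This is precisely the Wasserstein-penalty problem to which Theorem~1 of \cite{Gao2016} applies: strong duality turns the supremum over an infinite-dimensional measure variable into
\begin{equation*}
\frac{1}{N} \sum_{i=1}^N \sup_{w' \in \mathcal{W}} \bigl[ \alpha \, v(f(\bm{x}, \bm{u}, w')) - \lambda \, d(\hat{w}^{(i)}, w')^p \bigr],
\end{equation*}
where the decomposition into per-atom suprema comes from the fact that $\nu_N$ is supported on $\{\hat{w}^{(1)}, \dots, \hat{w}^{(N)}\}$ with uniform weights, so the optimal transport plan moves mass independently out of each atom. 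The $L^1(\mathrm{d}\nu_N)$ hypothesis guarantees that the primal and dual objectives are well-defined and finite, which is what is needed to invoke \cite[Theorem~1]{Gao2016} without duality gap. Adding back $c(\bm{x}, \bm{u})$ and taking the infimum over $\bm{u} \in \mathcal{U}(\bm{x})$ yields the claimed formula for $T'_\lambda$.

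For the second identity, I would start from the semi-infinite program in Proposition~\ref{prop:si}. The constraint $\alpha v(f(\bm{x}, \bm{u}, w)) - \lambda d(w, \hat{w}^{(i)})^p \leq \ell_i$ for all $w \in \mathcal{W}$ says exactly that $\ell_i \geq \sup_{w \in \mathcal{W}}[\alpha v(f(\bm{x}, \bm{u}, w)) - \lambda d(w, \hat{w}^{(i)})^p]$, and since $\ell_i$ enters the objective with a positive coefficient, the infimum is attained by taking equality. Substituting these optimal slacks gives
\begin{equation*}
(Tv)(\bm{x}) = \inf_{\lambda \geq 0, \, \bm{u} \in \mathcal{U}(\bm{x})} \Bigl[ \lambda \theta^p + c(\bm{x}, \bm{u}) + \frac{1}{N} \sum_{i=1}^N \sup_{w \in \mathcal{W}} \bigl[ \alpha v(f(\bm{x}, \bm{u}, w)) - \lambda d(w, \hat{w}^{(i)})^p \bigr] \Bigr].
\end{equation*}
Swapping the order of the two infima (which is always legal since they act on independent variables) and recognizing the inner infimum over $\bm{u}$ as $(T'_\lambda v)(\bm{x}) - \lambda\theta^p \cdot 0$ — more precisely, as $(T'_\lambda v)(\bm{x})$ by the first part of the proposition — yields the identity $(Tv)(\bm{x}) = \inf_{\lambda \geq 0} [(T'_\lambda v)(\bm{x}) + \lambda \theta^p]$.

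The only genuinely delicate step is the appeal to \cite[Theorem~1]{Gao2016} in the first part: one must check that the integrability assumption on $w \mapsto v(f(\bm{x}, \bm{u}, w))$ against $\nu_N$, combined with lower semicontinuity of $v$ from Lemma~\ref{lem:contraction} and the Borel structure of $\mathcal{W}$, fits the hypotheses of that theorem. Everything else is essentially bookkeeping: substituting tight constraint values and interchanging two infima that commute trivially. Notably, no measurable selection argument is needed here, because we are reformulating pointwise in $\bm{x}$ rather than constructing an optimal $\bm{u}(\bm{x})$.
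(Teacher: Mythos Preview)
Your proposal is correct and follows the same approach as the paper, which simply invokes the strong duality result \cite[Theorem~1]{Gao2016} without further detail. Your elaboration for the second identity---eliminating the slacks $\ell_i$ in Proposition~\ref{prop:si} and commuting the two infima---is a sound and natural way to make that citation precise.
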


By the results of \cite{Gonzalez2003} in the general minimax control setting, 
the optimal value function $v'$ is the unique fixed point (in $\mathbb{B}_{lsc}(\mathcal{X})$)   of $T_\lambda'$ under Assumption~\ref{ass_sc} because $T_\lambda'$ is a contraction.
We can  use value iteration to evaluate $v'$ due to the Banach fixed point theorem.
Analogous to Theorem~\ref{thm:ds}, there exists 
a deterministic stationary policy $\pi'$, which is optimal, where
$\pi'(\bm{x}) \in \argmin_{\bm{u} \in \mathcal{U}(\bm{x})}    [c (\bm{x}, \bm{u})  +
\frac{1}{N} \sum_{i=1}^N \sup_{w' \in \mathcal{W}} [\alpha v' (f(\bm{x}, \bm{u}, w')) - \lambda d(\hat{w}^{(i)}, w')^p ]  
 ]$ for all $\bm{x} \in \mathcal{X}$, under Assumption~\ref{ass_sc}.
%
%
%

\subsection{Linear-Quadratic Problem}

We now develop a solution approach, using a Riccati-type equation, to linear-quadratic (LQ) problems with the Wasserstein penalty when
\[
d(w, w')^p := \| w- w'\|^2,
\]
where $\| \cdot \|$ denotes the Euclidean norm on $\mathbb{R}^l$.
Consider a linear system of the form
\begin{equation}\label{lin_sys}
x_{t+1} = Ax_t + B u_t + \Xi w_t,
\end{equation}
where $A \in \mathbb{R}^{n \times n}$, $B \in \mathbb{R}^{n \times m}$, and $\Xi \in \mathbb{R}^{n \times l}$.
We also choose the following
 quadratic stage-wise cost function:
\begin{equation}\label{quad_cost}
c(x_t, u_t)= x_t^\top Q x_t + u_t^\top R u_t,
\end{equation}
where $Q = Q^\top \in \mathbb{R}^{n\times n}$ is positive semidefinite, and $R = R^\top \in \mathbb{R}^{m \times m}$ is positive definite.
For the sake of simplicity, we assume that 
$\mathbb{E}_{w \sim \nu_N} [ w ] = \frac{1}{N} \sum_{i=1}^N \hat{w}^{(i)} = 0$.
The case of non-zero mean is considered in Appendix~\ref{app:riccati}.
Let $\Sigma := \mathbb{E}_{w \sim \nu_N} [ w w^\top ] = \frac{1}{N} \sum_{i=1}^N \hat{w}^{(i)} (\hat{w}^{(i)})^\top$.
In the LQ setting, we also set $\mathcal{X} := \mathbb{R}^n$, $\mathcal{U}(\bm{x}) \equiv \mathcal{U} := \mathbb{R}^m$, and $\mathcal{W} := \mathbb{R}^l$.
Note that, unlike the standard LQG, the LQ problems with Wasserstein penalty do not assume that the probability distribution of random disturbances is Gaussian. In fact, the main motivation of this distributionally robust LQ formulation is to relax the assumption of Gaussian disturbance distributions in LQG, and to obtain a useful control policy  when the true distribution deviates from a Gaussian distribution.

By using DP,
we obtain the following explicit solution of the LQ problem:
\begin{theorem}\label{thm:riccati}
Suppose that 
there exists a symmetric positive semidefinite matrix $P \in \mathbb{R}^{n\times n}$ that solves the following  equation:
\begin{equation}\label{re_DR}
P = Q + \alpha A^\top P A + \alpha^2 A^\top S A
\end{equation}
with 
\begin{equation}\nonumber
\begin{split} 
S  &:=  P \Xi (\lambda I  - \alpha \Xi^\top P \Xi)^{-1} \Xi^\top P\\
& - [  I + \alpha  \Xi (\lambda I - \alpha \Xi^\top P \Xi)^{-1} \Xi^\top P ]^\top PB
 \\
&\times [R + \alpha B^\top \{ P + \alpha  P \Xi (\lambda I - \alpha \Xi^\top P \Xi)^{-1} \Xi^\top P \} B  ]^{-1}\\
&\times B^\top P [  I + \alpha  \Xi (\lambda I - \alpha \Xi^\top P \Xi)^{-1} \Xi^\top P ]
\end{split}
\end{equation}
for a sufficiently large $\lambda$.
Then, ${v}' (\bm{x}) := \bm{x}^\top P \bm{x} + z$
 solves the Bellman equation, where $z :=  \frac{\lambda}{1-\alpha} \mbox{tr}[ \{ \lambda(\lambda I - \alpha \Xi^\top P \Xi)^{-1}  - I\} \Sigma]$.
If, in addition, $v'$ is the optimal value function,\footnote{Sufficient conditions  for   $v'$ to be the optimal value function are provided in \cite{Kim2020}. Under the stabilizability and observability conditions, the algebraic Riccati equation has a unique positive semidefinite solution as well. }
then the unique optimal  policy  ${\pi'}$  is given by
\[
{\pi'} (\bm{x}) = K \bm{x}\quad \forall \bm{x} \in \mathbb{R}^n,
\]
where  
\begin{equation} \nonumber
\begin{split}
K &:= - [R + \alpha B^\top \{ P + \alpha  P \Xi (\lambda I - \alpha \Xi^\top P \Xi)^{-1} \Xi^\top P \} B  ]^{-1}\\
& \times \alpha B^\top P^\top [  I + \alpha  \Xi (\lambda I - \alpha \Xi^\top P \Xi)^{-1} \Xi^\top P ] A.
\end{split}
\end{equation}
Furthermore, if we let
\[
{w}_{\bm{x}}'^{(i)} :=  (\lambda I - \alpha \Xi^\top P \Xi )^{-1}  [\alpha \Xi^\top P (A + BK)\bm{x} + \lambda \hat{w}^{(i)}],
\]
the deterministic stationary policy $\gamma' \in \Gamma'$, defined as
\[
\gamma' (\bm{x}) =  \frac{1}{N} \sum_{i=1}^N \delta_{{w}'^{(i)}_{\bm{x}}} \quad \forall \bm{x} \in \mathbb{R}^n,
\]
is an optimal policy for Player II that generates a worst-case distribution for each $\bm{x} \in \mathbb{R}^n$.
\end{theorem}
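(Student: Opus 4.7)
My plan is a guess-and-verify argument: I substitute the quadratic-plus-constant ansatz $v(\bm{x})=\bm{x}^{\top}P\bm{x}+z$ into the dual Bellman operator $T'_\lambda$ of Proposition~\ref{prop:reform} with $d(w,w')^{p}=\|w-w'\|^{2}$ and $f(\bm{x},\bm{u},w)=A\bm{x}+B\bm{u}+\Xi w$, and check that the identity $v'=T'_\lambda v'$ holds when $P$ satisfies the Riccati-type equation~\eqref{re_DR}. Since $T'_\lambda$ is a contraction on $\mathbb{B}_\xi(\mathcal{X})$ under Assumption~\ref{ass_sc}, the Banach fixed-point theorem will then force $v'$ to be the unique fixed point in the corresponding space; the optimal policy and worst-case distribution policy follow from reading off the inner maximizer and the outer minimizer.

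The first step is the inner supremum. Writing $\zeta:=A\bm{x}+B\bm{u}$ and $M:=\lambda I-\alpha\Xi^{\top}P\Xi$ (positive definite for large $\lambda$), the inner problem for each sample $\hat w^{(i)}$ is the strictly concave quadratic
\begin{equation*}
\sup_{w'}\bigl[\alpha(\zeta+\Xi w')^{\top}P(\zeta+\Xi w')-\lambda\|w'-\hat w^{(i)}\|^{2}+\alpha z\bigr].
\end{equation*}
Setting the gradient to zero gives the unique maximizer $w'^{(i)}_{\ast}=M^{-1}\bigl[\alpha\Xi^{\top}P\zeta+\lambda\hat w^{(i)}\bigr]$. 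Plugging this back in and expanding, the maximum equals $\alpha\zeta^{\top}P\zeta+\alpha z-\lambda\|\hat w^{(i)}\|^{2}+b^{(i)\top}M^{-1}b^{(i)}$ with $b^{(i)}=\alpha\Xi^{\top}P\zeta+\lambda\hat w^{(i)}$. Averaging over $i$ and using $\tfrac{1}{N}\sum_{i}\hat w^{(i)}=0$ and $\tfrac{1}{N}\sum_{i}\hat w^{(i)}\hat w^{(i)\top}=\Sigma$, the cross terms vanish and I obtain
\begin{equation*}
(T'_\lambda v)(\bm{x})=\inf_{\bm{u}}\bigl[\bm{x}^{\top}Q\bm{x}+\bm{u}^{\top}R\bm{u}+\alpha\zeta^{\top}\tilde P\zeta\bigr]+\alpha z+\lambda^{2}\mathrm{tr}[M^{-1}\Sigma]-\lambda\mathrm{tr}[\Sigma],
\end{equation*}
where $\tilde P:=P+\alpha P\Xi M^{-1}\Xi^{\top}P$.

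The second step is the outer infimum, which is now a standard LQR problem: the minimizer is $\bm{u}=-\alpha(R+\alpha B^{\top}\tilde P B)^{-1}B^{\top}\tilde P A\bm{x}$, matching the claimed $K$ once I observe that $\tilde P=[I+\alpha\Xi M^{-1}\Xi^{\top}P]^{\top}P$ by symmetry of $M$. Substituting the minimizing $\bm{u}$, the state-dependent term becomes $\bm{x}^{\top}[Q+\alpha A^{\top}\tilde P A-\alpha^{2}A^{\top}\tilde P B(R+\alpha B^{\top}\tilde P B)^{-1}B^{\top}\tilde P A]\bm{x}$, and a direct expansion using the same identity $\tilde P=P+\alpha P\Xi M^{-1}\Xi^{\top}P$ shows this bracketed matrix is precisely $Q+\alpha A^{\top}PA+\alpha^{2}A^{\top}SA$, which equals $P$ by hypothesis. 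Matching the constants then yields $z(1-\alpha)=\lambda\mathrm{tr}[(\lambda M^{-1}-I)\Sigma]$, giving the stated $z$, so $v'=T'_\lambda v'$.

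Finally, for the worst-case distribution policy I substitute the optimal $\bm{u}=K\bm{x}$ into the inner maximizer formula, obtaining $w'^{(i)}_{\bm{x}}=M^{-1}[\alpha\Xi^{\top}P(A+BK)\bm{x}+\lambda\hat w^{(i)}]$, and the equal-weight mixture $\tfrac{1}{N}\sum_i\delta_{w'^{(i)}_{\bm{x}}}$ attains the inner supremum at every stage; since $T'_\lambda$-greedy stationary policies are optimal (analog of Theorem~\ref{thm:ds} applied to Player II under Assumption~\ref{ass_sc}), this $\gamma'$ is an optimal distribution policy. The main obstacle is the bookkeeping in the second step: verifying that the matrix identity coming from completing the squares in $\bm{u}$ and expanding $\tilde P$ collapses to the precise form of $S$ in~\eqref{re_DR}; this is routine but error-prone, and hinges on exploiting the symmetry $M^{-\top}=M^{-1}$ to rewrite $\tilde P$ in the factored form $[I+\alpha\Xi M^{-1}\Xi^{\top}P]^{\top}P$ so that the cross-terms in the Riccati equation align with those in $S$.
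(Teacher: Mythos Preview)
Your approach is essentially identical to the paper's: the same guess-and-verify with the quadratic ansatz, the same inner maximization over $w'$ yielding $w^\star=M^{-1}[\alpha\Xi^\top P\zeta+\lambda\hat w^{(i)}]$, the same averaging exploiting $\mathbb{E}_{\nu_N}[w]=0$, and the same outer minimization over $\bm u$. Your use of the shorthand $\tilde P=P+\alpha P\Xi M^{-1}\Xi^\top P$ is a clean way to organize the algebra that the paper carries out longhand.

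One step deserves more care. When you assert that the equal-weight mixture $\gamma'(\bm x)=\tfrac{1}{N}\sum_i\delta_{w'^{(i)}_{\bm x}}$ ``attains the inner supremum,'' remember that in the primal form of $T'_\lambda$ the supremum is over $\bm\mu\in\mathcal P(\mathcal W)$ with the penalty $-\lambda W_2(\bm\mu,\nu_N)^2$; knowing the per-sample dual maximizers does not by itself tell you that this particular $\bm\mu$ achieves the primal supremum. The paper closes this by exhibiting the diagonal transport plan to get $W_2(\gamma'(\bm x),\nu_N)^2\le\tfrac{1}{N}\sum_i\|w'^{(i)}_{\bm x}-\hat w^{(i)}\|^2$, which combined with the dual value from Proposition~\ref{prop:reform} shows $\gamma'(\bm x)$ indeed attains the supremum. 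You should include this short transport-plan argument rather than invoke attainment by fiat.
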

Its proof is contained in Appendix~\ref{app:riccati}.
We first note that an optimal distributionally robust policy is \emph{linear} in the system state.
Furthermore, the control gain matrix $K$ is independent of the covariance matrix $\Sigma$ as in standard LQG. 
The worst-case distribution's support elements $w_{\bm{x}}'^{(i)}$'s are affine in the system state.
More specifically, $w_{\bm{x}}'^{(i)}$ is obtained by scaling the $i$th data sample $\hat{w}^{(i)} \in \mathbb{R}^l$ by the factor of $(\lambda I - \alpha \Xi^\top P \Xi)^{-1} \lambda$ and shifting it by the vector $(\lambda I - \alpha \Xi^\top P \Xi)^{-1} \alpha \Xi^\top P (A+BK) \bm{x}$, which is linear in the system state.
Distributional robustness is controlled by the penalty parameter $\lambda$: 
As $\lambda$ increases, the permissible deviation of $\mu_t$ from $\nu_N$ decreases. This is equivalent to decreasing the Wasserstein ball radius $\theta$ in the original DR-control setting.
Thus, by letting $\lambda$ tend to $+\infty$, the optimal distributionally robust policy for the LQ problem converges pointwise to the standard LQ optimal control policy.

\begin{proposition}
Suppose that $(A, B)$ is stabilizable and $(A, C)$ is observable, where $Q = C^\top C$.
Let $\bar{P}$ be the unique symmetric positive definite solution of 
the following discrete algebraic Riccati equation:
\begin{equation}\label{re}
\bar{P} = Q + \alpha A^\top \bar{P} A - \alpha^2 A^\top \bar{P} B (R + \alpha B^\top \bar{P} B)^{-1} B^\top \bar{P} A,
\end{equation}
and let
\[
\bar{K} := -\alpha ( R + \alpha B^\top \bar{P} B)^{-1} B^\top \bar{P} A.
\]
Then, for each $\bm{x} \in \mathcal{X}$
\begin{equation}
\begin{split}
\pi'(\bm{x}) &\to \bar{K} \bm{x} \\
w_{\bm{x}}' &\to \hat{w}_{\bm{x}}
\end{split}
\end{equation}
as $\lambda \to \infty$, where $\pi'$ and $w_{\bm{x}}'$ are defined in Theorem~\ref{thm:riccati}.
\end{proposition}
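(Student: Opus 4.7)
The plan is to show that both convergence statements follow from a careful asymptotic analysis of the Riccati-type equation \eqref{re_DR} and the explicit formulas for $K$ and $w_{\bm{x}}'^{(i)}$ as $\lambda \to \infty$. The key observation is that
\[
(\lambda I - \alpha \Xi^\top P \Xi)^{-1} \;=\; \tfrac{1}{\lambda}\bigl(I - \tfrac{\alpha}{\lambda}\Xi^\top P \Xi\bigr)^{-1} \;=\; O(1/\lambda),
\]
so that any term multiplied by this inverse (without a compensating $\lambda$) vanishes in the limit.

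The first step is to pass to the limit in \eqref{re_DR}. Using the expansion above, I would show that $\alpha \Xi(\lambda I - \alpha\Xi^\top P\Xi)^{-1}\Xi^\top P \to 0$ and $\alpha P\Xi(\lambda I - \alpha\Xi^\top P\Xi)^{-1}\Xi^\top P \to 0$, so the matrix $S$ collapses to $-PB(R+\alpha B^\top P B)^{-1}B^\top P$. Consequently, the equation \eqref{re_DR} reduces (formally) to the standard discrete algebraic Riccati equation \eqref{re}. The second step is to upgrade this formal limit to an actual convergence $P_\lambda \to \bar{P}$ of the (positive semidefinite) solutions. Under the stabilizability of $(A,B)$ and observability of $(A,C)$, the standard DARE \eqref{re} has a \emph{unique} symmetric positive definite solution $\bar{P}$. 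I would argue that the family $\{P_\lambda\}_{\lambda \geq \lambda_0}$ is uniformly bounded (for instance, by comparing the value function $\bm{x}^\top P_\lambda \bm{x}+z_\lambda$ against a fixed suboptimal stabilizing linear policy with $\lambda = +\infty$ disturbance-free model), extract a convergent subsequence, and observe that any limit point must satisfy \eqref{re} and be positive semidefinite, hence must equal $\bar{P}$. This yields $P_\lambda \to \bar{P}$.

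The third step is to substitute $P_\lambda \to \bar{P}$ into the closed-form expression for $K$ from Theorem~\ref{thm:riccati}. Since $\alpha \Xi(\lambda I - \alpha\Xi^\top P_\lambda \Xi)^{-1}\Xi^\top P_\lambda \to 0$, the bracketed factor $[I + \alpha\Xi(\cdots)\Xi^\top P_\lambda]$ converges to $I$, and the inner matrix $R + \alpha B^\top\{P_\lambda + \alpha P_\lambda\Xi(\cdots)\Xi^\top P_\lambda\}B$ converges to $R + \alpha B^\top \bar{P} B$. Combining these, $K \to -\alpha(R+\alpha B^\top \bar{P} B)^{-1}B^\top \bar{P} A = \bar{K}$, which by continuity gives $\pi'(\bm{x}) = K\bm{x} \to \bar{K}\bm{x}$ pointwise. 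For the worst-case distribution atoms, I would rewrite
\[
w_{\bm{x}}'^{(i)} \;=\; \bigl(I - \tfrac{\alpha}{\lambda}\Xi^\top P_\lambda \Xi\bigr)^{-1}\!\Bigl[\tfrac{\alpha}{\lambda}\Xi^\top P_\lambda (A+BK)\bm{x} + \hat{w}^{(i)}\Bigr],
\]
and observe that the prefactor tends to $I$ while the inside bracket tends to $\hat{w}^{(i)}$ because $P_\lambda$ and $K$ remain bounded. Hence $w_{\bm{x}}'^{(i)} \to \hat{w}^{(i)}$, which gives the stated convergence of the worst-case policy to the nominal empirical distribution.

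The main obstacle I anticipate is the second step: lifting the formal limit of \eqref{re_DR} to genuine convergence of the solutions $P_\lambda$. The Riccati-type equation \eqref{re_DR} is nonlinear in $P$ and may admit multiple positive semidefinite roots for finite $\lambda$ (Theorem~\ref{thm:riccati} only guarantees existence, not uniqueness), so one must identify the ``correct'' branch -- presumably the one corresponding to the optimal value function -- and then establish its continuity in $\lambda$. A clean way is to invoke the underlying variational interpretation: $\bm{x}^\top P_\lambda \bm{x} + z_\lambda$ is the optimal value of the penalty problem, which is monotone nonincreasing in $\lambda$ (larger $\lambda$ gives the adversary less freedom) and bounded below by the LQG optimum $\bm{x}^\top \bar{P}\bm{x}$; monotone convergence then pins down a unique limit, and a compactness argument on the sequence of Riccati solutions combined with uniqueness of the stabilizing root of \eqref{re} identifies the limit as $\bar{P}$.
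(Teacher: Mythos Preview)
Your proposal is correct and follows essentially the same route as the paper: show that as $\lambda \to \infty$ the modified Riccati equation \eqref{re_DR} collapses to the standard DARE \eqref{re}, invoke uniqueness of the positive (semi)definite solution under the stabilizability/observability hypotheses to conclude $P_\lambda \to \bar{P}$, and then read off the limits of $K$ and $w_{\bm{x}}'^{(i)}$ from their closed-form expressions. In fact, your compactness-plus-uniqueness argument for upgrading the formal limit of the equation to genuine convergence $P_\lambda \to \bar{P}$ is more careful than the paper's own proof, which essentially asserts this step without the boundedness/subsequence justification you supply.
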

\begin{proof}
Let $P_\lambda$ denote a symmetric positive semidefinite solution of  \eqref{re_DR} given any fixed $\lambda \geq \bar{\lambda}$.
As $\lambda$ tends to $+\infty$, the right-hand side of \eqref{re_DR} tends to $Q + \alpha A^\top P_\lambda A - \alpha^2 A^\top P_\lambda B (R + \alpha B^\top P_\lambda B)^{-1} B^\top P_\lambda A$, which corresponds to the right-hand side of \eqref{re} with $\bar{P} = P_\lambda$.
Therefore, $P_\lambda$ solves the algebraic Riccati equation~\eqref{re} as $\lambda \to \infty$.
On the other hand, \eqref{re} admits a unique positive definite solution when $(A, C)$ is observable and $(A, B)$ is stabilizable~(e.g., \cite[Section~2.4]{Lewis2012}).
Thus, $P_\lambda$ converges to $\bar{P}$ as $\lambda \to \infty$.
Likewise, we can show that the feedback gain matrix $K$ and the worst-case distribution's support element $w_{\bm{x}}'^{(i)}$ (defined in Theorem~\ref{thm:riccati}) tend to $\bar{K}$ and $\hat{w}^{(i)}$, respectively, as $\lambda \to \infty$.
Therefore, the result follows.
\end{proof}

\section{Numerical Experiments} \label{sec:exp}

\subsection{Investment-Consumption Problem}

We first demonstrate the performance and utility of DR-control through an investment-consumption problem~(e.g., \cite{Samuelson1969, Hakansson1970}).
Let $x_t$ be the wealth of an investor at stage $t$. 
The investor wishes to decide the amount $u_{1,t}$ to be invested in a risky asset (with an i.i.d. random rate of return, $w_t$) and the amount $u_{2,t}$ to be consumed 
at stage $t$.
The remaining amount $(x_t - u_{1,t} - u_{2,t})$ is automatically re-invested into a riskless asset with a deterministic rate of return, $\eta$.  
Then, the investor's wealth evolves as
\[
x_{t+1} = \eta (x_t - u_{1,t} - u_{2,t}) + w_t  u_{1,t}.
\]
We assume that the control actions $u_{1,t}$ and $u_{2,t}$ satisfy
the following constraints:
\[
u_{1,t} + u_{2,t} \leq x_t, \quad u_{1,t}, u_{2,t} \geq 0 \quad \forall t, 
\]
i.e., $\mathcal{U}(\bm{x}) := \{ \bm{u} := (\bm{u}_1, \bm{u}_2) \in \mathbb{R}^2 \mid  \bm{u}_1 + \bm{u}_2 \leq \bm{x}, \bm{u} \geq 0\}$.

The cost function is given by the following negative expected utility from consumption:
\[
J(\pi, \gamma) := - \mathbb{E}^{\pi, \gamma} \bigg [
\sum_{t=0}^\infty \alpha^t U(u_{2, t})
\bigg ],
\]
where the utility function $U: \mathbb{R} \to \mathbb{R}$ is selected as $U(c) = c - \zeta c^2$.
%
The following parameters are used in the numerical simulations: $\zeta = 0.25$, $\alpha = 0.9$, $\eta = 1.02$, and $p = 1$.
The data samples $\{ \hat{w}^{(1)}, \ldots, \hat{w}^{(N)} \}$ of $w_t$  are generated according to the normal distribution $\mathcal{N}(1.08, 0.1^2)$.
We numerically approximate the optimal value function $v^\star_{\hat{w}}$ and the corresponding optimal policy $\pi^\star_{\hat{w}}$ on a computational grid 
by using the convex optimization approach in~\cite{Yang}.
This method
approximates the Bellman operator   by the optimal value of a convex program with a uniform convergence property. 
Furthermore, it does not require any explicit interpolation in evaluating the value function and control policies at some state other than the grid points, by using an auxiliary optimization variable to assign the contribution of each grid point to the next state.

The numerical experiments were conducted on a Mac with 4.2 GHz Intel Core i7 and 64GB RAM. The amount of time required for simulations with different grid sizes and $N=10$ are reported in TABLE~\ref{tab:0}.
For the rest of the simulations, we used 71 states (with grid spacing 0.02).

\begin{table}[tb]
\small
\centering
\caption{{Computation time (in seconds) for the investment-consumption problem with different grid sizes}\label{tab:0}}
\begin{tabular}{l*{10}{c}}
\# of states       & 36 & 71 & 141 & 281 \\
\hline \hline
Time (sec)  & 288. 69 & 854.61 & 2086.15 & 9350.04    \\
 \hline
\end{tabular}
\end{table}

%

\begin{figure}[tb]
\begin{center}
\includegraphics[width=5.5in]{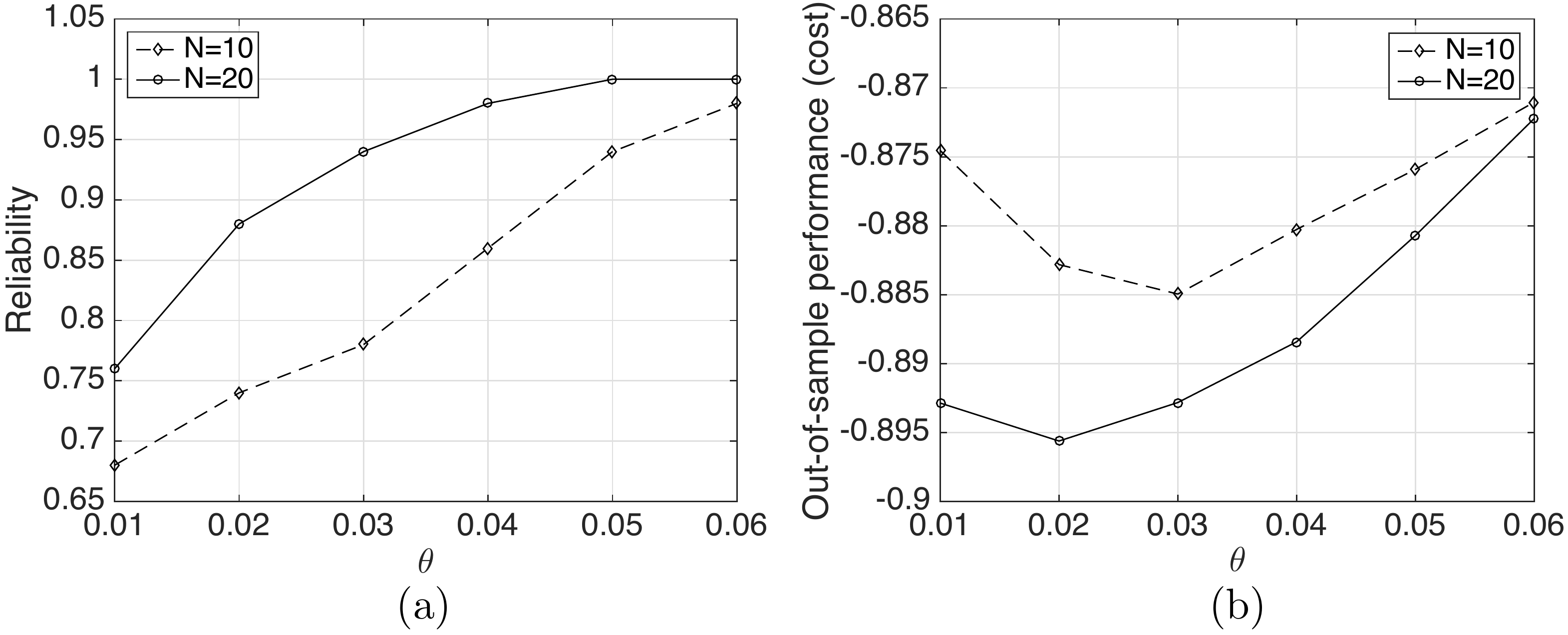}   
\caption{Depending on the radius $\theta$ and the number of samples $N$,
(a) the reliability $\mu^N \{ \hat{w} | \mathbb{E}^{\pi_{\hat{w}}^\star}_{w_t \sim \mu}  [ \sum_{t=0}^\infty \alpha^t r(x_t, u_t)|   x_0 = \bm{x}  ]  \leq v_{\hat{w}}^\star (\bm{x}) \}$, and (b)
the out-of-sample performance (cost) of $\pi_{\hat{w}}^\star$.}  
\label{fig:investment}    
\end{center}           
\end{figure}

\subsubsection{Out-of-sample performance guarantee}

To demonstrate the out-of-sample performance guarantee of an optimal distributionally robust policy, 
we compute the following \emph{reliability} of $\pi_{\hat{w}}^\star$:
\[
\mu^N \bigg \{ \hat{w} \mid \mathbb{E}^{\pi_{\hat{w}}^\star}_{w_t \sim \mu}  \bigg [ \sum_{t=0}^\infty \alpha^t c (x_t, u_t) \mid  x_0 = \bm{x}  \bigg ]  \leq v_{\hat{w}}^\star (\bm{x}) \bigg \},
\]
which represents the probability that the expected cost incurred by  $\pi_{\hat{w}}^\star$ under the true distribution $\mu$ is no greater than  $v_{\hat{w}}^\star (\bm{x})$. 
As shown in Fig. \ref{fig:investment} (a), the reliability increases with the Wasserstein ball radius $\theta$ and the number $N$ of samples. 
This result is consistent with  Theorem~\ref{thm:osg}. 
Our numerical experiments also confirm that the same radius $\theta$ can be used to achieve the same level of reliability in both single-stage and multi-stage settings as indicated in the theorem.

Fig. \ref{fig:investment} (b) illustrates the  out-of-sample cost~\eqref{osp} of $\pi_{\hat{w}}^\star$ with respect to $\theta$ and $N$.
Interestingly, the out-of-sample cost does not monotonically decrease with $\theta$.\footnote{This observation is consistent with the single-stage case in Section 7.2  of \cite{Esfahani2015}.}
For a too-small radius, the resulting DR-policy is not sufficiently robust to obtain the best out-of-sample performance (i.e., the least out-of-sample cost).
On the other hand, if a too-large Wasserstein ambiguity set is selected, 
the resulting DR-policy is overly conservative and thus sacrifices the closed-loop performance. 
Thus, there exists an optimal radius (e.g., $0.02$ in the case of $N = 20$) that provides the best out-of-sample performance.

\subsubsection{Comparison to SAA}
To compare DR-control~\eqref{dr_opt} with SAA-control~\eqref{saa_opt},
we first compute the out-of-sample performance of $\pi_{\hat{w}}^\star$ and that of the corresponding optimal SAA policy $\pi_{\hat{w}}^{\tiny \mbox{SAA}}$ obtained by using the same training dataset $\hat{w}$.
The  radius is selected as the one that provides the best out-of-sample performance.
As shown in Fig.~\ref{fig:investment_SAA}, the proposed DR-policy achieves   8\% lower out-of-sample cost than the SAA-policy when $N = 10$. 
As expected, the gap between the two decreases with the number of samples.
Note that the proposed DR-policy designed  even  with a small number of samples ($N=10$)
maintains its performance under the test dataset that is generated independent of the training dataset,
unlike the corresponding SAA-policy.

\begin{figure}[tb]
\begin{center}
\includegraphics[width=2.7in]{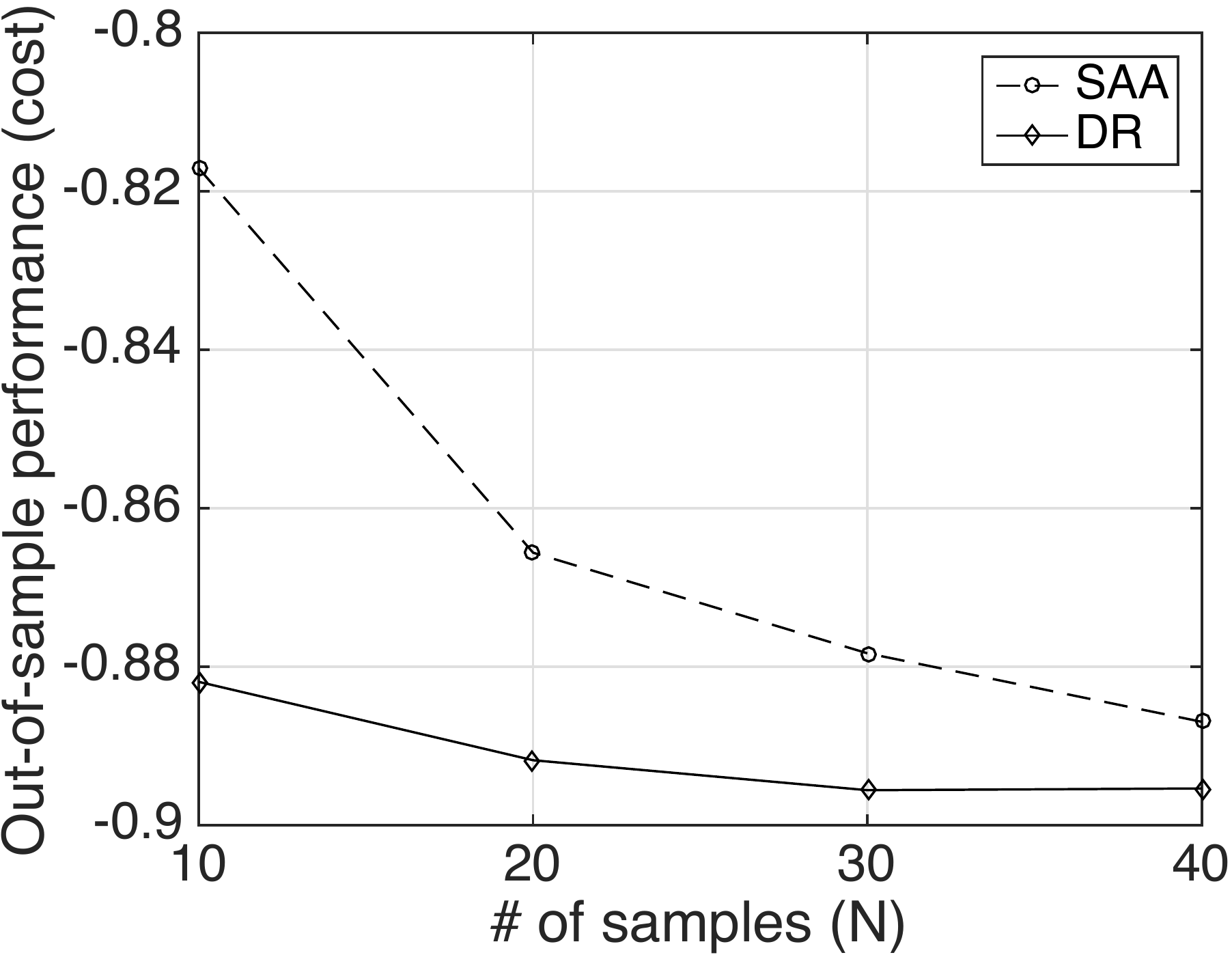}   
\caption{The out-of-sample performance (cost) of the optimal SAA policy $\pi_{\hat{w}}^{\tiny \mbox{SAA}}$ ($\circ$) and the optimal distributionally robust policy $\pi_{\hat{w}}^\star$ ($\diamond$) depending on $N$.}  
\label{fig:investment_SAA}    
\end{center}           
\end{figure}

%

\subsection{Power System Frequency Control Problem} 

Consider an electric power transmission system with $N$ buses (and $\bar{n}$ generator buses).
This system may be subject to ambiguous uncertainty generated from variable renewable energy sources such as wind and solar.
For the frequency regulation of this  system, 
we use the proposed Wasserstein penalty method to control
the mechanical power input of generator.
Let $\bm{\theta}_i$ and $P_{e, i}$ be the voltage angle (in radian) and the mechanical power input (in per unit), respectively, at generator bus $i$.
The swing equation of this system is then given by
\begin{equation}\label{swing}
M_i \ddot{\bm{\theta}}_i (t) + D_i \dot{\bm{\theta}}_i (t) = P_{m,i} (t) - P_{e, i} (t) \quad \forall i = 1, \ldots, \bar{n},
\end{equation}
where $M_i$ and $D_i$ denote the inertia coefficient (in pu$\cdot$sec$^2$/rad) and the damping coefficient (in pu$\cdot$sec/rad) of the generator at bus $i$.
Here, $P_{e,i}$ is the electrical active power injection (in per unit) at bus $i$ and is given by
$P_{e, i} := \sum_{j=1}^N | V_i| |V_j |  (G_{ij} \cos ({\bm \theta}_i - {\bm \theta}_j) + B_{ij} \sin ( {\bm \theta}_i - {\bm \theta}_j) )$,
where $G_{ij}$ and $B_{ij}$ are the conductance and susceptance of the transmission line connecting buses $i$ and $j$, respectively, and $V_i$ is the voltage at bus $i$.
Assuming that all the voltage magnitudes are $1$ per unit, the angle differences $|\bm{\theta}_i - \bm{\theta}_j|$'s are small, and  
all the transmission lines are (almost) lossless,
the AC power flow equation can be approximated by the following linearized DC power flow equation:
\begin{equation}\label{dc}
P_{e, i} := \sum_{j=1}^N B_{ij} ({\bm \theta}_i - {\bm \theta}_j) \quad \mbox{or} \quad P_{e} = L \bm{\theta},
\end{equation}
where $P_e := (P_{e,1}, \ldots, P_{e, \bar{n}})$,
$\bm{\theta} := (\bm{\theta}_{1}, \ldots, \bm{\theta}_{\bar{n}})$, and 
$L \in \mathbb{R}^{\bar{n} \times \bar{n}}$ is the Kron-reduced Laplacian matrix of this power network.\footnote{The Kron reduction is used to express the system in the reduced dimension $\bar{n}$ by focusing on the interactions of the generator buses~\cite{Bergen1999}. 
More precisely, we can obtain the Kron-reduced admittance matrix $Y^{\mbox{\tiny Kron}}$, by eliminating nongenerator bus $k$, as
$Y_{ij}^{\mbox{\tiny Kron}} := Y_{ij} - Y_{ik} Y_{kj} / Y_{kk}$ for all $i, j = 1, \ldots, N$ such that $i, j\neq k$.
The Kron-reduced Laplacian can then be obtained by setting
$L_{ii} := \sum_{k=1, \ldots, \bar{n} : k\neq i} B_{ik}^{\mbox{\tiny Kron}}$ and 
$L_{ij} := -B_{ij}^{\mbox{\tiny Kron}}$ for $i \neq j$ , where $B^{\mbox{\tiny Kron}}$ denotes the susceptance of the Kron-reduced admittance matrix~\cite{Dorfler2013}.
} 

Let $x(t) := ( \bm{\theta}(t)^\top, \dot{\bm{\theta}}(t)^\top )^\top \in \mathbb{R}^{2\bar{n}}$ and $u(t) := P_{m} (t) \in \mathbb{R}^{\bar{n}}$.
By combining \eqref{swing} and \eqref{dc}, we obtain the following state-space model of the power system~(e.g., \cite{Fazelnia2017}):
\[
\dot{x} (t)
= 
\begin{bmatrix}
0 & I \\
-M^{-1} L & M^{-1} D
\end{bmatrix}
x(t)
+
\begin{bmatrix}
0\\
M^{-1}
\end{bmatrix}
u(t),
\]
where $M := \mbox{diag}(M_1, \ldots, M_{\bar{n}})$ and
$D := \mbox{diag}(D_1, \ldots, D_{\bar{n}})$.
We discretize this system using zero-order hold on the input and a sampling time of $0.1$ seconds
to
obtain the matrices $A$ and $B$ of the following discrete-time system model~\eqref{lin_sys}:
\[
x_{t+1} = A x_t + B (u_t + w_t)
\]
where $w_{i,t}$ is the random disturbance (in per unit) at bus $i$ at stage $t$.
It can model uncertain power injections generated by solar or wind energy sources.

The state-dependent portion of the quadratic cost function \eqref{quad_cost} is chosen as
\[
x^\top Q x := 
\bm{\theta}^\top [I -  \bold{1} \bold{1}^\top/\bar{n} ] \bm{\theta} + \frac{1}{2} \dot{\bm\theta}^\top M \dot{\bm\theta},
\]
where $\bold{1}$ denotes the $\bar{n}$-dimensional vector of all ones, the first term measures the deviation of rotor angles from their average $\bar{\bm{\theta}} := \bold{1}^\top \bm{\theta}/\bar{n}$, and 
the second term corresponds to the kinetic energy stored in the electro-mechanical generators~\cite{Dorfler2014}.
The matrix $R$ is chosen to be the $\bar{n}$ by $\bar{n}$ identity matrix.

The IEEE 39-bus New England test case (with 10 generator buses, 29 load buses, and 40 transmission lines)  is used to demonstrate
the performance of the proposed LQ control $\pi_{\hat{w}}'$ with Wasserstein penalty.
The initial values of voltage  angles $\bm{\theta}(0)$ are determined by solving the (steady-state) power flow problem using MATPOWER~\cite{Zimmerman2011}.
The initial frequency is set to be zero for all buses except bus 1 at which $\dot{\bm{\theta}}_1 (0) := 0.1$ per unit.
We use $\alpha = 0.9$  in all simulations.

\begin{figure}[tb]
\begin{center}
\includegraphics[width=5.5in]{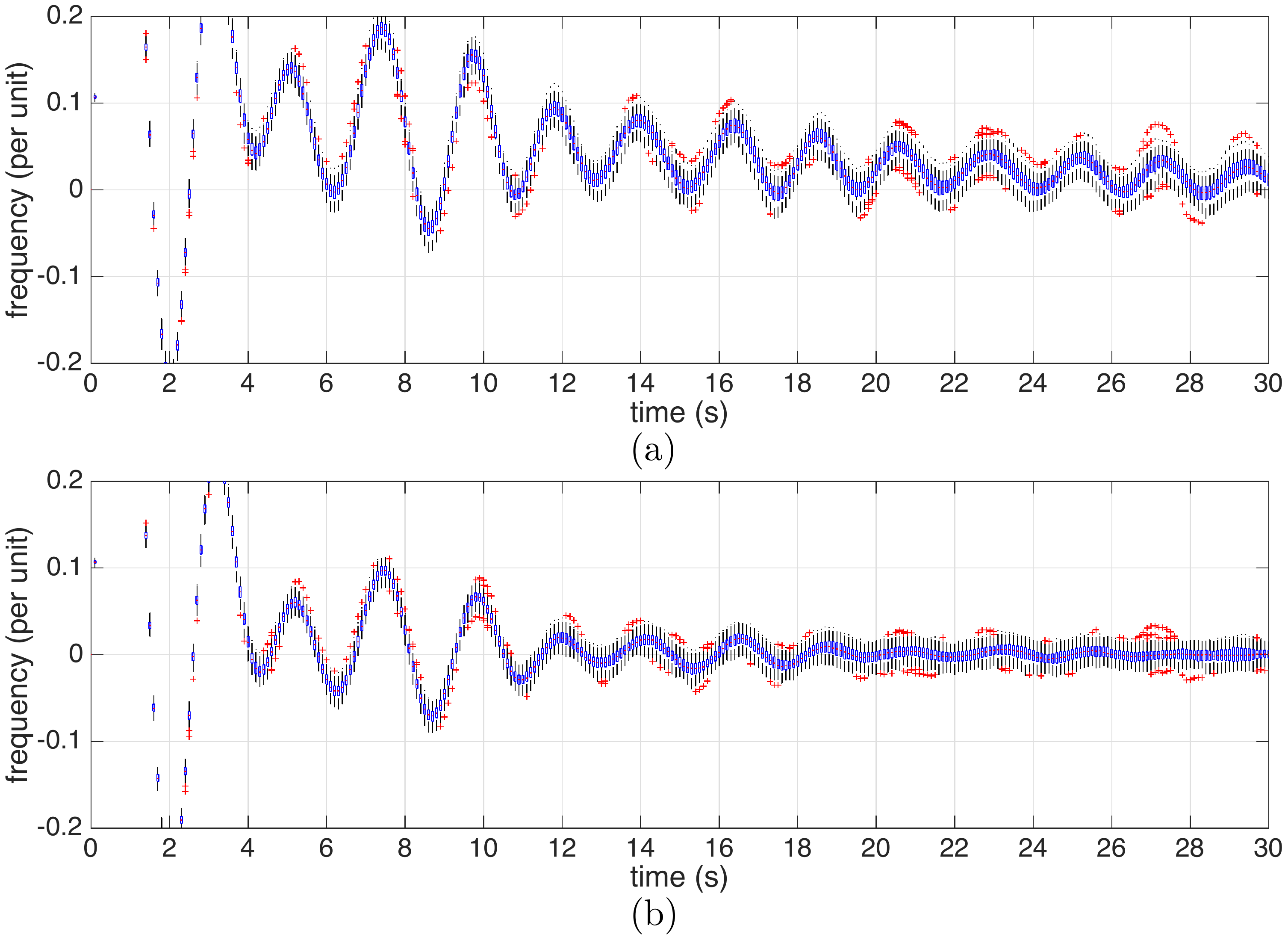}   
\caption{The box plot
of frequency deviation $\dot{\bm{\theta}}_{10}$ controlled by (a) the standard LQG control policy $\pi_{\hat{w}}^{\mathrm{\tiny LQG}}$, and (b) the optimal DR-control policy $\pi_{\hat{w}}'$ with Wasserstein penalty, under the worst-case distribution policy. }  
\label{fig:freq}    
\end{center}           
\end{figure}

\begin{table*}[tb]
\small
\centering
\caption{The amount of time (in seconds) required to decrease and maintain the mean frequency deviation less than $1\%$}\label{tab:1}
\begin{tabular}{l*{10}{c}}
Bus             & 1 & 2 & 3 & 4 & 5  & 6 & 7 &8 &9 &10\\
\hline \hline
$\pi^{\mathrm{\tiny LQG}}_{\hat{w}}$ & 73.5 & 70.3 & 59.3 & 21.5 & 21.5 & 24.2 & 21.3 & 62.5 & 36.5 &27.7   \\
$\pi'_{\hat{w}}$        & 25.0 & 24.2 & 19.8 & 12.4 &  12.3 & 11.6 & 12.2 & 20.8 & 14.3 &  14.3  \\
 \hline
\end{tabular}
\end{table*}

\subsubsection{Worst-case distribution policy}
We first compare the standard LQG control policy $\pi_{\hat{w}}^{\mathrm{\tiny LQG}}$ and
the proposed DR-control policy $\pi_{\hat{w}}'$ with the Wasserstein penalty under the worst-case distribution policy~$\gamma_{\hat{w}}'$ obtained by using the proof of  Theorem~\ref{thm:riccati}.
We set $N = 10$ and $\lambda = 0.03$.
The i.i.d. samples $\{\hat{w}^{(i)}\}_{i=1}^N$ are generated according to the normal distribution $\mathcal{N}(0, 0.1^2I)$.
As depicted in Fig.~\ref{fig:freq},\footnote{The central bar on each box indicates the median; the bottom and top edges of the box indicate the 25th and 75th percentiles, respectively; and the `+' symbol represents the outliers.} $\pi_{\hat{w}}'$ is less sensitive than $\pi_{\hat{w}}^{\mathrm{\tiny LQG}}$ against the worst-case distribution policy.\footnote{The frequency deviation at other buses displays a similar behavior.}
In the $[0, 24]$ (seconds) interval, the frequency controlled by $\pi_{\hat{w}}^{\mathrm{\tiny LQG}}$ fluctuates around non-zero values while $\pi_{\hat{w}}'$ 
maintains the frequency fluctuation centered approximately around zero.
This is because the proposed DR-method takes into account the possibility of nonzero-mean disturbances, while the standard LQG method assumes zero-mean disturbances.
Furthermore, the proposed DR-method suppress the frequency fluctuation much faster than the standard LQG method: Under $\pi_{\hat{w}}'$, the mean frequency deviation averaging across the buses is less than 1\% for any time after 16.7 seconds. 
On the other hand, if the standard LQG control is used, 
it takes 41.8 seconds to take 
the mean frequency deviation (averaging across the buses) below 1\%.
The detailed results for each bus are reported in Table~\ref{tab:1}.


\subsubsection{Out-of-sample performance guarantee}

We now examine the out-of-sample performance of $\pi_{\hat{w}}'$ 
and how it depends on the penalty parameter $\lambda$ and the number $N$ of samples.
The i.i.d. samples $\{\hat{w}^{(i)}\}_{i=1}^N$ are generated according to the normal distribution $\mathcal{N}(0, I)$.
Given $\lambda$ and $N$, we define the \emph{reliability} of $\pi_{\hat{w}}'$ as 
\[
\mu^N \bigg \{ \hat{w} \mid \mathbb{E}^{\pi_{\hat{w}}'}_{w_t \sim \mu}  \bigg [ \sum_{t=0}^\infty \alpha^t c (x_t, u_t) \mid  x_0 = \bm{x}  \bigg ]  \leq v_{\hat{w}}' (\bm{x}) \bigg \}.
\]
As shown in Fig.~\ref{fig:power_reliability},  
the reliability decreases with $\lambda$.
This is because when using larger $\lambda$,
 the control policy $\pi_{\hat{w}}'$ becomes less robust against the deviation of the empirical distribution from the true distribution.
Increasing $\lambda$ has the effect of decreasing the radius $\theta$ in DR-control.
In addition, the reliability tends to increase as the number $N$ of samples used to design $\pi_{\hat{w}}'$
increases.
This result is consistent with the dependency of the DR-control reliability on the number of samples.
By using this result, we can determine the penalty parameter to attain a desired out-of-sample performance guarantee (or reliability), given the number of samples.

\begin{figure}[tb]
\begin{center}
\includegraphics[width=2.7in]{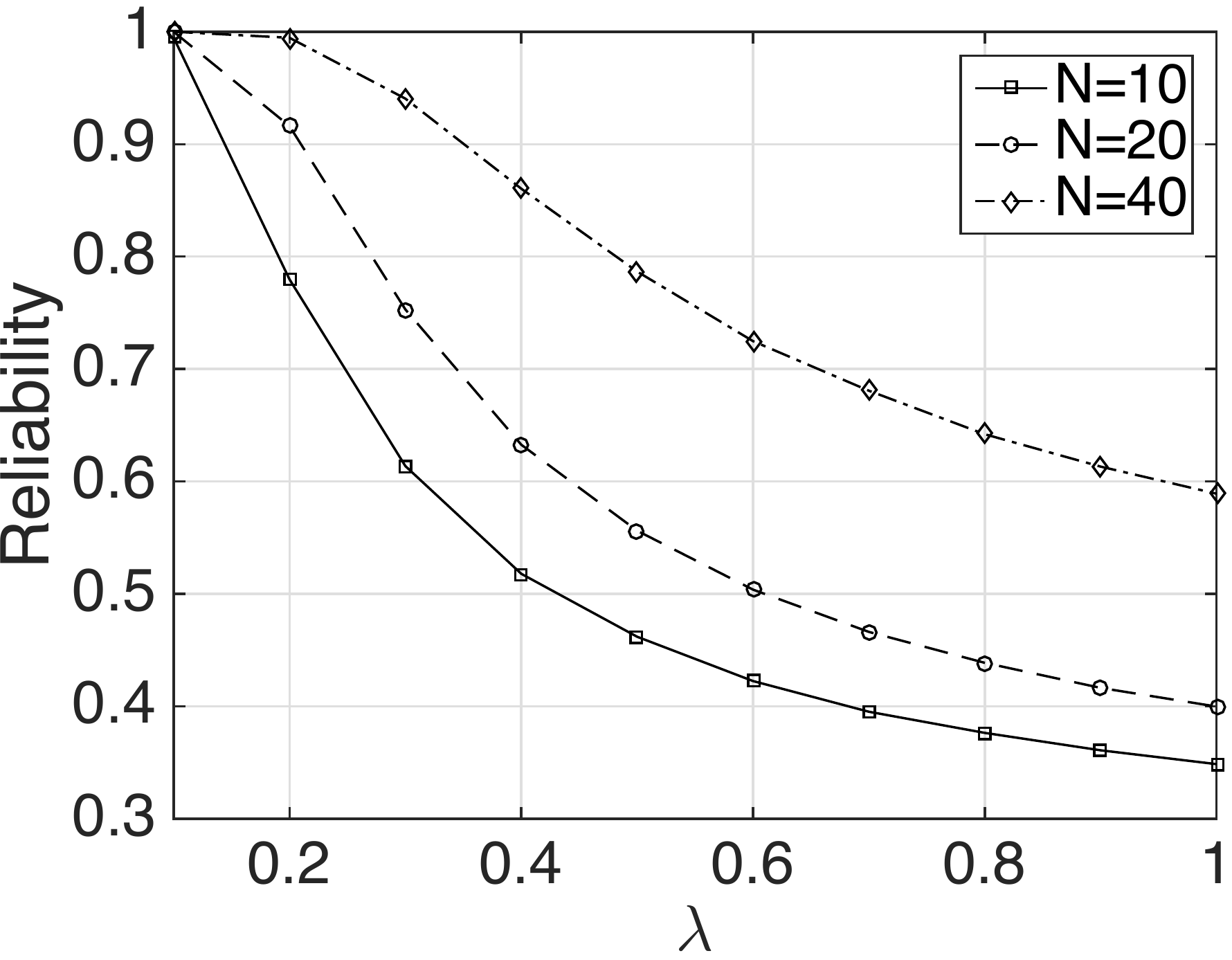}   
\caption{The reliability $\mu^N \{ \hat{w} \mid \mathbb{E}^{\pi_{\hat{w}}'}_{w_t \sim \mu}  [ \sum_{t=0}^\infty \alpha^t c (x_t, u_t)|   x_0 = \bm{x}  ]  \leq v_{\hat{w}}' (\bm{x}) \}$, in the Wasserstein penalty case, depending on $\lambda$ and $N$.}  
\label{fig:power_reliability}    
\end{center}           
\end{figure}

\section{Conclusions}

In this paper, we considered distributionally robust stochastic control problems with Wasserstein ambiguity sets by directly using the data samples of uncertain variables. 
We showed that the proposed framework has several salient features, including $(i)$ computational tractability with error bounds, $(ii)$ an out-of-sample performance guarantee, and $(iii)$ an explicit solution in the LQ setting.
It is worth emphasizing that the Kantorovich duality principle plays a critical role in our DP solution and analysis.
Furthermore, with regard to the out-of-sample performance guarantee, 
our analysis provides the unique insight that
the contraction property of the Bellman operators
extends a single-stage guarantee---obtained using a measure concentration inequality---to the corresponding multi-stage guarantee without any degradation in the confidence level.



\appendix
\section{Proof of Lemma~\ref{lem:ball}}\label{app:ball}

\begin{proof}
Recall that
using the Kantorovich duality principle, the Wasserstein distance between ${\mu}$ and $\nu$ can be written as
\[
W(\mu, \nu_N) = \sup_{\varphi, \psi \in \Phi} \bigg \{
\int_{\mathcal{W}} \varphi (w) \: \mathrm{d} \mu (w)  + 
\int_{\mathcal{W}} \psi (w') \: \mathrm{d} \nu_N (w') 
\bigg \},
\]
where $\Phi := \{ 
(\varphi, \psi) \in L^1 (\mathrm{d} \mu) \times L^1(\mathrm{d} \nu_N) \mid
\varphi (w) + \psi (w') \leq d( w, w')^p \; \forall w, w' \in \mathcal{W}
\}$.
Let 
\begin{equation} \nonumber
\begin{split}
&\hat{\mathcal{D}} := \bigg \{ {\mu} \in \mathcal{P} (\mathcal{W}) \bigg | 
\int_{\mathcal{W}} \varphi (w) \: \mathrm{d} {\mu} (w) \: + \int_{\mathcal{W}} \inf_{w \in \mathcal{W}} [ d ( w,  w')^p  - \varphi (w) ] \: \mathrm{d}\nu_N (w')  \leq \theta^p \:\: \forall \varphi \in L^1(\mathrm{d} {\mu})
\bigg \}.
\end{split}
\end{equation}
We claim that $\hat{\mathcal{D}} = \mathcal{D}$.
Choose an arbitrary ${\mu}$ from $\hat{\mathcal{D}}$.
Note that for any $(\varphi, \psi) \in \Phi_d$, 
\[
\psi (w') \leq \inf_{w \in \mathcal{W}} [ d( w, w')^p  -\varphi (w)] \quad  \forall w' \in \mathcal{W}.
\]
Thus, we have
\begin{equation} \nonumber
\begin{split}
W({\mu}, \nu_N) &\leq \sup_{\varphi \in L^1(\mathrm{d} {\mu})} \bigg \{
\int_{\mathcal{W}} \varphi(w) \: \mathrm{d}{ \mu}(w) + \int_{\mathcal{W}}\inf_{w \in \mathcal{W}} [  d( w, w' )^p  -\varphi (w)] \: \mathrm{d}\nu_N (w')
\bigg \} \leq \theta^p,
\end{split}
\end{equation}
where the last inequality holds becase ${\mu} \in \hat{\mathcal{D}}$. Therefore, ${\mu} \in \mathcal{D}$, which implies that $\hat{\mathcal{D}} \subseteq \mathcal{D}$.

We now select an arbitrary ${\mu}$ from $\mathcal{D}$.
Fix  $\varphi \in L^1(\mathrm{d}\bm{\mu})$ and
define a function $\hat{\psi}: \mathcal{W} \to \mathbb{R}$ by
\[
\hat{\psi} (w') := \inf_{w \in \mathcal{W}} [ d( w,  w')^p - \varphi (w)] \quad \forall  w' \in \mathcal{W}.
\]
Then, $\hat{\psi} \in L^1(\mathrm{d}{\mu})$ and $(\varphi, \hat{\psi}) \in \Phi$. Thus, 
\[
\int_{\mathcal{W}} \varphi(w) \: \mathrm{d} {\mu} (w) +
\int_{\mathcal{W}} \hat{\psi}(w') \: \mathrm{d} \nu_N (w')
\leq W ({\mu}, \nu_N) \leq \theta^p,
\]
which holds for any $\varphi \in L^1(\mathrm{d}{\mu})$.
By the definition of $\hat{\psi}$, this implies that
${\mu} \in \hat{\mathcal{D}}$. Therefore, $\mathcal{D} \subseteq \hat{\mathcal{D}}$.
\end{proof}

\section{Linear-Quadratic Problems}\label{app:riccati}

\begin{proof}[Proof of Theorem~\ref{thm:riccati}]
Let $v': \mathbb{R}^n \to \mathbb{R}$ be defined as
$v' (\bm{x}) := \bm{x}^\top P \bm{x} + z$.
To compute ${T}_\lambda' v'$,
we first calculate the inner maximization part  in Proposition~\ref{prop:reform} as
follows:
\begin{equation} \nonumber
\begin{split}
\phi (\bm{u}, w) &:= \sup_{w' \in \mathbb{R}^l} \big [\alpha v (f(\bm{x}, \bm{u}, w')) - \lambda d(w, w')^p \big ]\\
& = \sup_{w' \in \mathbb{R}^l} \big [  \alpha (A\bm{x} + B\bm{u} + \Xi w')^\top P (A\bm{x} + B\bm{u} + \Xi w')  + \alpha z - \lambda \| w - w'\|^2 \big ]. 
\end{split}
\end{equation}
There exists a constant $\bar{\lambda} > 0$ (depending on $P$) such that
for any $\lambda \geq \bar{\lambda}$, the objective function of the maximization problem above is strictly concave in $w'$ (i.e., $\lambda I - \alpha\Xi^\top P \Xi$ is positive definite), and thus the unique maximizer is given by
\begin{equation}\label{eq:wd}
w^\star := (\lambda I - \alpha \Xi^\top P \Xi )^{-1}  [\alpha \Xi^\top P (A\bm{x} + B\bm{u}) + \lambda w].
\end{equation}
With this maximizer, we can rewrite the term $\phi (\bm{u}, w)$ as
\begin{equation}\nonumber
\begin{split}
\phi (\bm{u}, w) &= \alpha [\bm{x}^\top  A^\top P A\bm{x} + \bm{u}^\top  B^\top P B  \bm{u} +2 \bm{x}^\top A^\top P B \bm{u} + z]\\
&+ [\alpha \Xi^\top P (A\bm{x} + B\bm{u}) + \lambda w]^\top (\lambda I - \alpha \Xi^\top P \Xi )^{-1} [\alpha \Xi^\top P (A\bm{x} + B\bm{u}) + \lambda w]-\lambda \|w\|^2.
\end{split}
\end{equation}
Since $\mathbb{E}_{w \sim \nu_N} [w ] = 0$ and $\mathbb{E}_{w \sim \nu_N} [w w^\top ] = \Sigma$, we have
\begin{equation} \nonumber
\begin{split}
\mathbb{E}_{w \sim \nu_N} [ \phi(\bm{u}, w) ] =
& \bm{u}^\top [\alpha B^\top P B + \alpha^2 B^\top P \Xi (\lambda I - \alpha \Xi^\top P \Xi)^{-1} \Xi^\top P B ]\bm{u}\\
&+ 2\alpha[ \bm{x}^\top A^\top + \alpha \bm{x}^\top A^\top P \Xi (\lambda I - \alpha \Xi^\top P \Xi)^{-1} \Xi^\top  ] P B \bm{u}\\
&+  \bm{x}^\top [ \alpha A^\top P A+ \alpha^2  A^\top P \Xi (\lambda I  - \alpha \Xi^\top P \Xi)^{-1} \Xi^\top P A] \bm{x}\\
&+\alpha z + \lambda^2 \mathrm{tr}[ (\lambda I - \alpha \Xi^\top P \Xi)^{-1}  \Sigma]
-\lambda \mathrm{tr}[\Sigma].
\end{split}
\end{equation}
Recall that 
\begin{equation}\label{obj}
(T' v')(\bm{x}) = \inf_{\bm{u} \in \mathcal{U}(\bm{x})} \big [ r(\bm{x}, \bm{u}) 
+ \mathbb{E}_{w \sim \nu_N} [ \phi(\bm{u}, w) ]  \big ].
\end{equation}
We notice 
$R + \alpha B^\top P B + \alpha^2 B^\top P \Xi (\lambda I - \alpha \Xi^\top P \Xi)^{-1} \Xi^\top P B$
is positive definite for $\lambda \geq \bar{\lambda}$ because $R$ is positive definite and $\lambda I - \alpha\Xi^\top P \Xi$ is positive definite for $\lambda \geq \bar{\lambda}$.
Thus, the objective function in \eqref{obj} is strictly convex  in $\bm{u}$ and has the unique minimizer
$\bm{u}^\star =  K\bm{x}$.  
Therefore, we obtain that
\begin{equation} \nonumber
\begin{split}
({T}' v')(\bm{x}) &=  \bm{x}^\top (Q + \alpha A^\top P A+ \alpha^2  A^\top S A )\bm{x}   +\alpha z + \lambda \mbox{tr}[ \{ \lambda(\lambda I - \alpha \Xi^\top P \Xi)^{-1}  - I\} \Sigma].
\end{split}
\end{equation}
We conclude that $v'$ solves the Bellman equation
since $P$ and $z$ satisfy 
$P = Q + \alpha A^\top P A + \alpha^2 A^\top S A$ and
$(1-\alpha) z =  \lambda \mbox{tr}[ \{ \lambda(\lambda I - \alpha \Xi^\top P \Xi)^{-1}  - I\} \Sigma]$.
Furthermore, when $v'$ is the optimal value function, 
the value of an optimal policy ${\pi}'$ at $\bm{x} \in \mathbb{R}^n$ is uniquely given by $\bm{u}^\star$, i.e., ${\pi}'(\bm{x}) = K \bm{x}$.

We now characterize the  worst-case distribution policy. Plugging $w = \hat{w}^{(i)}$ and $\bm{u} = K\bm{x}$ into \eqref{eq:wd}, we obtain that
\[
w_{\bm{x}}'^{(i)} = (\lambda I - \alpha \Xi^\top P \Xi )^{-1}  [\alpha \Xi^\top P (A\bm{x} + BK \bm{x}) + \lambda \hat{w}^{(i)}].
\]
Let $\gamma' (\bm{x}) :=  \frac{1}{N} \sum_{i=1}^N \delta_{{w}'^{(i)}_{\bm{x}}}$ for all $\bm{x} \in \mathcal{X}$.
Then, 
\begin{equation}\nonumber
\begin{split}
&W_2(\gamma' (\bm{x}), \nu_N)^2 \\
&= \min \Big \{\sum_{i, j=1}^N \kappa_{i,j} \| w'^{(i)}_{\bm{x}} - \hat{w}^{(j)} \|^2 \mid
\sum_{j=1}^N \kappa_{i,j} = \frac{1}{N}, i=1, \ldots, N, \;
\sum_{i=1}^N \kappa_{i,j} =  \frac{1}{N}, j=1, \ldots, N
\Big \}\\
&\leq \frac{1}{N} \sum_{i=1}^N  \| w'^{(i)}_{\bm{x}} - \hat{w}^{(i)} \|^2.
\end{split}
\end{equation}
Therefore, 
we have
\begin{equation} \nonumber
\begin{split}
&\mathbb{E}_{\gamma' (\bm{x})} \big [ c(\bm{x}, \bm{u}^\star) -\lambda W_2(\gamma' (\bm{x}), \nu_N)^2 - \alpha v (f(\bm{x}, \bm{u}^\star, w)) \big ] \\
&\geq c(\bm{x}, \bm{u}^\star) -\lambda \sum_{i=1}^N \| {w}'^{(i)}_{\bm{x}}- \hat{w}^{(i)}\|^2  -\frac{\alpha}{N} \sum_{i=1}^N v (f(\bm{x}, \bm{u}^\star, {w}'^{(i)}_{\bm{x}})) \\
&= c(\bm{x}, \bm{u}^\star)  + \frac{1}{N} \sum_{i=1}^N \sup_{w \in \mathbb{R}^l} \big [ \alpha v (f (\bm{x}, \bm{u}^\star, w)) - \lambda \| {w}- \hat{w}^{(i)}\|^2 \big ],
\end{split}
\end{equation}
where the last equality holds by the definition of $w'^{(i)}_{\bm{x}}$'s.
On the other hand, it follows from Proposition~\ref{prop:reform} that
\begin{equation}\nonumber
\begin{split}
&\sup_{\bm{\mu} \in \mathcal{P}(\mathbb{R}^l)}  \big [ c(\bm{x}, \bm{u}^\star) -\lambda W_2(\bm{\mu}, \nu_N)^2 - \alpha v (f(\bm{x}, \bm{u}^\star, w)) \big ] \\
&= c(\bm{x}, \bm{u}^\star)  + \frac{1}{N} \sum_{i=1}^N \sup_{w \in \mathbb{R}^l} \big [ \alpha v (f (\bm{x}, \bm{u}^\star, w)) - \lambda \| {w}- \hat{w}^{(i)}\|^2 \big ]. 
\end{split}
\end{equation}
Thus, we conclude that $\gamma' (\bm{x})$ is one of the worst-case distributions. 
\end{proof}

We now consider the case in which the data samples $\hat{w}^{(i)}$'s have non-zero mean, i.e.,
\[
\bar{w} := \mathbb{E}_{w \sim \nu_N} [ w] = \frac{1}{N} \sum_{i=1}^N \hat{w}^{(i)} \neq 0.
\]
The linear system~\eqref{lin_sys} can be rewritten as
\[
x_{t+1} = A x_t + B u_t  + \Xi w_t' + \Xi \bar{w},
\]
where $w_t' := w_t - \bar{w}$.
We now normalize the data samples $\hat{w}'^{(i)} := \hat{w}^{(i)} - \bar{w}$ for all $i \in \mathcal{I}$ so that 
\[
\frac{1}{N} \sum_{i=1}^N \hat{w}'^{(i)} = 0.
\]
Let $\bar{x} := (I-A)^{-1} \Xi \bar{w}$ assuming it is well-defined. Then,
\[
\begin{bmatrix}
x_{t+1} - \bar{x}\\
1
\end{bmatrix}
 = 
 \begin{bmatrix}
 A & 0\\
 0 & 1
 \end{bmatrix}
 \begin{bmatrix}
x_{t} - \bar{x}\\
1
\end{bmatrix}
+ 
\begin{bmatrix}
B \\
0
\end{bmatrix}
u_t
+ 
\begin{bmatrix}
\Xi \\
0
\end{bmatrix}
w_t'.
\]
By letting $x_t' := ((x_{t+1} - \bar{x})^\top, 1)^\top \in \mathbb{R}^{n+1}$, we can rewrite the system as
\[
x_{t+1}' = A' x_t + B' u_t + \Xi' w_t'.
\]
Define a positive semidefinite matrix $Q' \in \mathbb{R}^{(n+1)\times(n+1)}$ by
\[
Q' := \begin{bmatrix}
I & \bar{x}
\end{bmatrix}^\top  Q \begin{bmatrix}
I & \bar{x}
\end{bmatrix} =
\begin{bmatrix}
Q & Q\bar{x}\\
\bar{x}^\top Q & \bar{x}^\top  Q \bar{x}
\end{bmatrix}.
\]
We then have
\begin{equation} \nonumber
\begin{split}
x_t^\top Q x_t 
&= x_t'^\top Q' x_t'.
\end{split}
\end{equation}
Thus, the nonzero mean case is converted to the zero mean case with the normalized data $\hat{w}'^{(i)}$'s, the expanded state $x_t'$ and the new positive semidefinite matrix $Q'$ in the quadratic cost function.
Therefore, we can use Theorem~\ref{thm:riccati} to compute the DR-control gain matrix $K'$.
The corresponding optimal policy is obtained as
$\pi'(\bm{x}) := K' (
(\bm{x} - \bar{x})^\top,
1)^\top$ for all $\bm{x} \in \mathbb{R}^n$.

\bibliographystyle{IEEEtran}

\bibliography{reference}

\end{document}